\DeclareMathAlphabet{\mathpzc}{OT1}{pzc}{m}{it}
\newtheorem{theorem}{Theorem}[section]
\newtheorem{maintheorem}{Main Theorem}[section]
\newtheorem{lemma}[theorem]{Lemma}
\newtheorem{proposition}[theorem]{Proposition}
\newtheorem{corollary}[theorem]{Corollary}
\newtheorem{observation}[theorem]{Observation}
\newtheorem{fact}[theorem]{Fact}
\theoremstyle{definition}
\newtheorem{definition}[theorem]{Definition}
\theoremstyle{remark}
\newtheorem{remark}{Remark}
\newtheorem{question}{Question}
\def\hook{\upharpoonright}
\def\forces{\Vdash}
\def\Me{\mathcal M}
\def\Null{\mathcal N}
\def\ZFC{\mathsf{ZFC}}
\def\PFA{\mathsf{PFA}}
\def\MA{\mathsf{MA}}
\def\baire{\omega^\omega}
\def\cantor{2^\omega}
\def\mfb{\mathfrak b}
\def \mfd{\mathfrak{d}}
\def\GCH {\mathsf{GCH}}
\def\CH {\mathsf{CH}}
\def\Q{\mathbb Q}
\def\P{\mathbb P}
\def\cc{2^{\aleph_0}}
\def\mfa{\mathfrak{a}}
\def\st{\mathfrak{st}}
\def\non{{\rm non}}
\def\cov{{\rm cov}}
\def\mfs{\mathfrak{s}}
\begin{document}

\title{The Special Tree Number}

\author[Switzer]{Corey Bacal Switzer}
\address[C.~B.~Switzer]{Institut f\"{u}r Mathematik, Kurt G\"odel Research Center, Universit\"{a}t Wien, Kolingasse 14-16, 1090 Wien, AUSTRIA}
\email{corey.bacal.switzer@univie.ac.at}

\thanks{\emph{Acknowledgments:} The author would like to thank the
Austrian Science Fund (FWF) for the generous support through grant number Y1012-N35.}
\subjclass[2010]{03E17, 03E35, 03E50} 
\keywords{Aronszajn tree, special tree, cardinal characteristics, meager set}

\date{}

\maketitle

\begin{abstract}
Define {\em the special tree number}, denoted $\st$, to be the least size of a tree of height $\omega_1$ which is neither special nor has a cofinal branch. This cardinal had previously been studied in the context of fragments of $\MA$ but in this paper we look at its relation to other, more typical, cardinal characteristics. Classical facts imply that $\aleph_1 \leq \st \leq \cc$, under Martin's Axiom $\st = \cc$ and that $\st = \aleph_1$ is consistent with $\MA({\rm Knaster}) + \cc = \kappa$ for any regular $\kappa$ thus the value of $\st$ is not decided by $\ZFC$ and in fact can be strictly below essentially all well studied cardinal characteristics. We show that conversely it is consistent that $\st = \cc = \kappa$ for any $\kappa$ of uncountable cofinality while $\non(\Me) = \mfa = \mathfrak{s} = \mathfrak{g} = \aleph_1$. In particular $\st$ is independent of the lefthand side of Cicho\'{n}'s diagram, amongst other things. The proof involves an in depth study of the standard ccc forcing notion to specialize (wide) Aronszajn trees, which may be of independent interest.
\end{abstract}

\section{Introduction}
A tree $T$ of height $\omega_1$ is called {\em special} if it can be covered by countably many antichains, or, equivalently if it has a {\em specializing function} i.e. a function $f:T \to \omega$ which is injective on chains. In the context of forcing, special trees were first introduced by Baumgartner, Malitz and Reinhardt in \cite{BMR70} to show that $\MA$ actually implies a strengthening of the Souslin hypothesis and have since generated an enormous amount of research in set theory and its peripheries, see e.g. the survey article \cite{HonStej15}. Obviously a special tree cannot contain a cofinal branch and it is a natural question whether the converse is true. In the case that $T$ has countable levels (i.e. is an Aronszajn tree as usually defined) this is a well studied problem that is known to be independent of $\ZFC$. Specifically Baumgartner, Malitz and Reinhardt showed in \cite[Theorem 4]{BMR70} that $\MA + \neg \CH$ implies every tree of height $\omega_1$, cardinality less than $2^{\aleph_0}$ is special while a Souslin tree is a consistent counter example. 

For the most part research has focused on Aronszajn trees and, to a lesser extent {\em wide Aronszajn trees}: trees of height $\omega_1$ with levels of size $\aleph_1$ and no cofinal branch. However, several authors have also considered trees of height $\omega_1$ with no cofinal branch and no a priori assumption on the width of the tree, see e.g. \cite{Tod81}. The most notable case of this {\em Rado's Conjecture} (see \cite{Tod83}): which states that every tree of height $\omega_1$ is either special or contains a subtree of cardinality $\omega_1$ which is not special.

In this paper we look at this general case of trees of height $\omega_1$ from the point of view of cardinal characteristics. Define $\st$ to be the least size of a non-special tree of height $\omega_1$ with no cofinal branch. This cardinal was first\footnote{As far as the author can tell.} mentioned in \cite{whereMA} and studied in more depth in \cite{Morass, LandverTree}. While there are very few papers on $\st$ as a cardinal, a lot of basic information is essentially well known about it. For instance, it is well known that $\st \leq 2^{\aleph_0}$, i.e. $\ZFC$ proves there is a non special tree of size $2^{\aleph_0}$ with no cofinal branch and the aforementioned Baumgartner-Malitz-Reinhardt theorem can be reformulated as the statement that $\MA$ implies $\st = 2^{\aleph_0}$, which follows the heuristic that $\MA$ implies ``all cardinal characteristics are large". It is also essentially a known fact that $\st = \aleph_1$ is consistent with more or less all well-studied cardinal characteristics, in particular those appearing in \cite{BlassHB} and \cite{BarHB}, being arbitrarily large since there may be Souslin trees in a model of $\MA({\rm Knaster})$ with arbitrarily large continuum, see \cite{KenST}. To summarize:

\begin{fact}
\begin{enumerate}
\item
$\ZFC$ proves $\aleph_1 \leq \st\leq2^{\aleph_0}$ with both equalities consistent with the failure of $\CH$.
\item
$\MA$ implies $\st = \cc$.
\item
$\st = \aleph_1$ is consistent with $\MA({\rm Knaster}) + 2^{\aleph_0} = \kappa$ for any regular cardinal $\kappa$.
\end{enumerate}
\label{basicfacts}
\end{fact}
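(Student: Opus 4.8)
The three clauses are mostly an assembly of two external theorems, so the plan is to prove the trivial bound, isolate the one genuinely constructive point, and reduce everything else to the Baumgartner--Malitz--Reinhardt theorem and to \cite{KenST}. The lower bound $\aleph_1\le\st$ in (1) is immediate: a tree of height $\omega_1$ has at least one node on each of its $\omega_1$ levels, hence at least $\aleph_1$ nodes. For the upper bound $\st\le\cc$ I would exhibit an explicit $\ZFC$ witness, the tree $T$ of all strictly increasing functions $s\colon\alpha\to\mathbb{R}$ for $\alpha<\omega_1$ (equivalently the countable well-ordered subsets of $\mathbb{R}$), ordered by end-extension. It has height $\omega_1$ since every countable ordinal order-embeds into $\mathbb{R}$; it has size $\cc$ since each node is a countable subset of $\mathbb{R}$ and already level $\omega$ has $\cc$ nodes; and it has no cofinal branch, since such a branch would be a strictly increasing $\omega_1$-sequence of reals, impossible by separability of $\mathbb{R}$. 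The substantive step is that $T$ is not special, which I would establish (or cite from \cite{Morass, LandverTree}) using that a tree of height $\omega_1$ is special exactly when it carries a strictly increasing map into $\Q$: the uncountable width of $T$ together with the ``continuity'' of the supremum along limit branches is what obstructs any such $\Q$-valued map. Finally the two consistency statements in (1) require no separate work, as $\st=\cc>\aleph_1$ is delivered by (2) and $\st=\aleph_1<\cc$ by (3).

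For (2), if $\CH$ holds then $\cc=\aleph_1$ and the bounds of (1) already give $\st=\aleph_1=\cc$, so I may assume $\neg\CH$. Here I would invoke the cited Baumgartner--Malitz--Reinhardt theorem directly: under $\MA+\neg\CH$ every tree of height $\omega_1$ of size $<\cc$ with no cofinal branch is special. The mechanism, which I would recall, is that for such a $T$ the specialization poset $\mathbb{Q}_T$ of finite partial functions $p\colon T\to\omega$ that are injective on chains, ordered by reverse inclusion, is ccc; applying $\MA$ to the fewer-than-$\cc$ dense sets $D_t=\{p:t\in\mathrm{dom}\,p\}$ produces a total specializing function. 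Hence any non-special tree of height $\omega_1$ with no cofinal branch has size $\ge\cc$, i.e. $\st\ge\cc$, and with the upper bound from (1) we conclude $\st=\cc$.

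For (3) I would appeal to \cite{KenST}, which yields for every regular $\kappa$ a model of $\MA({\rm Knaster})+\cc=\kappa$ containing a Souslin tree $S$. Such an $S$ has height $\omega_1$, size $\aleph_1$, no cofinal branch, and only countable antichains; in particular it cannot be a countable union of antichains, since that would force $|S|\le\aleph_0$, so $S$ is not special. Thus $S$ witnesses $\st\le\aleph_1$, whence $\st=\aleph_1$ while $\cc=\kappa$ is arbitrarily large, which simultaneously supplies the ``$\st=\aleph_1$ with $\neg\CH$'' half of (1).

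I expect the genuinely hard inputs to be the two external ones and to lie outside the bookkeeping above: the ccc-ness of $\mathbb{Q}_T$ for \emph{wide} trees with no cofinal branch (the core of the Baumgartner--Malitz--Reinhardt argument, and precisely the forcing whose finer behaviour the rest of this paper analyzes), and the consistency of $\MA({\rm Knaster})$ with both a Souslin tree and large continuum from \cite{KenST}. Internal to the present argument, the one step I would check most carefully is the non-specialness of the witness in (1): in contrast to the Aronszajn case, where specialness is independent of $\ZFC$, here non-specialness must be a $\ZFC$ theorem and is forced by the tree's width rather than by any coloring obstruction available at $\aleph_1$.
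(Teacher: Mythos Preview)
Your arguments for (2), (3), and the lower bound in (1) are essentially the paper's. The difference lies in the witness for $\st\le\cc$. The paper uses $T(S)$, the tree of closed bounded subsets of a fixed stationary co-stationary $S\subseteq\omega_1$ ordered by end-extension; it has no cofinal branch since $\omega_1\setminus S$ is stationary, and non-specialness is a one-liner: $T(S)$ is well known to be Baire as a forcing notion, hence preserves $\omega_1$, whereas forcing with any special tree collapses $\omega_1$ (the specializing function injects the generic $\omega_1$-branch into $\omega$).

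Your tree of increasing real sequences is a legitimate alternative --- the paper itself points to Kurepa's construction as the historical original --- but the step you flag as delicate is a real gap. The sketch ``uncountable width together with continuity of the supremum obstructs a strictly increasing map into $\mathbb{Q}$'' is not an argument: width alone obstructs nothing (attach $\cc$ many new leaves to a special Aronszajn tree and it remains special), and the supremum map $s\mapsto\sup s$ is not strictly increasing on your tree (if $s$ has limit length and $a=\sup s\notin\mathrm{range}(s)$ then $s$ and $s^\frown\langle a\rangle$ have the same supremum), so it does not hand you an $\mathbb{R}$-embedding to play against. Moreover the paper's Baire route does not transfer: restricted to bounded sequences, as required for normality, the dense open sets $\{s:\sup s>n\}$ already have empty intersection, so your tree is not Baire. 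A correct proof of non-specialness for your tree exists but is not the two-line affair available for $T(S)$; you would either need to supply it or, more economically, switch to the paper's witness.
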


The history of $\st$ is as follows. In \cite{LandverTree}, following a suggestion from the anonymous referee of \cite{whereMA}, the possible values $\st$ were investigated and it was shown that $\st$ could consistently be any regular cardinal ${\leq}\cc$, see \cite[Theorem 2.8]{LandverTree}. Piggybacking off these results, Koszmider nearly completed the picture of the possible values of $\st$ in \cite{Morass} by proving that $\st$ could in fact be any cardinal, singular or regular, of uncountable cofinality\footnote{Note that $\st$ must have uncountable cofinality, see Proposition \ref{COFprop} below.} less than or equal to the cofinality of the continuum, see \cite[Theorem 47]{Morass}. Meanwhile $\st > {\rm cf}(\cc)$ is also consistent by a theorem of Laver as explained in the discussion of Theorem \ref{Laver} below, however much less is known about the value of $\st$ when $\st > {\rm cf}(\cc)$. See Question \ref{Qcof}. Laver's theorem also establishes the consistency of $\st > \non(\Null)$ and hence $\st > \cov(\Me)$, which we erroneously claimed was open in an earlier draft of this paper.

In this article we study more generally the possible behavior of $\st$, and in particular look at how $\st$ compares to other, more well studied cardinal characteristics. Our main theorem is the following.

\begin{maintheorem}
For any $\aleph_1 \leq \kappa \leq {\rm cf}(\mu) \leq \mu$ with $\kappa$ (and $\mu$) of uncountable cofinality it is consistent that $\st = \kappa$, $2^{\aleph_0} = \cov(\Me)=\mu$ and $\non(\Me) = \mfa = \mathfrak{s} =\mathfrak{g}= \aleph_1$.
\label{mainthm1}
\end{maintheorem}

Combining Main Theorem \ref{mainthm1} with Fact \ref{basicfacts} and Laver's Theorem \ref{Laver} discussed below, the following is immediate\footnote{See \cite{BlassHB} for the definitions of $\mathfrak{e}$ and $\mathfrak{p}$. The other cardinals will be defined later on in this section.}.

\begin{corollary}
$\st$ is independent of $\mfa$, $\mathfrak{s}$, $\mathfrak{g}$, $\mathfrak{p}$, $\mathfrak{e}$ and both the left hand side and bottom row of the Cicho\'{n} diagram.
\end{corollary}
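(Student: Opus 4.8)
The plan is to deduce independence invariant-by-invariant: for each $\mathfrak x$ in the list it suffices to produce one model of $\st<\mathfrak x$ and one of $\mathfrak x<\st$, since then neither inequality is a theorem of $\ZFC$. The direction $\st<\mathfrak x$ is completely uniform and is already available from Fact \ref{basicfacts}(3): in the model witnessing $\st=\aleph_1$ together with $\MA({\rm Knaster})+\cc=\kappa$ for a large regular $\kappa$, the fragment $\MA({\rm Knaster})$ forces every invariant of \cite{BlassHB} and of the Cicho\'n diagram up to $\cc$, so $\mfa=\mathfrak s=\mathfrak g=\mathfrak p=\mathfrak e=\kappa$ and likewise every invariant on the left-hand side and bottom row equals $\kappa>\aleph_1=\st$. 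A single model thus gives $\st<\mathfrak x$ for all the invariants at once.

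For the reverse inequality I would apply Main Theorem \ref{mainthm1} with $\mu=\kappa$ a regular cardinal ${\geq}\aleph_2$, giving $\st=\cc=\mu$ large while $\non(\Me)=\mfa=\mathfrak s=\mathfrak g=\aleph_1$. The remaining invariants named in the corollary are then pinned to $\aleph_1$ by the standard $\ZFC$ inequalities read off from these values: $\mfb\leq\non(\Me)$ gives $\mfb=\aleph_1$, whence $\add(\Me)\leq\mfb$ and $\add(\Null)\leq\add(\Me)$ give $\add(\Me)=\add(\Null)=\aleph_1$; $\cov(\Null)\leq\non(\Me)$ gives $\cov(\Null)=\aleph_1$; $\mathfrak p\leq\mfb$ gives $\mathfrak p=\aleph_1$; and $\mathfrak e\leq\non(\Me)$ (the functions predicted by a fixed predictor form a meager set, so no predictor predicts an entire non-meager family) gives $\mathfrak e=\aleph_1$. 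Hence in this one model each of $\mfa,\mathfrak s,\mathfrak g,\mathfrak p,\mathfrak e,\add(\Null),\cov(\Null),\add(\Me),\mfb$ lies strictly below $\st=\mu$.

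The only invariants this does not cover are $\cov(\Me)$ and $\non(\Null)$, which are in fact large in the Main Theorem model: by hypothesis $\cc=\cov(\Me)=\mu$, and $\cov(\Me)\leq\non(\Null)\leq\cc$ then forces $\non(\Null)=\mu=\st$, so Main Theorem \ref{mainthm1} cannot separate $\st$ from either. This is precisely where Theorem \ref{Laver} is used: Laver's model realizes $\st>\non(\Null)$, and since $\cov(\Me)\leq\non(\Null)$ in $\ZFC$ it realizes $\st>\cov(\Me)$ as well, supplying the missing instances of $\mathfrak x<\st$ for these two corner invariants. Combining the three models yields both strict inequalities for every invariant in the statement, which is the asserted independence. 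I expect the whole argument to be routine once this last point is isolated: the main thing to notice is that $\cov(\Me)$ and $\non(\Null)$ sit at the continuum in the Main Theorem model and must therefore be separated from $\st$ by Laver's theorem rather than by Main Theorem \ref{mainthm1}.
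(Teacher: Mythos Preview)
Your proposal is correct and follows the same approach as the paper, which simply states that the corollary is immediate from combining Main Theorem \ref{mainthm1}, Fact \ref{basicfacts}, and Laver's Theorem \ref{Laver}. You have supplied the routine bookkeeping that the paper leaves implicit: tracing the $\ZFC$ inequalities that push $\mathfrak p$, $\mathfrak e$, $\mfb$, $\add(\Me)$, $\add(\Null)$, $\cov(\Null)$ down to $\aleph_1$ once $\non(\Me)=\aleph_1$, and correctly isolating $\cov(\Me)$ and $\non(\Null)$ as the two invariants that require Laver's model rather than Main Theorem \ref{mainthm1}.
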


The model witnessing Main Theorem \ref{mainthm1} is in some sense the obvious one: a finite support iteration of the ccc forcing notions for specializing trees of height $\omega_1$ with no cofinal branch first introduced in \cite{BMR70}. The meat of the proof is therefore computing cardinal characteristics in this model. As a result we also study this specializing forcing notion and the reals it adds in depth. This appears to be one of the first such studies, though see \cite{ChoZa15} for some related results.

The rest of this paper is organized as follows. We finish this section by recalling the cardinal characteristics we will be studying in this paper. In the next section we provide all necessary preliminaries and basic definitions. We also survey known results including those implying Fact \ref{basicfacts} as well as Laver's aforementioned theorem from \cite{Laver1987} and make some more elementary observations about $\st$. In the following section we study the ccc specializing forcing introduced in \cite{BMR70}. In the Section 4 we look at the model obtained by iterating this forcing with finite support and prove Main Theorem \ref{mainthm1} as well as some related results. Section 5 concludes with open questions and a discussion of avenues for future research.

Before concluding this introduction we briefly recall the cardinals we will study in the proceeding discussion. More information about these cardinals can be found in e.g. \cite{BarJu95,BarHB, BlassHB, Hal17}. Let $\Me$ and $\Null$ denote the ideals of meager and null sets respectively on $2^\omega$ (or any other perfect Polish space, it does not matter for our purposes). For $\mathcal I$ equal to either of them recall the following four cardinals.
\begin{enumerate}
\item
The {\em additivity number}, ${\rm add}(\mathcal I)$, is the least size of a set $\mathcal A \subseteq \mathcal I$ whose union is not in $\mathcal I$.
\item
The {\em uniformity number}, ${\rm non}(\mathcal I)$, is the least size of a set $A \subseteq 2^\omega$ not in $\mathcal I$.
\item
The {\em covering number}, ${\rm cov}(\mathcal I)$ is the least size of a set $\mathcal A \subseteq \mathcal I$ so that $\bigcup \mathcal A = 2^\omega$.
\item
The {\em cofinality number}, ${\rm cof}(\mathcal I)$ is the least size of a set $\mathcal A \subseteq \mathcal I$ so that for every $B \in \mathcal I$ there is an $A \in \mathcal A$ with $B \subseteq A$.
\end{enumerate}

The four numbers above for $\Me$ and $\Null$ alongside the well studied {\em bounding} and {\em dominating} numbers\footnote{See \cite{BlassHB} for definitions of $\mfb$ and $\mfd$. We will not use these numbers here so we omit their definitions.}, $\mfb$ and $\mfd$, fit into a diagram of provable implications known as Cicho\'{n}'s diagram, see \cite[Chapter 2]{BarJu95}. This diagram is pictured as Figure 1 below.

\begin{figure}[h]\label{Figure.Cichon}
\centering
  \begin{tikzpicture}[scale=1.5,xscale=2]
     % place and draw the nodes
     \draw %(.75, 1) node (BsubN) {$\mathcal B(\subseteq_\mathcal N)$}
	 (2,1) node (addm) {$add(\Me)$}
           (4,1) node (nonn) {$non (\Null)$}
           %(2.25,1) node (Din) {$\mathcal D(\in_\mathcal M)$}
           (3,3) node (cofm) {$cof (\Me)$}
           (2,2) node (bbb) {$\mfb$}
           (3,2) node (ddd) {$\mfd$}
	(1, 3) node (covn) {$cov (\Null)$}
           %(3.75, 1) node (BinN)  {$cov (\Null)$}
           %(3.75, 3) node (DsubN) {}
	(1, 1) node (addn) {$add (\Null)$}
	(4, 3) node (cofn) {$cof (\Null)$}
	(0, 1) node (aleph1) {$\aleph_1$}
          (2, 3)  node (nonm) {$non (\Me)$}
          (5, 3) node (ccc) {$\cc$}
          (3, 1) node (covm) {$cov (\Me)$}
	    
           ;
     % draw the arrows
     \draw[->,>=stealth]
	      (aleph1) edge (addn)
            (addn) edge (addm)
            (addm) edge (covm)
            (covm) edge (nonn)
            (addn) edge (covn)
            %(Din) edge (Dleq)
            (addm) edge (bbb)
            (bbb) edge (ddd)
	     (covm) edge (ddd)
            (ddd) edge (cofm)
	     %(Din) edge (BinN)
            (nonm) edge (cofm)
           (cofm) edge (cofn)
	    (cofn) edge (ccc)
           (nonn) edge (cofn)
           (covn) edge (nonm)
           (bbb) edge (nonm)
           %(Dslal) edge (DsubN)
           %(Bin) edge (Bneq)
           %(Bneq) edge (Bin)
           %(DsubN) edge (all)
          %(Din) edge (Dneq)
           %(Dneq) edge (BinN)
          %(Dneq) edge (Dleq)
            
            ;
  \end{tikzpicture}
\caption{The Cicho\'n Diagram. $\mathfrak{x} \to \mathfrak{y}$ means $\ZFC \vdash \mathfrak{x} \leq \mathfrak{y}$}
\end{figure}
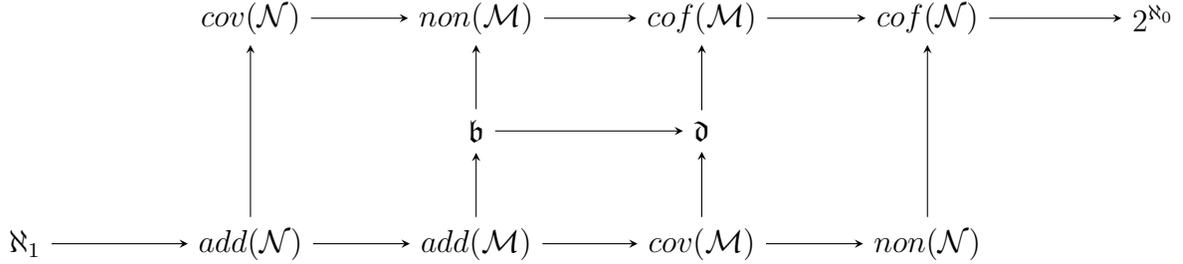

We will also study ``combinatorial" cardinal characteristics of the continuum. These are defined below.

\begin{definition}

\begin{enumerate}
\item
Two sets $A, B \in [\omega]^\omega$ are {\em almost disjoint} if $|A \cap B| < \omega$. A family $\mathcal A \subseteq [\omega]^\omega$ is {\em almost disjoint} if its elements are pairwise almost disjoint and such a family is {\em maximal} or MAD if it is not properly contained in any other almost disjoint family. The {\em almost disjointness number} $\mfa$ is the least size of an infinite MAD family.

\item
Given two sets $A, B \in [\omega]^\omega$ we say that $A$ {\em splits} $B$ if $A \cap B$ and $B \setminus A$ are both infinite. A family $\mathcal S \subseteq [\omega]^\omega$ is {\em splitting} if for every $Y \in [\omega]^\omega$ there is at least one $A \in \mathcal S$ which splits it. The {\em splitting number} $\mathfrak{s}$ is the least size of a splitting family.

\item
A family $\mathcal G \subseteq [\omega]^\omega$ is {\em groupwise dense} if it is downwards closed under almost subsets and given any strictly increasing $f \in \baire$ there is an infinite $A \subseteq \omega$ so that $\bigcup_{k \in A} [f(k), f(k+1)) \in \mathcal G$. The {\em groupwise density number} $\mathfrak{g}$ is the least size of a set of groupwise dense families with empty intersection.
\end{enumerate}

\end{definition}

\bigskip

\noindent {\em Acknowledgments}. The author would like to thank Jeffrey Bergfalk, David Chodounsk\'{y}, Vera Fischer, Chris Lambie-Hanson, Rahman Mohammadpour and Thilo Weinert for some very helpful discussions on this material. In an earlier draft of this paper it was erroneously claimed that $\st$ had not been studied before. Piotr Koszmider kindly sent me the articles \cite{Morass, whereMA, LandverTree} thus straightening out my hubris. Therefore an extra thanks Professor Koszmider for these references as well as his kind explanation of the material therein. Finally thanks to the anonymous referee for a careful reading and some helpful comments.

\section{Introducing the Special Tree Number}

In this section we introduce some terminology and survey some basic facts. Most of our notation is standard, conforming to e.g. the texts \cite{BarJu95, Hal17, JechST, KenST}. To begin we set some vocabulary for trees. Recall that a {\em tree} $T = (T, \sqsubseteq_T)$ is a partial order with the property that for each $t \in T$ the set of strict predecessors of $t$, denoted $P_t$ is well ordered by $\sqsubseteq_T$. For an ordinal $\alpha$ we denote by ${\rm Lev}_\alpha(T)$ the set of $t \in T$ so that $P_t$ has order type $\alpha$. The {\em height} of $T$ is the least $\alpha$ so that ${\rm Lev}_\alpha(T)$ is empty. In this paper we will only be considering trees of height ${\leq}\omega_1$. A tree $T$ is {\em normal} if every $t \in T$ is comparable with some $s \in {\rm Lev}_\alpha(T)$ for each $\alpha$ less than the height of $T$ and given any two nodes $t, s \in {\rm Lev}_{\delta}(T)$ for some limit ordinal $\delta$ if $P_t = P_s$ then $t = s$ i.e. no sequence has two distinct limits. From now on we will assume without further mention that all our trees are normal\footnote{Even if some of them might be special - the arboreal terminology leaves much to be desired.} and, moreover, {\em rooted} i.e. that there is a unique minimal node $\emptyset_T \in T$, called the root. It's well known that restricting our attention to such trees causes no loss of generality for our purposes. 

A {\em branch} through a tree is a maximal, linearly ordered subset. A tree $T$ of height $\omega_1$ is called an {\em Aronszajn tree} if ${\rm Lev}_\alpha(T)$ is countable for each $\alpha < \omega_1$ and there is no uncountable branch. We say that a tree $T$ is a {\em generalized Aronszajn tree} if it is of height $\omega_1$ and has no uncountable branch, with no assumption of the size of levels\footnote{The vocabulary ``Aronszajn tree" is of course standard, while generalized Aroszajn tree is ad hoc vocabulary introduced in this paper so as to circumvent making assumptions about the size of the levels. Vocabulary from the literature along a similar vein include the term {\em wide Aroszajn trees} \cite{DzSh21}, where it is assumed that the levels are of size $\aleph_1$ and $(\omega_1, {\leq}\kappa)$-Aronszajn tree where the levels are assumed to have size $\leq\kappa$, \cite{FuchsMinden}. Since we do not a priori want to assume the levels are of any particular size or bound we avoid these words.}. A tree $T$ of height $\omega_1$ is {\em special} if it can be decomposed into countably many antichains, or equivalently, if it carries a {\em specializing function} i.e. a function $f:T \to \omega$ which is injective on chains. Clearly being special implies being generalized Aronszajn. Note also that if $T$ is special then, considering $T$ as a forcing notion, forcing with $T$ collapses $\omega_1$ since the specializing function becomes an injection from the generic, cofinal branch (of length $\omega_1^V$) into $\omega$.

The main character of this paper is the following cardinal.

\begin{definition}[The Special Tree Number]
The {\em special tree number}, denoted $\st$, is the least size of a non-special generalized Aronszajn tree.
\end{definition}

As noted in the introduction, this cardinal seems to have been first suggested in \cite{whereMA}. It was further developed in \cite{LandverTree}, though not given a name and later called $\sigma$ in \cite{Morass}\footnote{The author was not aware of the competing notation for this idea until after a first draft of this paper had appeared online. At Professor Koszmider's encouragement we keep the notation $\st$ as it lines up with contemporary notation for cardinal characteristics.}. The following three well known results are pertinent to our discussion.

\begin{theorem}[Baumgartner, Malitz and Reinhardt \cite{BMR70}]
Under $\MA + \neg \CH$ every generalized Aronszajn tree of size ${<}2^{\aleph_0}$ is special.
\end{theorem}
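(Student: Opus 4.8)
The plan is to produce a specializing function for $T$ generically, using the finite-approximation specialization forcing of Baumgartner, Malitz and Reinhardt together with $\MA + \neg\CH$, the point being that $T$ has fewer than $\cc$ nodes so fewer than $\cc$ dense sets suffice. Concretely, I would let $\P = \P(T)$ be the poset whose conditions are finite partial functions $p : T \to \omega$ that are injective on chains (i.e.\ $p(s) \neq p(t)$ whenever $s \neq t$ are $\sqsubseteq_T$-comparable and both lie in $\mathrm{dom}(p)$), ordered by reverse inclusion. For each $t \in T$ the set $D_t = \{p \in \P : t \in \mathrm{dom}(p)\}$ is dense, since in extending a condition $p$ to include $t$ only the finitely many values that $p$ assigns to nodes comparable with $t$ are forbidden, leaving a legal value in $\omega$. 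There are $|T| < \cc$ such dense sets, so once $\P$ is shown to be ccc, $\MA + \neg\CH$ provides a filter $G$ meeting every $D_t$; then $f = \bigcup G$ is a total function $T \to \omega$, and compatibility of conditions in $G$ guarantees that any two comparable nodes lie in a common condition and hence receive distinct values, so $f$ is a specializing function and $T$ is special.

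The heart of the matter, and the only place the hypothesis that $T$ has no cofinal branch is used, is the ccc of $\P$. I would argue by contradiction: suppose $\{p_\xi : \xi < \omega_1\}$ is an antichain. Applying the $\Delta$-system lemma to the finite domains and thinning, I may assume that on an uncountable $I$ the domains form a $\Delta$-system with root $R$, that the restrictions $p_\xi \restriction R$ all agree, that each $\mathrm{dom}(p_\xi) \setminus R$ has a fixed size $m$, and that under a canonical enumeration $a^\xi_0, \dots, a^\xi_{m-1}$ of $\mathrm{dom}(p_\xi) \setminus R$ the vector of assigned values is constant in $\xi$. Incompatibility of $p_\xi$ and $p_\eta$ then yields a $\sqsubseteq_T$-comparable, equally-labelled pair of nodes, one in $\mathrm{dom}(p_\xi) \setminus R$ and one in $\mathrm{dom}(p_\eta) \setminus R$: a conflict involving a node of $R$ is impossible, since such a pair would already sit inside a single condition and violate its injectivity on chains.

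The extraction of a contradiction is then an iterated pigeonhole rather than any Ramsey-theoretic partition relation. For fixed $\xi$, the finitely many possible ``conflict types'' (which side carries the lower node, together with the two coordinates involved) must be constant on an uncountable set of partners $\eta$; the type in which $\eta$ carries the lower node is impossible, because it would place uncountably many distinct nodes $a^\eta_l$ below the single node $a^\xi_{l'}$, whereas every node of a tree of height $\omega_1$ has only countably many predecessors. Hence for each $\xi$ there are coordinates $l(\xi), l'(\xi)$ with $a^\xi_{l(\xi)} \sqsubset_T a^\eta_{l'(\xi)}$ for uncountably many $\eta$; pigeonholing the pair $(l(\xi), l'(\xi))$ once more produces an uncountable $Z$ and fixed $l^*, l'^*$ such that, for every $\xi \in Z$, one has $a^\xi_{l^*} \sqsubset_T a^\eta_{l'^*}$ for uncountably many $\eta$. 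Given $\xi_1, \xi_2 \in Z$, a common witness $\eta$ places both $a^{\xi_1}_{l^*}$ and $a^{\xi_2}_{l^*}$ below the single node $a^\eta_{l'^*}$, whose predecessors are linearly ordered, so the two are comparable; thus $\{a^\xi_{l^*} : \xi \in Z\}$ is an uncountable chain, contradicting the absence of a cofinal branch and completing the ccc proof.

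The delicate part is precisely this ccc argument, and I would stress the two tree facts it rests on: that predecessor sets are countable (which kills one orientation) and that there is no uncountable chain (which yields the final contradiction). Since these facts hold for every tree of height $\omega_1$ with no cofinal branch regardless of the size of its levels, the argument never appeals to countability of levels and so applies uniformly to all generalized Aronszajn trees; the density computation and the application of $\MA + \neg\CH$ to a ccc poset with fewer than $\cc$ dense sets are then entirely routine.
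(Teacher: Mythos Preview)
Your overall strategy is exactly the standard Baumgartner--Malitz--Reinhardt approach, and the paper itself does not give a proof, merely noting that the crux is the ccc of $\P(T)$ and citing \cite{BMR70} and \cite[Theorem 16.17]{JechST}. The density of the $D_t$, the application of $\MA$, the $\Delta$-system reduction, and the observation that for each fixed $\xi$ only countably many $\eta$ can carry the lower node are all correct.

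The gap is in the last step. You assert that for $\xi_1,\xi_2\in Z$ a common witness $\eta\in W_{\xi_1}\cap W_{\xi_2}$ exists, but in a tree two nodes lie strictly below a common node if and only if they are already comparable; thus $W_{\xi_1}\cap W_{\xi_2}\neq\emptyset$ is \emph{equivalent} to the comparability of $a^{\xi_1}_{l^*}$ and $a^{\xi_2}_{l^*}$ that you are trying to establish, and the argument is circular at this point. Your earlier step does show that for each $\xi$ all but countably many $\eta$ have the conflict oriented ``$\xi$ below'', but the subsequent pigeonhole on the pair $(l,l')$ only leaves each $W_\xi$ uncountable, not co-countable, and two merely uncountable subsets of $\omega_1$ need not meet. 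The classical repair is an induction on $m$: one proves the lemma that in any tree with no uncountable chain, among $\omega_1$ pairwise disjoint $m$-element subsets two must be elementwise incomparable (your upward thinning is the opening move of the inductive step; one then deletes a well-chosen coordinate and invokes the case $m-1$). See \cite[Theorem~4]{BMR70} or \cite[Lemma~16.19]{JechST} for a complete argument.
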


A proof of this result can be found in \cite[Theorem 4]{BMR70} or, for a more modern presentation see \cite[Theorem 16.17]{JechST}. However, let us note the point is really that, for any generalized Aronszajn tree $T$ the forcing notion $\P(T)$ consisting of finite, partial functions $p:T \to \omega$ which are injective on chains is ccc. We will discuss this forcing notion in more depth in Sections 3 and 4.

\begin{proposition}
In $\ZFC$ there is always a non-special generalized Aronszajn tree of cardinality $\cc$.
\label{T(S)}
\end{proposition}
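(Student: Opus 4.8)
The plan is to exhibit the required tree explicitly as $T(S)$ for a suitable stationary set $S$, and then to extract its four properties (height $\omega_1$, no cofinal branch, size $\cc$, non-special) one at a time. Fix a set $S\subseteq\omega_1$ of limit ordinals that is both stationary and co-stationary (so that $\omega_1\setminus S$ is stationary as well; such an $S$ exists in $\ZFC$). Let $T(S)$ be the set of all closed, bounded $t\subseteq\omega_1$ all of whose limit points lie in $S$, ordered by end-extension: $s\sqsubseteq_T t$ iff $s=t\cap(\max s+1)$, i.e. $s$ is an initial segment of $t$. Adding the root $\emptyset_T$ makes this a normal tree, the node $t$ sits on level $\mathrm{otp}(t)$, and since every countable order type can be realised by such a $t$ (using that $S$ is unbounded one builds $t$ by placing its countably many limit points inside $S$) the height is exactly $\omega_1$.

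Next the two easy global properties. A cofinal branch through $T(S)$ is the family of initial segments of an unbounded closed $C\subseteq\omega_1$ whose limit points all lie in $S$; but the set of limit points of a club is again a club, so this would force $S$ to contain a club, contradicting that $\omega_1\setminus S$ is stationary. Hence $T(S)$ has no cofinal branch and is a generalized Aronszajn tree. For the cardinality, every node is a countable subset of $\omega_1$, so $|T(S)|\le\aleph_1^{\aleph_0}=(2^{\aleph_0})^{\aleph_0}=\cc$; conversely, fixing one $\eta\in S$ together with an increasing sequence $\langle\eta_n:n<\omega\rangle$ converging to it, the sets $\{\eta_n:n\in X\}\cup\{\eta\}$ for $X\in[\omega]^\omega$ are $\cc$-many distinct nodes, so $|T(S)|=\cc$.

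The heart of the matter, and the step I expect to be the main obstacle, is showing that $T(S)$ is \emph{not} special. I would prove the contrapositive: a specializing function $c:T(S)\to\omega$ produces a club disjoint from $S$, contradicting stationarity. The subtle point is that one cannot hope to finish merely by exhibiting two comparable nodes with the same colour, since a function injective on chains has no such pair by definition; the contradiction must be global. Concretely, I would take a continuous increasing chain $\langle M_\xi:\xi<\omega_1\rangle$ of countable elementary submodels of some $H_\theta$ with $c,S,T(S)\in M_0$, and set $C=\{\delta_\xi:\xi<\omega_1\}$ where $\delta_\xi=M_\xi\cap\omega_1$; this $C$ is a club. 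The claim to be proved is $C\cap S=\varnothing$, which contradicts the stationarity of $S$. Fixing $\delta=\delta_\xi\in S$ and writing $M=M_\xi$ (so $M\cap\omega_1=\delta\in S$), one forms a node $t^{*}$ with $\max t^{*}=\delta$ as the union of an increasing chain of nodes drawn from $M$ together with $\{\delta\}$ — this lands in $T(S)$ precisely because $\delta\in S$ is the unique new limit point — and then analyses $c(t^{*})$ by reflecting the colour-classes of end-extensions back into $M$.

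Making this last analysis yield a genuine contradiction is where the real work lies, and where stationarity is indispensable: since no single branch can be obstructed (all chains are countable, so an injection into $\omega$ always exists), the argument must run a rank/derivative on $T(S)$ relative to $c$ measuring how far colours survive through successive limit stages, and exploit the fact that $\delta\in S$ is itself a limit point lying in $S$ to force an infinite descent, contradicting well-foundedness of the ordinals; I would organise this as a pressing-down argument on the data induced at level $\delta$. Finally, I note that the requirement that the witnessing tree have size $\cc$ rather than something smaller is not an accident: by the Baumgartner--Malitz--Reinhardt theorem quoted above, under $\MA+\neg\CH$ every generalized Aronszajn tree of size ${<}\cc$ is special, so any $\ZFC$ example is forced to have full size $\cc$, which $T(S)$ does.
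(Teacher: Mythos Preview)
Your construction and the first three verifications (height, no branch, cardinality $\cc$) are correct and match the paper's choice of witness $T(S)$. The difference is entirely in how non-specialness is argued, and here your proposal has a real gap.

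The paper does not attempt a direct combinatorial argument at all. It simply quotes the known fact that $T(S)$, viewed as a forcing notion, is $\omega$-distributive (Baire) whenever $S$ is stationary, and then observes that a special tree, when forced with, collapses $\omega_1$ (the specializing function restricted to the generic branch is an injection of $\omega_1^V$ into $\omega$). Since a Baire forcing preserves $\omega_1$, $T(S)$ cannot be special. This is a two-line reduction to a standard lemma.

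Your elementary-submodel setup is perfectly sound and \emph{can} be completed, but not via the machinery you propose. The talk of a rank/derivative on $T(S)$, infinite descent, and pressing down is off-track and would not lead anywhere. The missing idea is much simpler and is already implicit in your phrase ``reflecting the colour-classes of end-extensions back into $M$'': when building the chain $t_0\sqsubset t_1\sqsubset\cdots$ inside $M$ with suprema cofinal in $\delta$, arrange at step $n$ that $t_{n+1}$ extends some node of colour $n$ \emph{whenever $M$ sees such an extension of $t_n$}. Now set $t^*=\bigcup_n t_n\cup\{\delta\}$ and let $n_0=c(t^*)$. Since $t^*\sqsupset t_{n_0}$ has colour $n_0$, elementarity gives such an extension already in $M$, so by construction some $s\sqsubseteq t_{n_0+1}\sqsubset t^*$ has $c(s)=n_0=c(t^*)$, contradicting injectivity on chains. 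That is the whole argument; no ranks, no Fodor.

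So: right tree, right framework, but the crucial closing step is missing and the suggested route to it (derivatives, pressing down) would not work. Either finish the reflection argument as above, or adopt the paper's Baire-forcing shortcut.
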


\begin{proof}
There are many constructions, the original is probably due to Kurepa \cite{Kurepa}. See the survey article \cite{Tod84} for more details. For the sake of completeness let us sketch the one from \cite{Tod81}, see that article for more details. Let $S \subseteq \omega_1$ be stationary and co-stationary. Let $T(S)$ be the tree consisting of closed, bounded subsets of $S$ ordered by end extension. Clearly this tree has cardinality $\cc$. Since $S$ is stationary there are such sequences of arbitrary countable length. Since $S$ is co-stationary there is no uncountable branch (since its union would be a club through $S$). Finally it is well known that such a tree is Baire i.e. adds no $\omega$ sequences of ordinals, see \cite[Theorem 23.8]{KenST}, and in particular does not collapse $\omega_1$ hence $T$ is not special.
\end{proof}

Recall that an Aronszajn tree $S$ is {\em Souslin} if it is ccc (and hence not special). It is well known that consistently there are Souslin trees. Also recall a forcing notion $\P$ is {\em Knaster} if every uncountable $A \subseteq \P$ contains an uncountable $B \subseteq A$ of pairwise compatible elements. If $\P$ is Knaster and $S$ is a Souslin tree then $\forces_\P$``$\check{S}$ is Souslin" (see \cite[Lemma V.4.13]{KenST}).

\begin{fact}[Lemma V.4.10 of \cite{KenST}]
Finite support iterations of Knaster forcing notions are Knaster. In particular, if $S$ is Souslin and $\P$ is a finite support iteration of Knaster forcing notions, then $\forces_\P$``$\check{S}$ is Souslin".
\end{fact}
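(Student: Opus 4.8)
The plan is to split the statement into two independent pieces and then recombine them: first, that a finite support iteration of Knaster posets is again Knaster; second, that a single Knaster forcing preserves Souslinity. The ``in particular'' clause is then immediate, since by the first piece such an iteration $\P$ is Knaster, and by the second piece a Knaster $\P$ keeps $\check S$ Souslin.

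For the iteration statement I would induct on the length $\delta$ of $\langle \P_\alpha, \dot\Q_\alpha : \alpha < \delta\rangle$, the base case being trivial. At a limit stage, given an uncountable $A = \{p_\xi : \xi < \omega_1\} \subseteq \P_\delta$, I would apply the $\Delta$-system lemma to the finite supports $\mathrm{supp}(p_\xi)$ to get an uncountable $A' \subseteq A$ whose supports form a $\Delta$-system with finite root $r$; as $\delta$ is a limit, fix $\gamma < \delta$ with $r \subseteq \gamma$. The key observation is that for $\xi \neq \eta$ in $A'$ the conditions $p_\xi, p_\eta$ are compatible as soon as the restrictions $p_\xi \restriction \gamma, p_\eta \restriction \gamma$ are compatible in $\P_\gamma$: off $\gamma$ their supports are disjoint, so a common extension on $[0,\gamma)$ together with the non-conflicting union of the tails is a common extension. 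Applying the inductive hypothesis to $\P_\gamma$ (or, if uncountably many restrictions coincide, directly) then yields an uncountable pairwise compatible subfamily. The successor case $\delta = \gamma+1$ reduces, via the same support analysis, to the two-step lemma: if $\P$ is Knaster and $\forces_\P$ ``$\dot\Q$ is Knaster'' then $\P * \dot\Q$ is Knaster.

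The two-step lemma is where the real work lies and is the step I expect to be the main obstacle. Given $\{(p_\xi, \dot q_\xi) : \xi < \omega_1\}$, Knasterness of $\P$ yields an uncountable $I_0$ with $\{p_\xi : \xi \in I_0\}$ pairwise compatible; the difficulty is to thin $I_0$ further so that the name-valued second coordinates become simultaneously compatible. One cannot simply pass to a generic containing uncountably many of the $p_\xi$, since Knasterness --- unlike precaliber $\aleph_1$ --- does not produce centered subfamilies; instead one reflects the forced Knasterness of $\dot\Q$ to a ground-model coloring of $[I_0]^2$ by compatibility of the $\dot q_\xi$ and uses the ccc-ness of $\P$ to extract an uncountable homogeneous set, pulling the witnesses back along common extensions of the (already compatible) first coordinates. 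This is Lemma V.4.10 of \cite{KenST}, to which I would ultimately appeal. I note that in our application the iterands $\P(T)$ are in fact of precaliber $\aleph_1$ --- the $\Delta$-system-plus-pigeonhole argument delivers centered, not merely linked, refinements --- so for that purpose the iteration preservation is entirely routine.

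For the second piece the engine is the lemma that the product of a Knaster poset with a ccc poset is ccc. Since a Souslin tree $S$ is ccc as a forcing notion, this gives that $\P \times S$ is ccc: given an uncountable subset of $\P \times S$, Knasterness of $\P$ makes uncountably many first coordinates pairwise compatible, whence incompatibility of the pairs would force an uncountable antichain in $S$, contradicting its ccc-ness. From $\P \times S$ ccc I would conclude in $V^\P$ that (i) $S$ has no uncountable antichain, by the standard fact that $\P \times S$ ccc implies $\forces_\P$ ``$\check S$ is ccc''; (ii) $S$ acquires no uncountable branch, since a name $\dot b$ for a (necessarily new) cofinal branch cannot be decided below any condition, so one recursively builds a binary splitting tree of conditions forcing pairwise incomparable nodes into $\dot b$ and reads off an uncountable antichain in $\P \times S$ --- this is exactly where Knaster rather than mere ccc is needed, as $\P = S$ shows a ccc forcing can add a cofinal branch to a Souslin tree; and (iii) the levels stay countable and the height stays $\omega_1$, as $\P$ is ccc and hence preserves $\omega_1$. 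Together these say $\check S$ is still Souslin, completing the argument.
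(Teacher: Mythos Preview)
The paper does not prove this statement at all: it is stated as a \emph{Fact} with a bare citation to Kunen (Lemma~V.4.10, and just above it Lemma~V.4.13 for the Souslin preservation), so there is no in-paper argument to compare your proposal against. Your outline is therefore strictly more than what the paper offers, and the overall architecture---induct on length using the $\Delta$-system at limits and the two-step lemma at successors, then combine with ``Knaster $\times$ ccc is ccc'' for the Souslin clause---is the standard one and is correct. One minor circularity: you appeal to ``Lemma~V.4.10 of \cite{KenST}'' for the two-step lemma, but that is exactly the reference the Fact itself carries, so as written you are citing the statement in its own proof; you should either sketch the two-step argument or cite the specific sub-lemma in Kunen.

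There is one genuine wrinkle in your part~(ii), the ``no new branch'' step. A binary splitting tree of conditions of height~$\omega$ only produces countably many pairs $(p_s,t_s)$, so you do not directly read off an \emph{uncountable} antichain in $\P\times S$; and pushing the construction to height~$\omega_1$ founders at limit stages, where there is no reason a lower bound of the $p_{s\restriction n}$ exists. The clean argument uses Knaster on the first coordinate rather than on the product: for each $\alpha<\omega_1$ choose $p_\alpha$ deciding $\dot b\cap\mathrm{Lev}_\alpha(S)$ as some $t_\alpha$; Knasterness of $\P$ gives an uncountable $I$ with the $p_\alpha$ pairwise compatible, whence for $\alpha<\beta$ in $I$ a common extension forces $t_\alpha,t_\beta\in\dot b$, so $t_\alpha<_S t_\beta$, and $\{t_\alpha:\alpha\in I\}$ is a cofinal branch already in $V$, contradicting that $S$ is Aronszajn. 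With this correction your sketch goes through.
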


Putting these three results together, the following facts are immediate, thus justifying the definition of $\st$ as a cardinal characteristic.

\begin{proposition}
\begin{enumerate}
\item
$\aleph_1 \leq \st \leq \cc$
\item
If $\MA$ holds then $\st = \cc$.
\item
It is consistent that $\st = \aleph_1 < \cc = \kappa$ for any cardinal $\kappa > \aleph_1$ of uncountable cofinality.
\end{enumerate}
\end{proposition}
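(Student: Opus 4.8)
The plan is to assemble the three clauses directly from the results already established in the excerpt, since each is essentially a packaging of a prior fact.

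For clause (1), the upper bound $\st \leq \cc$ is immediate from Proposition \ref{T(S)}: the tree $T(S)$ built from a stationary, co-stationary $S \subseteq \omega_1$ is a non-special generalized Aronszajn tree of cardinality exactly $\cc$, so the least size of such a tree is at most $\cc$. For the lower bound $\aleph_1 \leq \st$, I would argue that no tree of size $\leq \aleph_0$ can witness the definition: a tree of height $\omega_1$ must have uncountably many nodes (indeed at least one node on each of the $\omega_1$-many nonempty levels), so any generalized Aronszajn tree has size at least $\aleph_1$. Hence $\st \geq \aleph_1$, giving $\aleph_1 \leq \st \leq \cc$.

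For clause (2), I would invoke the Baumgartner--Malitz--Reinhardt theorem (Theorem \ref{BMR70}, stated just above) directly. Under $\MA + \neg\CH$ every generalized Aronszajn tree of size ${<}\cc$ is special, so any non-special generalized Aronszajn tree must have size $\geq \cc$; combined with the upper bound $\st \leq \cc$ from clause (1), this forces $\st = \cc$. The one wrinkle is that $\MA$ as a hypothesis includes the case $\CH$, where $\cc = \aleph_1$ and the conclusion $\st = \cc = \aleph_1$ holds trivially from clause (1); so I would note that the nontrivial content is exactly the $\neg\CH$ case covered by the quoted theorem.

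For clause (3), the relevant mechanism is the preservation of Souslin trees under Knaster iterations. Starting in a model containing a Souslin tree $S$ (e.g. a model of $\diamondsuit$), I would force with a finite support iteration $\P$ of Knaster forcings of length $\kappa$ designed to make $\cc = \kappa$ while keeping the iteration Knaster---for instance the standard iteration forcing $\MA(\mathrm{Knaster})$. By the Fact quoted above, the whole iteration $\P$ is Knaster, and so $\forces_\P$``$\check S$ is Souslin''. A Souslin tree is ccc and hence non-special (forcing with a special tree collapses $\omega_1$, as noted in the text, whereas a ccc tree cannot), and it has height $\omega_1$ with no cofinal branch, so it is a non-special generalized Aronszajn tree of size $\aleph_1$ in the extension; thus $\st = \aleph_1$ there while $\cc = \kappa$. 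The main point to verify carefully is that the iteration can simultaneously be arranged to be Knaster and to force $\cc = \kappa$ for the prescribed $\kappa$ of uncountable cofinality; this is exactly the content of the standard $\MA(\mathrm{Knaster})$ consistency construction referenced via \cite{KenST}, so I would cite that rather than rebuild it.

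The main obstacle, such as it is, lies in clause (3): one must ensure the bookkeeping of the Knaster iteration genuinely produces the desired continuum value while never introducing a non-Knaster stage that could kill the Souslinity of $S$. Clauses (1) and (2) are essentially immediate consequences of the preceding Proposition \ref{T(S)} and Theorem \ref{BMR70}, so I expect them to require no more than the packaging described above.
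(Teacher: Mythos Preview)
Your proposal is correct and matches the paper's approach exactly: the paper does not give a separate proof of this proposition but simply states that it is immediate from the three preceding results (the Baumgartner--Malitz--Reinhardt theorem, Proposition~\ref{T(S)}, and the Knaster-iteration Fact), and your write-up is precisely the unpacking of that remark.
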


The last item can be strengthened significantly.

\begin{observation}
For any regular cardinal $\kappa$ it is consistent that $\MA({\rm Knaster}) + \cc = \kappa$ holds and there is a Souslin tree. In particular it is consistent that $\st = \aleph_1$ in a model where all cardinals in the van Douwen and Cicho\'{n} diagrams are arbitrarily large.
\end{observation}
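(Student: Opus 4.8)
The plan is to force $\MA({\rm Knaster}) + \cc = \kappa$ by the standard finite support iteration, while arranging that a ground model Souslin tree survives; since a Souslin tree is a non-special generalized Aronszajn tree of size $\aleph_1$, its survival pins $\st$ at $\aleph_1$. I would begin with a ground model $V \models \GCH$ in which $\diamondsuit$ holds (e.g.\ $V = L$, or first force $\diamondsuit$), and fix a Souslin tree $S \in V$. Given a regular $\kappa$, note $\GCH$ yields $\kappa^{<\kappa} = \kappa$, so the usual bookkeeping is available. I would then build a finite support iteration $\langle \P_\alpha, \dot\Q_\alpha : \alpha < \kappa \rangle$ of length $\kappa$ in which each $\dot\Q_\alpha$ is a $\P_\alpha$-name for a Knaster poset of size ${<}\kappa$, chosen by a bookkeeping function anticipating, cofinally often, every pair consisting of a name for a Knaster poset of size ${<}\kappa$ together with a name for a family of ${<}\kappa$ dense sets. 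The density and reflection argument is identical to the classical construction forcing $\MA + \cc = \kappa$ (see \cite{KenST}); the only modification is that the iterands are restricted to names for Knaster posets rather than arbitrary ccc posets, so $\P_\kappa$ forces $\MA({\rm Knaster}) + \cc = \kappa$.

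The crux is the preservation of $S$. By the Fact above (Lemma V.4.10 of \cite{KenST}) a finite support iteration of names for Knaster posets is itself Knaster, and a Knaster forcing preserves Souslin trees (Lemma V.4.13 of \cite{KenST}); hence $\forces_{\P_\kappa}$ ``$\check{S}$ is Souslin''. In $V^{\P_\kappa}$ the tree $S$ is then still an Aronszajn, in particular generalized Aronszajn, tree of size $\aleph_1$ which is ccc and therefore not special (as recalled in the text, a Souslin tree cannot be a union of countably many antichains, since on $\aleph_1$ nodes one antichain would be uncountable). Thus $S$ witnesses $\st \leq \aleph_1$, and together with the $\ZFC$ bound $\aleph_1 \leq \st$ we conclude $\st = \aleph_1$ in $V^{\P_\kappa}$.

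Finally I would check that $\MA({\rm Knaster})$ forces every invariant in the van Douwen and Cicho\'n diagrams to equal $\cc = \kappa$. The point is that the forcings witnessing the usual $\MA$-consequences for these invariants may in each case be taken to be Knaster: Hechler forcing (for $\mfb = \mfd = \cc$) and the posets diagonalizing a given almost disjoint or (un)reaping family are $\sigma$-centered, hence Knaster, while random real forcing and amoeba forcing (for the invariants of $\Null$) have precaliber $\aleph_1$ and are therefore Knaster. Since $\MA({\rm Knaster})$ supplies generics meeting the relevant dense sets for all such posets, the standard $\MA$-style argument for each invariant goes through verbatim, forcing it up to $\cc = \kappa$. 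The one genuinely delicate point, and precisely the place where the Souslin hypothesis resists $\MA({\rm Knaster})$, is the Knaster-ness of the whole iteration even though the iterands $\dot\Q_\alpha$ are only \emph{forced} to be Knaster in the intermediate models rather than Knaster in $V$; this is exactly the content of the cited Fact, and it is what lets $S$ survive while everything else is made large.
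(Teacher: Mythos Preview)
Your proof is correct and follows essentially the same approach as the paper's: force $\MA({\rm Knaster})+\cc=\kappa$ by a finite support iteration of Knaster posets over a model containing a Souslin tree, invoke the cited Fact that such iterations are Knaster and hence preserve the tree, and observe that the van Douwen and Cicho\'n invariants can all be pushed up by Knaster (indeed $\sigma$-linked) forcings. One minor quibble: asserting that random and amoeba forcing have \emph{precaliber} $\aleph_1$ is stronger than what you need and is not a $\ZFC$ theorem for the measure algebra; it suffices that both are Knaster (the measure algebra by the standard separability argument, amoeba because it is $\sigma$-linked), which is exactly what the paper cites.
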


\begin{proof}
It is well know that one can force $\MA({\rm Knaster}) + \cc = \kappa$ by a finite support iteration of Knaster forcing notions, see \cite[Theorem V.4.12]{KenST}, and hence preserve a fixed Souslin tree. Since all cardinals in the Cicho\'{n} and van Douwen diagrams can be increased by Knaster and indeed $\sigma$-linked forcing (\cite{BlassHB, KenST}) the observation follows.
\end{proof}

\begin{remark}
The anonymous referee astutely points out that in fact we can do even better than $\MA({\rm Knaster})$ here. Indeed there are maximal forcing axioms connected to any Souslin tree. For any fixed Souslin tree $S$ let $\MA(S)$ denote the statement that $S$ is Souslin and forcing axiom holds for all ccc forcing notions which preserve the Souslin-ness of $S$ and idem for $\PFA(S)$ with ``ccc" replaced by ``proper". The former is consistent with the continuum any regular cardinal ${>} \aleph_1$ by \cite{Larson99, LT02} and the latter is consistent by \cite{Miyamoto} (and proves the continuum is $\aleph_2$).  See \cite{LT02, PFA(S)} for applications of these fascinating axioms.
\end{remark}

The main goal of this paper is to show that in fact $\st$ can also be larger than many cardinal characteristics as well and therefore is independent of them. There is one result in the literature implicitly of this form, due to Laver. Recall that for a cardinals $\kappa$ and $\lambda$ the {\em dual random model} (for $\kappa$ and $\lambda$) is the model obtained by adding $\lambda$-many random reals over a model of $\MA + \cc = \kappa$.

\begin{theorem}[Laver, see Page 531 of \cite{Laver1987}]
$\st = \kappa$ in the dual random model for any $\kappa > \aleph_1$ and $\lambda$. In particular, let $\aleph_1 < \kappa < \lambda$ be cardinals with $\kappa$ regular and $\lambda$ of uncountable cofinality. It is consistent that $\non(\Null) = \aleph_1 < \st = \kappa < \cov(\Null) = \lambda$. \label{Laver}
\end{theorem}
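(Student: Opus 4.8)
The plan is to realize the dual random model as $V_0[\mathbb{B}_\lambda]$, where $V_0 \models \MA + \cc = \kappa$ and $\mathbb{B}_\lambda$ is the measure algebra adding $\lambda$ random reals $\langle r_\alpha : \alpha < \lambda\rangle$, and then to verify $\cc = \lambda$, $\non(\Null) = \aleph_1$, $\cov(\Null) = \lambda$ and $\st = \kappa$ separately. I will use throughout that $\mathbb{B}_\lambda$ is ccc (so cardinals, cofinalities and stationary subsets of $\omega_1$ are preserved) and the Maharam/product structure: for $A \subseteq \lambda$ the factor $\mathbb{B}_A$ is a complete suborder, $V_0[\mathbb{B}_\lambda] = V_0[\mathbb{B}_A][\mathbb{B}_{\lambda \setminus A}]$, and each $r_\beta$ with $\beta \notin A$ is random over $V_0[\mathbb{B}_A]$. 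Since any real in $V_0[\mathbb{B}_\lambda]$ is added by countably many coordinates, every null set is coded in some $V_0[\mathbb{B}_A]$ with $A$ countable. I assume $\lambda^{\aleph_0} = \lambda$ (so that the density of the measure algebra on $2^\lambda$ is $\lambda$ and hence $\cc = \lambda$ in the extension).

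The two measure computations are routine and essentially independent of the starting model. For $\non(\Null) = \aleph_1$, the set $X = \{r_\alpha : \alpha < \omega_1\}$ is non-null: if a null set $N$ covered $X$, then $N$ is coded in $V_0[\mathbb{B}_A]$ for some countable $A$, and choosing $\alpha < \omega_1$ with $\alpha \notin A$ the real $r_\alpha$ is random over $V_0[\mathbb{B}_A]$, so $r_\alpha \notin N$, a contradiction; as $|X| = \aleph_1$ and $\non(\Null) \geq \aleph_1$ always, equality follows. For $\cov(\Null) = \lambda$, given null sets $\{N_i : i < \delta\}$ with $\delta < \lambda$, each $N_i$ is coded in $V_0[\mathbb{B}_{A_i}]$ with $A_i$ countable; since ${\rm cf}(\lambda) > \omega$ the set $A = \bigcup_{i<\delta} A_i$ has size $< \lambda$, so for any $\beta \in \lambda \setminus A$ the real $r_\beta$ avoids every $N_i$ and the $N_i$ do not cover $2^\omega$. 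Thus $\cov(\Null) \geq \lambda$, and with $\cov(\Null) \leq \cc = \lambda$ this gives equality.

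For the bound $\st \leq \kappa$ I exhibit a non-special generalized Aronszajn tree of size $\kappa$ surviving into the extension. Fix in $V_0$ a stationary, co-stationary $S \subseteq \omega_1$ and let $T(S)$ be the tree of Proposition \ref{T(S)}, which in $V_0$ is a non-special generalized Aronszajn tree of size $\cc^{V_0} = \kappa$. Keeping the $V_0$-copy of $T(S)$, it still has size $\kappa$ and height $\omega_1$, and it has no cofinal branch because $\omega_1 \setminus S$ remains stationary (ccc preserves stationarity) while a cofinal branch would give a club contained in $S$. Moreover the standard pressing-down argument showing $T(S)$ is Baire goes through verbatim in $V_0[\mathbb{B}_\lambda]$ from the stationarity of $S$, so $T(S)$ does not collapse $\omega_1$ and hence remains non-special. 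As $|T(S)| = \kappa < \lambda = \cc$, this witnesses $\st \leq \kappa$.

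The heart of the matter, and the part genuinely due to Laver, is the lower bound $\st \geq \kappa$: I must show $\mathbb{B}_\lambda$ forces that every generalized Aronszajn tree of size $< \kappa$ is special. First I would localize: if $T$ is such a tree in $V_0[\mathbb{B}_\lambda]$, then since $\kappa$ is regular and $\mathbb{B}_\lambda$ is ccc, the $< \kappa$ bits coding $T$ use only $< \kappa$ coordinates, so $T \in V_0[\mathbb{B}_A]$ for some $A$ with $|A| < \kappa$; and a specializing function produced in $V_0[\mathbb{B}_A]$ stays injective on chains after the further random forcing, since this is a property of comparable pairs and so is upward absolute. It therefore suffices to prove that adding $< \kappa$ random reals over $V_0 \models \MA + \cc = \kappa$ preserves ``every generalized Aronszajn tree of size $< \kappa$ is special'' (a statement true in $V_0$ by the Baumgartner--Malitz--Reinhardt theorem). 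I expect this to be the main obstacle: it is a genuine preservation theorem and not a consequence of applying $\MA$ in $V_0$ to the ccc product $\mathbb{B}_A * \dot{\P}(\dot T)$, because such an $\MA$-filter only specializes the tree computed from an $\MA$-generic filter on $\mathbb{B}_A$ lying in $V_0$, not the actual tree $T = \dot T[G]$ over the true random generic $G$. The decisive feature is that the measure algebra, unlike Cohen forcing, adds no small non-special tree: Cohen forcing adds a Souslin tree of size $\aleph_1 < \kappa$ (which would collapse $\st$ to $\aleph_1$), whereas Laver's measure-theoretic analysis on page 531 of \cite{Laver1987} shows the measure algebra preserves the specializability of all trees of size $< \kappa$. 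Granting this, $\st \geq \kappa$ in the extension, and with the upper bound $\st = \kappa$. Finally, under the hypotheses $\aleph_1 < \kappa < \lambda$ the four computations assemble into $\non(\Null) = \aleph_1 < \st = \kappa < \cov(\Null) = \lambda$, as required.
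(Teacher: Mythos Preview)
Your overall plan matches the paper's: the computations of $\non(\Null)$ and $\cov(\Null)$ are the standard ones, the lower bound $\st \geq \kappa$ is blackboxed to Laver in both, and the upper bound $\st \leq \kappa$ is witnessed by the ground-model tree $T(S)^{V_0}$. The one real gap is in your justification that $T(S)^{V_0}$ remains non-special in $V_0[\mathbb B_\lambda]$. You assert that ``the standard pressing-down argument showing $T(S)$ is Baire goes through verbatim in $V_0[\mathbb B_\lambda]$ from the stationarity of $S$'', but it does not. In that argument one takes countable $M \prec H_\theta$ with $\delta := M \cap \omega_1 \in S$, builds a descending $M$-generic chain $p = p_0 \geq p_1 \geq \cdots$ meeting the given dense sets $D_n$, and outputs the limit node $q := \bigcup_n p_n \cup \{\delta\}$. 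When you run this in $V_0[\mathbb B_\lambda]$ against $T(S)^{V_0}$, the sets $D_n$ live in the extension, so the chain $\langle p_n\rangle$ is constructed there, and there is no reason for the limit $q$ to belong to $V_0$ --- hence no reason for $q$ to be a node of $T(S)^{V_0}$ at all. (Random forcing does add new closed bounded subsets of $S$, so $T(S)^{V_0} \subsetneq T(S)^{V_0[\mathbb B_\lambda]}$; and switching to the latter tree does not help, since it has size $\lambda$, not $\kappa$.)

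The paper avoids this by a commutativity argument rather than by rerunning the Baire proof. Since $T(S)$ is $\omega$-distributive in $V_0$, it adds no reals, so the random algebra computed after $T(S)$ equals the ground-model one and $\mathbb B_\lambda \times T(S)^{V_0}$ is the same as forcing first with $T(S)$ and then with $\mathbb B_\lambda$. That composite is $\omega$-distributive followed by ccc, hence preserves $\omega_1$; by symmetry of the product, $\mathbb B_\lambda$ forces that $\check T(\check S)$ preserves $\omega_1$, so $T(S)^{V_0}$ is not special in $V_0[\mathbb B_\lambda]$. The input you are missing is precisely the ground-model $\omega$-distributivity of $T(S)$, not merely the stationarity of $S$ in the extension.
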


A consequence of this theorem is that $\st > \cov(\Me)$ (since $\non(\Null) \geq \cov(\Me)$ in $\ZFC$) and $\st > {\rm cf}(\cc)$ (taking $\lambda$ to have e.g. cofinality $\aleph_1$) are both consistent -  two things which do not follow from the results of this paper and, in an earlier draft were claimed to be open. Since Laver's result in \cite{Laver1987} is emphatically not stated in this language we sketch a proof.

\begin{proof}
It is standard, see \cite[Model 7.6.7]{BarJu95}, that in the dual Random model (for any $\kappa$ and $\lambda$) we have $\non(\Null) = \aleph_1$ since any $\aleph_1$ many random reals are non-null. Meanwhile if $\lambda > \kappa$ is of uncountable cofinality then it is also standard (again see e.g. \cite{BarJu95}) that $\cov(\Null) = \lambda$. Finally Laver shows in \cite{Laver1987} that adding any number of random reals over a model of $\MA + \cc = \kappa$ results in a model where there are no nonspecial generalized Aronszajn trees of size ${<}\kappa$. Thus he shows that $\st \geq \kappa$. Therefore we need to show that there is a non-special tree of size $\kappa$. Work in the model of $\MA + \cc = \kappa$ and let $S\subseteq \omega_1$ be stationary co-stationary (which has size $\kappa$ and will continue to in any ccc forcing extension). In the extension by the Random reals we claim that the tree $T(S)$, as computed in the ground model which hence has size $\kappa$ is not special (that it does not have a branch follows from the fact that the stationarity of both $S$ and its complement are preserved since random forcing is ccc). Indeed we show that if $\mathbb B(\lambda)$ is the forcing to add $\lambda$ many random reals then $\forces_{\mathbb B(\lambda)}$``$\check{T}(\check{S})$ does not collapse $\omega_1$". To see why note that, since $T(S)$ adds no new reals it forces $\dot{\mathbb B}(\lambda) = \check{\mathbb B}(\lambda)$ i.e. the new measure algebra is simply the same as the ground model one. Hence $\mathbb B (\lambda) \times \check{T}(\check{S})$ is the same as first forcing with $T(S)$ and then forcing to add the random reals as defined in the extension. But this latter forcing decomposes as an $\omega$-distributive forcing followed by a ccc forcing so $\omega_1$ is preserved and in particular  $\forces_{\mathbb B(\lambda)}$``$\check{T}(\check{S})$ preserves $\omega_1$" so we are done.
\end{proof}

We finish this section with more more elementary fact about $\st$.

\begin{proposition}[Folklore]
The special tree number has uncountable cofinality.
\label{COFprop}
\end{proposition}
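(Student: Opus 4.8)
The plan is to argue by contradiction. Suppose ${\rm cf}(\st) = \omega$ and fix a strictly increasing sequence of cardinals $\langle \kappa_n : n < \omega\rangle$ with $\aleph_1 \leq \kappa_n < \st$ and $\sup_n \kappa_n = \st$ (such a choice is possible since a cardinal of countable cofinality is singular, so here $\st \geq \aleph_\omega > \aleph_1$). Let $T$ be a non-special generalized Aronszajn tree of size $\st$, which exists by the very definition of $\st$. The goal is to cover $T$ by countably many subtrees, each of which must be special by the minimality of $\st$, and then glue the resulting antichains together to specialize $T$, contradicting the choice of $T$.

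First I would carry out the decomposition. Enumerate $T = \{t_\xi : \xi < \st\}$ and, for each $n < \omega$, let $T_n \subseteq T$ be the downward closure of $\{t_\xi : \xi < \kappa_n\}$, that is $T_n = \{ s \in T : s \sqsubseteq_T t_\xi \text{ for some } \xi < \kappa_n\}$. Since each node $t_\xi$ has only countably many strict predecessors ($P_{t_\xi}$ has countable order type, as $T$ has height $\omega_1$), we obtain $|T_n| \leq \kappa_n \cdot \aleph_0 = \kappa_n < \st$. Each $T_n$ is downward closed, hence a subtree of $T$ under the inherited order $\sqsubseteq_T$, and $\bigcup_n T_n = T$ because $\sup_n \kappa_n = \st$.

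Next I would check that each $T_n$ is special. Two observations are relevant. Since $T$ has no uncountable chain and $T_n \subseteq T$, the subtree $T_n$ has no uncountable chain either. Thus if $T_n$ has height $\omega_1$ it is a generalized Aronszajn tree of size $< \st$, and so is special by the minimality of $\st$. If instead $T_n$ has countable height $\beta < \omega_1$, then it is automatically special: the map sending $s \in T_n$ to the order type of $P_s$ is injective on chains and takes values in $\beta$, so composing with any injection $\beta \to \omega$ yields a specializing function. Either way $T_n = \bigcup_m A_{n,m}$ for antichains $A_{n,m}$ of $T_n$; and since comparability in $T_n$ agrees with comparability in $T$, each $A_{n,m}$ is also an antichain of $T$. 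Hence $T = \bigcup_{n,m} A_{n,m}$ exhibits $T$ as a countable union of antichains of $T$, i.e. $T$ is special, the desired contradiction.

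The only genuinely delicate point is the step verifying that each piece $T_n$ is special, specifically the possibility that a downward-closed subtree of size $< \st$ fails to reach height $\omega_1$ and so is not literally a generalized Aronszajn tree to which the definition of $\st$ applies; this is dispatched by the observation that any tree of countable height is trivially special via its level function. Everything else (the size bound on downward closures, the inheritance of ``no uncountable chain'' to subtrees, and the agreement of the antichain relation between $T_n$ and $T$) is routine.
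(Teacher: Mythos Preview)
Your proof is correct and follows essentially the same approach as the paper's: decompose a tree of size $\st$ into countably many pieces each of size ${<}\st$, specialize each piece by minimality, and glue. The only cosmetic differences are that the paper uses a disjoint partition (with the induced subordering) rather than your increasing chain of downward-closed subtrees, and that you spell out the countable-height edge case which the paper dismisses in a parenthetical; neither changes the substance of the argument.
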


This result appears to have been known by several authors, see e.g. \cite[p.3]{Morass}. However we could not find a proof in the literature and, though easy, it seemed worth having written down.

\begin{proof}
Assume $\kappa$ has countable cofinality, $\{\kappa_n\}_{n < \omega}$ is a strictly increasing cofinal sequence of cardinals less than $\kappa$ and for all $n < \omega$ $\kappa_n < \st$. We will show that $\kappa < \st$. Fix a generalized Aronszajn tree $T$ of cardinality $\kappa$ and partition it into countably many trees $T_n$ (with the induced subordering) so that $|T_n| \leq \kappa_n$ (some of these might have countable height, this is fine). Each one is special by assumption. Let $f_n:T_n \to \omega$ be a specializing function. Since the $T_n$'s are disjoint and cover $T$ the disjoint union of the $f_n$'s is a map from $T$ to countably many disjoint copies of $\omega$. But then this is a function from $T$ to a countable set which is injective on chains and hence is a specializing function.
\end{proof}

As mentioned before, Koszimder has shown that $\st$ can be singular of uncountable cofinality.

\section{The Specializing Forcing}
Fix a generalized Aronszajn tree $T = \langle T, \sqsubseteq_T\rangle$ with root $\emptyset_T$. The point of this section is to study new reals, in fact new countable sequences of ordinals, added after forcing with $\P(T)$, the Baumgartner-Malitz-Reinhardt forcing from \cite{BMR70} to specialize $T$ with finite approximations. Concretely a condition $p \in \P(T)$ if it is a finite, partial function $p:T \to \omega$ so that if $s\sqsubseteq_T t$ and $s, t \in {\rm dom}(p)$ then $p(s) \neq p(t)$. The order is reverse inclusion. This forcing is ccc (\cite[Theorem 3]{BMR70} see also \cite[Lemma 16.19]{JechST}) and adds a specializing function for $T$. Throughout, let $G_T \subseteq \P(T)$ be $V$-generic and let $g_T:= \bigcup G$ be the generic specializing function. Also let $\dot{g}_T$ be the canonical $\P(T)$-name for $g_T$. 

\begin{definition}
Let $A \subseteq T$ be a non-empty, countable, downward closed subset of $T$. The {\em border} of $A$, denoted $\partial A$, is the set of all $s \notin A$ so that every predecessor of $s$ is in $A$ i.e. $\partial A = \{s \in T \setminus A \; | \; P_s \subseteq A\}$. In other words it is the set of minimal elements of $T$ not in $A$.
\end{definition}

For the next few lemmas fix an $A$ as above.

\begin{lemma}
For every $t \notin A$, there is a unique element of $\{s \in T \; | \; s \sqsubseteq_T t\}$ in $\partial A$.
\end{lemma}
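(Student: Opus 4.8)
The plan is to show that for $t \notin A$, the chain of predecessors of $t$ meets $\partial A$ in exactly one point. The key observation is that $\partial A$ consists precisely of the minimal elements of $T \setminus A$, and since $A$ is downward closed, the set $T \setminus A$ is \emph{upward closed}. I would therefore argue that along any branch leaving $A$, there is a first moment of exit, and this exit point is the unique border element below $t$.

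\medskip

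First I would establish \emph{existence}. Consider $t \notin A$ and let $C = \{s \in T \mid s \sqsubseteq_T t\}$, the set of predecessors of $t$ together with $t$ itself (adjusting for whether the definition of $\sqsubseteq_T$ is reflexive; I will take it to include $t$). Since $T$ is a tree, $C$ is well ordered by $\sqsubseteq_T$. Because $t \in C \setminus A$, the set $C \setminus A$ is nonempty, so by well-foundedness it has a $\sqsubseteq_T$-least element $s_0$. I claim $s_0 \in \partial A$: indeed $s_0 \notin A$ by construction, and every strict predecessor of $s_0$ lies in $C$ and is $\sqsubseteq_T$-below $s_0$, hence (by minimality of $s_0$ in $C \setminus A$) lies in $A$; thus $P_{s_0} \subseteq A$ and $s_0 \in \partial A$. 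Since $s_0 \sqsubseteq_T t$, this gives a border element below $t$.

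\medskip

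Next I would establish \emph{uniqueness}. Suppose $s_0, s_1 \in \partial A$ with both $s_0 \sqsubseteq_T t$ and $s_1 \sqsubseteq_T t$. Since the predecessors of $t$ are linearly ordered by $\sqsubseteq_T$, the elements $s_0$ and $s_1$ are comparable; say without loss of generality $s_0 \sqsubseteq_T s_1$. If $s_0 \neq s_1$, then $s_0$ is a strict predecessor of $s_1$, so $s_0 \in P_{s_1} \subseteq A$ (using $s_1 \in \partial A$). But $s_0 \in \partial A$ forces $s_0 \notin A$, a contradiction. Hence $s_0 = s_1$.

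\medskip

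I do not anticipate a serious obstacle here, as the statement is a direct unwinding of the definitions of $\partial A$ and of the tree order. The only point demanding mild care is the convention regarding whether $t$ itself is included in the set $\{s \in T \mid s \sqsubseteq_T t\}$; the argument works cleanly provided $\sqsubseteq_T$ is the (reflexive) partial order, so that $t$ is eligible to be the exit point when $t$ is itself minimal over $A$. With that convention fixed, existence reduces to taking the least element of a nonempty well-ordered set, and uniqueness reduces to linearity of chains together with downward closure of $A$.
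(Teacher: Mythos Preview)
Your proof is correct and follows essentially the same approach as the paper's: take the least element of the (nonempty, well-ordered) set of predecessors of $t$ not in $A$ for existence, and use linearity of the chain below $t$ together with the definition of $\partial A$ for uniqueness. The only cosmetic difference is that the paper handles the case $t \in \partial A$ separately rather than folding it into the well-foundedness argument via the reflexive reading of $\sqsubseteq_T$.
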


\begin{proof}
Since $t \notin A$ if all of its predecessors are in $A$ then $t \in \partial A$ so we're done. Otherwise, the set $s \in P_t$ not in $A$ is non-empty. Since this set is well-ordered there is a least such element, which must be in $\partial A$. If $s' \in P_t$ above $s$ then $s' \notin \partial A$ since it has a predecessor not in $A$, namely $s$.
\end{proof}

Given this lemma we may define, for each $t \notin A$ the {\em projection} ${\rm proj}_{\partial A} (t)$ to be this unique predecessor. Note that if $t \in \partial A$ then ${\rm proj}_{\partial A}(t) = t$.

\begin{lemma}
For every $s \in A$ there is a (usually not unique) $t \in \partial A$ so that $s \sqsubseteq_T t$.
\end{lemma}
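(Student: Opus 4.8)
The plan is to reduce the statement to the projection lemma just established, together with the single nontrivial observation that the part of the tree lying above $s$ is uncountable whereas $A$ is not. Everything else will be routine comparability bookkeeping using that $A$ is downward closed.

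First I would fix $s \in A$ and consider the subtree $T_s = \{u \in T \;|\; s \sqsubseteq_T u\}$ of all nodes extending $s$. Using normality of $T$, for every countable ordinal $\alpha$ above the level of $s$ there is a node of $T$ on ${\rm Lev}_\alpha(T)$ comparable with $s$, hence lying above $s$ and so in $T_s$. Since there are $\aleph_1$ many such levels and distinct levels contribute distinct nodes, $T_s$ is uncountable. As $A$ is countable by hypothesis, the set $T_s \setminus A$ is non-empty, so I may choose some $u$ with $s \sqsubseteq_T u$ and $u \notin A$.

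The second step is to apply the previous lemma to $u$: since $u \notin A$, there is a unique $t := {\rm proj}_{\partial A}(u) \in \partial A$ with $t \sqsubseteq_T u$. It then remains only to check $s \sqsubseteq_T t$. Both $s$ and $t$ lie $\sqsubseteq_T u$, hence they are comparable, being members of the chain $P_u \cup \{u\}$ which is well ordered by $\sqsubseteq_T$. If we had $t \sqsubset_T s$ (or $t = s$), then from $s \in A$ together with downward closure of $A$ we would conclude $t \in A$, contradicting $t \in \partial A \subseteq T \setminus A$; therefore $s \sqsubseteq_T t$ (in fact $s \sqsubset_T t$), which is exactly what is required.

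The only genuine content here — and the single place where the generalized Aronszajn hypothesis enters, via height $\omega_1$ and normality — is the non-emptiness of $T_s \setminus A$; the rest is bookkeeping with comparability and downward closure. I expect that step to be the sole (very mild) obstacle. As an alternative that avoids citing the projection lemma, one could instead take a $\sqsubseteq_T$-minimal element $t$ of $T_s \setminus A$, which exists because trees are well founded (each $P_t$ is well ordered, so there are no infinite descending chains), and verify directly that every strict predecessor of $t$ is comparable with $s$ and hence, by minimality or downward closure, lies in $A$, so that $P_t \subseteq A$ and $t \in \partial A$. I would present the projection-lemma route as the primary argument since it is shorter and reuses the machinery already in place.
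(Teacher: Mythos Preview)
Your proposal is correct, and your alternative route---choosing a $\sqsubseteq_T$-minimal element of $T_s \setminus A$ and checking directly that it lies in $\partial A$---is exactly the argument the paper gives. Your primary route is a minor repackaging of the same idea: rather than taking a minimal element of $T_s\setminus A$ yourself, you hand an arbitrary $u\in T_s\setminus A$ to the projection lemma (which internally takes that minimum) and then do a short comparability check to see that the output sits above $s$; this buys you a slightly cleaner citation at the cost of one extra paragraph, so the two presentations are essentially interchangeable.
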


\begin{proof}
Fix $s \in A$. Since $T$ is normal and $A$ is countable, there is an $\alpha \in \omega_1$ so that $A \cap {\rm Lev}_\alpha(T) = \emptyset$ and $s$ is comparable with some $s' \in {\rm Lev}_\alpha(T)$. In particular the collection of $t$ above $s$ not in $A$ is non-empty. Since $T$ is well-founded this set therefore has minimal elements and any such minimal element is in $\partial A$ by definition.
\end{proof}

All of these simple observations and definitions are in the service of defining the following forcing notion, which is key.

\begin{definition}[The forcing notion $\Q_A$]
For $A$ as above define $\Q_A$ to be the forcing notion consisting of pairs $(s, \pi)$ ($\pi$ for {\em promise}) so that the following hold.
\begin{enumerate}
\item
$s \in \P(T)$ with ${\rm dom}(s) \subseteq A$.
\item
$\pi$ is a finite partial function from $\partial A$ to $[\omega]^{<\omega}$
\item
If $t \in {\rm dom}(p)$, $t' \in {\rm dom}(\pi)$, $t\sqsubseteq_T t'$ and $n \in \pi(t')$ then $s(t) \neq n$.
\end{enumerate}
The extension relation is defined by $(s, \pi) \leq (r, \tau)$ if and only if $s \supseteq r$, ${\rm dom}(\pi) \supseteq {\rm dom}(\tau)$ and for every $t \in {\rm dom}(\tau)$ we have $\pi(t) \supseteq \tau(t)$.
\end{definition}

The intuition for this definition is that $\Q_A$ is a subforcing of $\P(T)$ adding $g_T \hook A$. The point is that the second coordinate consists of the information relevant to $g_T \hook A$ given by conditions $p \in G_T$ with ${\rm dom}(p) \nsubseteq A$. To see this, observe that if $p$ is a condition in $\P(T)$ with domain not contained in from $A$, say $p(t) = n$ for some $t \in {\rm dom}(p)\setminus A$ and $n < \omega$, then the ramification for $g_T\hook A$ is that for every $t' \sqsubseteq_T t$ in $A$, we have $p \forces \dot{g}_T(\check{t}') \neq \check{n}$. This is what the promises give us. All of this is more formally expressed by the next lemma.

\begin{lemma}
Let $A$ be a countable, downward closed subset of $T$.
\begin{enumerate}
\item
$\Q_A$ is $\sigma$-centered.
\item
If $H \subseteq \Q_A$ is $V$-generic then $h:=\bigcup_{(s, \pi) \in H} s$ is a specializing function on $A$ in the sense that $h$ maps $A$ to $\omega$ and is injective on chains. Moreover if $b \subseteq A$ is linearly ordered and there is a $t \in T$ greater than every $t' \in b$ there are infinitely many $n \notin {\rm range}(h \hook b)$.
\item
In $V[h]$ define $\mathbb R(h)$ the be the set of finite partial functions $p:T\setminus A \to \omega$ so that $p \cup h$ is a partial specializing function. Let $\dot{\mathbb R}(\dot{h})$ be the $\Q_A$-name for this partial order. Then $\P(T)$ densely embeds into $\Q_A * \dot{\mathbb R}(\dot{h})$ and $g_T \hook A=h$ is the union of the first coordinates of the generic set added by $\Q_A$.
\end{enumerate}
\end{lemma}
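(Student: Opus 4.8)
The plan is to treat the three parts in order, since part (3) rests on the structural picture set up by (1) and (2). For part (1) I would partition $\Q_A$ according to the first coordinate: as $A$ is countable there are only countably many possible values of $s$ (finitely many nodes of a countable set, coloured by $\omega$). I would then check that for fixed $s$ the set $C_s=\{(s,\pi)\in\Q_A\}$ is centered, indeed directed: given $(s,\pi_1),\dots,(s,\pi_k)$ the pair $(s,\pi)$ with $\pi(t)=\bigcup_i\pi_i(t)$ lies in $\Q_A$ (clause (3) is inherited from each $\pi_i$, since any colour witnessing a violation already sits in some $\pi_i$) and extends them all. Hence $\Q_A=\bigcup_sC_s$ witnesses $\sigma$-centeredness.

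For part (2), totality of $h$ on $A$ and injectivity on chains are density and directedness arguments: given $a\in A$ and a condition $(s,\pi)$ with $a\notin{\rm dom}(s)$, only finitely many colours are forbidden for $s(a)$, namely those $s$ already uses on the chain through $a$ and those occurring in $\pi(t')$ for the finitely many $t'\in{\rm dom}(\pi)$ with $a\sqsubseteq_T t'$; so a legal value exists and the extension stays in $\Q_A$, while injectivity is immediate from directedness as each first coordinate is injective on chains. For the ``moreover'', given a chain $b\subseteq A$ bounded above by some $t\in T$, I would first locate a single border point $t^*\in\partial A$ above all of $b$, using the projection lemma when $t\notin A$ and the second border lemma when $t\in A$. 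Then for each $m$ the set of conditions placing some colour $n\ge m$ into $\pi(t^*)$ is dense: extend $\pi$ at $t^*$ by any $n$ exceeding $m$ and the finitely many values $s$ uses below $t^*$, which keeps clause (3) intact; and once $n\in\pi(t^*)$ is in the generic, clause (3) permanently bars $n$ from ${\rm range}(h\hook b)$. Genericity then yields infinitely many omitted colours.

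For part (3) I would define $e:\P(T)\to\Q_A*\dot{\mathbb R}(\dot h)$ by splitting $p=(p\hook A)\cup(p\hook(T\setminus A))$ and setting $e(p)=((s,\pi),\check r)$ with $s=p\hook A$, $r=p\hook(T\setminus A)$, and $\pi$ recording the cross-constraints: for each $t\in{\rm dom}(r)$ put $p(t)$ into $\pi({\rm proj}_{\partial A}(t))$. Clause (3) holds exactly because $p$ is injective on chains, and $(s,\pi)\Vdash\check r\in\mathbb R(\dot h)$ for the same reason: every predecessor of $t$ lying in $A$ lies below ${\rm proj}_{\partial A}(t)$, so the promise forbids $h$ from clashing with $r(t)$ there, while downward closure of $A$ rules out any other $A$--$r$ comparability. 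Order preservation is routine, and $e$ preserves and reflects compatibility: a common refinement $p\cup p'$ maps below both images, whereas a clash between $p$ and $p'$ surfaces either as incompatible first coordinates in $\Q_A$ (clash inside $A$), as a clash already present in $\mathbb R$ (clash outside $A$), or --- the crucial case --- as a violation of clause (3) forced by the promises, when one endpoint lies in $A$ and the other does not, so the lower colour sits in $s$ and the upper colour in the matching promise. The identity $g_T\hook A=h$ then falls out, since the first coordinate of $e(p)$ is precisely $p\hook A$.

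The hard part will be density of the range. Given an arbitrary $((s,\pi),\dot r)$ I would first strengthen to some $(s',\pi')$ deciding $\dot r=\check r$ for a ground-model finite $r$, which is legitimate because $\mathbb R(\dot h)$ always consists of ground-model finite partial functions. To build $p$ with $e(p)\le((s',\pi'),\check r)$ I must enlarge $r$ to some $r^+$ whose promise dominates $\pi'$: for each $t^*\in{\rm dom}(\pi')$ and each colour $m\in\pi'(t^*)$ I adjoin a node above $t^*$ coloured $m$. Here the structural facts carry the argument: distinct border points are pairwise incomparable, being minimal in $T\setminus A$, and the up-sets above them never interact, so witnessing nodes for different $t^*$ create no chain conflicts; within one up-set I thread the finitely many distinct colours along a single chain furnished by normality; and clause (3), together with the forced avoidance of these colours by $h$ below $t^*$, guarantees both that $s'\cup r^+$ is injective on chains and that $(s',\pi_{r^+})\Vdash\check r^+\in\mathbb R(\dot h)$. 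The only delicate bookkeeping is steering the witnessing chains clear of the finitely many nodes already in ${\rm dom}(r)$, which the height of $T$ and normality make possible. I expect this promise-realization step to be the one demanding genuine care.
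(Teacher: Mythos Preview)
Your proposal is correct and follows essentially the same route as the paper. For part (1) the paper also partitions by stem and takes the union of the promises; for part (2) the paper simply says ``a straightforward density argument'', which is exactly what you carry out.

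For part (3) the two arguments are really the same, just packaged differently. The paper singles out the dense set $D\subseteq \Q_A*\dot{\mathbb R}(\dot h)$ of conditions $(s,\pi,p)$ in which $\pi$ is \emph{exactly} the promise generated by $p$ (every $p(t)$ is recorded in $\pi$ at the projection of $t$, and conversely every entry of $\pi$ is witnessed by some node of $p$), and then observes that $p\mapsto (p\hook A,\,p\hook(T\setminus A))$ is an isomorphism $\P(T)\to D$. Your map $e$ has image precisely this $D$, and your density step---given $(s',\pi',r)$, realize each unwitnessed pair $(t^*,m)\in\pi'$ by adjoining a node above $t^*$ coloured $m$, using incomparability of distinct border points and normality to avoid conflicts---is exactly the paper's ``add a $t_r\notin{\rm dom}(p)$ whose projection is $r$ and assign this node value $n$'', only with the bookkeeping (avoiding ${\rm dom}(r)$, threading distinct colours on a chain, checking clause (3) against $s'$) spelled out more carefully than the paper does.
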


For simplicity in what follows if $(s, \pi) \in \Q_A$ call $s$ the {\em stem} of the condition.

\begin{proof}
1. Fix a stem $s$ and let $\pi$ and $\tau$ be so that $(s,\pi)$ and $(s, \tau)$ are both conditions in $\Q_A$. Let $\pi \cup \tau:\partial A \to [\omega]^{<\omega}$ defined by $(\pi \cup \tau) (t) = \pi(t) \cup \tau (t)$ for $t \in {\rm dom}(\pi) \cup {\rm dom}(\tau)$ with $\pi(t) = \emptyset$ for $t \in {\rm dom}(\tau) \setminus {\rm dom}(\pi)$ and idem for $\tau(t)$. It's easy to check that $(s, \pi \cup \tau)$ is a condition in $\Q_A$ extending both $(s, \pi)$ and $(s, \tau)$ so any two conditions with the same stem are compatible. Since there are only countably many stems this proves this first part.

2. This is a straightforward density argument.

3. First note that every element of $\dot{\mathbb R}(\dot{h})$ evaluates to a ground model element (since it is a finite partial function between two ground model sets) so by strengthening if necessary we can decide a check name for every condition in $\dot{\mathbb R}(\dot{h})$ hence $\Q_A * \dot{\mathbb R}(\dot{h})$ has a dense subset of the form $(s, \pi, \check{p})$ with $p \in \P(T)$ and ${\rm dom}(p) \subseteq T\setminus A$. From now on we drop the check on top of $p$. Now there is a further dense subset so that the following hold:

\begin{enumerate}
\item
For every $t \in {\rm dom}(p)$ we have ${\rm proj}_{\partial A} (t) \in {\rm dom}(\pi)$ and $p(t) \in \pi({\rm proj}_{\partial A}(t))$ 
\item
For every $r \in {\rm dom}(\pi)$ and every $n\in \pi(r)$ then there is a $t_r \in {\rm dom}(p)$ with ${\rm proj}_{\partial A} (t_r) = r$ and $p(t_r) = n$
\end{enumerate}

To see this, begin with any $(s, \pi, p) \in \Q_A * \dot{\mathbb R}(\dot{h})$ and first, for every $t \in {\rm dom}(p)$ add $({\rm proj}_{\partial A}(t), p(t))$ to $\pi$. Then for every $(r, n) \in \pi$ if there is not already some $t_r \in {\rm dom}(p)$ with $p(t_r) = n$ then we are done otherwise no node in the domain of $p$ projects to $r$ and has value $n$ under $p$ so we can add a $t_r \notin {\rm dom}(p)$ whose projection is $r$ and assign this node value $n$.  The fact that everything is finite, along with the fact that $\pi(r) \ni n$ precludes $s(t) = n$ for any $t$ below $r$ allows this to work. 

Let $D$ be this dense set of conditions. To summarize, $(s, \pi, p) \in D$ if and only if ${\rm dom}(\pi) = \{{\rm proj}_{\partial A}(t) \; | \; t \in {\rm dom}(p)\}$ and for each $t \in {\rm dom}(\pi)$ we have that $\pi(t) = \{n \; | \; \exists s \in {\rm dom}(p) (t \sqsubseteq_T s \, {\rm and} \, p(s) = n)\}$. We claim that $\P(T)$ is isomorphic to $D$. Indeed, by construction we can read off $\pi$ from $p$ so we can drop the middle coordinate at which point it becomes clear that $p \mapsto (p \hook A, p \hook T\setminus A)$ is the desired isomorphism.
\end{proof}

The point of all of this is the following. Suppose $\dot{x}$ is a $\P(T)$-name for a countable sequence of ordinals. For each $n < \omega$ let $A_n$ be a maximal antichain deciding $\dot{x}(\check{n})$. Note that $A_n$ is countable for all $n < \omega$ by the ccc. Let $A_{\dot{x}}$ be the downward closure of $\bigcup \{{\rm dom}(p) \; | \; p \in \bigcup_{n < \omega} A_n\}$. Observe that all of $\dot{x}$ is decided by $g_T \hook A_{\dot{x}}$: for each $n < \omega$ in $V[g_T]$ there must be a $p \in A_n$ so that $p \subseteq g_T \hook A_{\dot{x}}$ since each $A_n$ is a maximal antichain and the union of the domains of all its elements are contained in $A$. Thus $\dot{x}$ was added by $\Q_{A_{\dot{x}}}$. In particular we have the following.

\begin{theorem}
Every countable set of ordinals added by $\P(T)$ is in a $\sigma$-centered subextension.
\label{sigmacentered}
\end{theorem}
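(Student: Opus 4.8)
The plan is to make rigorous the heuristic sketched in the paragraph preceding the statement by assembling the three parts of the previous Lemma. Fix a $\P(T)$-name $\dot{x}$ for a countable sequence of ordinals (a countable set of ordinals is coded by such a sequence, so this is no loss). For each $n < \omega$ I would invoke the ccc to choose a \emph{countable} maximal antichain $A_n \subseteq \P(T)$ deciding the value $\dot{x}(\check n)$, and then set $A_{\dot{x}}$ to be the downward closure in $T$ of $\bigcup\{{\rm dom}(p) \mid p \in \bigcup_{n<\omega} A_n\}$. Because each $A_n$ is countable and every condition has finite domain, $A_{\dot{x}}$ is a countable, downward closed subset of $T$, so it is an admissible index for $\Q_{A_{\dot{x}}}$ and the machinery of the previous Lemma applies to it.

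The core of the argument is to check that the value $x := \dot{x}^{G_T}$ can be computed from $h := g_T \hook A_{\dot{x}}$ alone. First I would record the equivalence $p \subseteq g_T \iff p \in G_T$ for $p \in \P(T)$: the reverse direction is immediate from $g_T = \bigcup G_T$, and the forward direction holds because $G_T$ is directed and $p$ is finite, so finitely many conditions of $G_T$ realizing the pairs of $p$ amalgamate to a single $q \le p$ in $G_T$. Now fix $n$. Since $A_n$ is a maximal antichain there is a unique $p \in A_n \cap G_T$; as ${\rm dom}(p) \subseteq A_{\dot{x}}$ by construction, we get $p = g_T \hook {\rm dom}(p) = h \hook {\rm dom}(p) \subseteq h$, and moreover $p$ is the \emph{only} member of $A_n$ contained in $h$ (two such would both lie in $G_T$, contradicting that $A_n$ is an antichain). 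Hence, working in $V[h]$, one recovers $p$ as the unique element of $A_n$ with $p \subseteq h$ and reads off $\dot{x}(\check n)$ from the value $p$ decides. Doing this uniformly in $n$ shows $x \in V[h]$.

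It then remains to place $h$ inside a $\sigma$-centered subextension. By part (3) of the previous Lemma, $\P(T)$ densely embeds into $\Q_{A_{\dot{x}}} * \dot{\mathbb R}(\dot{h})$, so forcing with $\P(T)$ factors as a $\Q_{A_{\dot{x}}}$-generic extension $V[H]$ followed by an $\dot{\mathbb R}(\dot{h})$-extension, and $h = g_T \hook A_{\dot{x}}$ is exactly the union of the stems of $H$, whence $h \in V[H]$. Therefore $x \in V[h] \subseteq V[H]$, and by part (1) the forcing $\Q_{A_{\dot{x}}}$ is $\sigma$-centered, so $V[H]$ is the desired $\sigma$-centered subextension containing $x$.

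The step I expect to require the most care is the recovery of $x$ from $h$, i.e. confirming that $A_{\dot{x}}$ genuinely \emph{absorbs} all the information deciding $\dot{x}$. This rests on two points that must be matched precisely: that the domains of all conditions in the $A_n$ are swept into $A_{\dot{x}}$, so the generic element of each $A_n$ sits below $h$; and that $h$ determines $G_T \cap A_n$ uniquely, via the equivalence $p \subseteq g_T \iff p \in G_T$ applied to conditions supported on $A_{\dot{x}}$. The downward closure in the definition of $A_{\dot{x}}$ is exactly what keeps the index admissible for $\Q_{A_{\dot{x}}}$ while still capturing every relevant condition.
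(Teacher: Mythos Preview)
Your proposal is correct and follows essentially the same approach as the paper: choose countable maximal antichains deciding each value of $\dot{x}$, let $A_{\dot{x}}$ be the downward closure of the union of their domains, observe that $g_T \hook A_{\dot{x}}$ determines $\dot{x}$, and invoke the previous Lemma to conclude $\dot{x}$ lies in the $\sigma$-centered $\Q_{A_{\dot{x}}}$-subextension. If anything, you have filled in more detail than the paper does---in particular the explicit use of part (3) of the Lemma to formally factor $\P(T)$ and place $h$ inside $V[H]$, and the verification that $p \subseteq g_T$ iff $p \in G_T$---whereas the paper simply asserts that $\dot{x}$ is added by $\Q_{A_{\dot{x}}}$.
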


A remark about this theorem is in order.

\begin{remark}
In general whether or not $\P(T)$ itself is $\sigma$-centered depends on the tree and the ambient set theory. For instance, if $T$ is Souslin or, more generally, Baire, then $\P(T)$ is not $\sigma$-centered since it is not absolutely ccc. Conversely, under $\MA$ $\P(T)$ will be $\sigma$-centered for any tree $T$ of cardinality ${<}2^{\aleph_0}$. This is because $\MA$ implies all ccc forcing notions of size less than continuum are $\sigma$-centered, see \cite[Lemma III.3.46]{KenST}. The following question appears to be open however and would be interesting to explore.  

\end{remark}

\begin{question}
Is there a combinatorial condition on $T$ which is equivalent to $\P(T)$ being $\sigma$-centered? For instance, is it possible that $\P(T)$ is $\sigma$-centered if and only if $T$ is special?
\end{question} 

A corollary of this theorem is an alternative proof of one of the main consequences of \cite[Corollary 3.3]{ChoZa15}. Recall that a forcing notion $\P$ is $\sqsubseteq^{\rm Random}$-{\em good} if for every sufficiently large $\theta$ and every countable $M \prec H_\theta$ containing $\P$ if $x \in 2^\omega$ is random over $M$ then $\forces_\P$ ``$x$ is random over $M[\dot{G}]$", see \cite[Chapter 6]{BarJu95}. Clearly if $\P$ is $\sqsubseteq^{\rm Random}$-good then it adds no random reals and hence the $G_\delta$ null sets coded in the ground model are a null covering family in any generic extension by $\P$. It's well know that $\sigma$-centered forcing notions are $\sqsubseteq^{\rm Random}$-good \cite[Theorem 6.5.30]{BarJu95} and finite support iterations of ccc $\sqsubseteq^{\rm Random}$-good forcing notions do not add random reals, \cite[Theorem 6.5.29]{BarJu95}.

\begin{corollary}[Chodounsk\'{y}-Zapletal \cite{ChoZa15}]
\begin{enumerate}
\item
For any generalized Aronszajn tree $T$ the forcing $\P(T)$ is $\sqsubseteq^{\rm Random}$-good and hence adds no random reals, and the set of ground model Borel codes for $G_\delta$ null sets form a covering family.
\item
Let $\gamma$ be an ordinal and $\langle (\P_\alpha, \dot{\Q}_\alpha) \; | \; \alpha < \gamma\rangle$ be a finite support iteration of forcing notions so that for all $\alpha$ $\forces_\alpha$``$\dot{\Q}_\alpha$ is of the form $\P(\dot{T})$ for some generalized Aronszajn tree $\dot{T}$". If $G\subseteq \P_\gamma$ is generic over $V$ then in $V[G]$ we have ${\rm cov}(\Null) = \aleph_1$ and indeed the ground model $G_\delta$ null sets form a covering family.
\end{enumerate}
\end{corollary}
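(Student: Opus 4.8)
The plan is to derive both parts from Theorem \ref{sigmacentered} together with the two facts recalled immediately before the corollary: that $\sigma$-centered forcings are $\sqsubseteq^{\rm Random}$-good, and that finite support iterations of ccc $\sqsubseteq^{\rm Random}$-good forcings add no random reals. The slogan is that $\sqsubseteq^{\rm Random}$-goodness is a property that need only be tested against names for null Borel sets, i.e.\ against names for countable sequences of ordinals, and Theorem \ref{sigmacentered} localizes every such name to a $\sigma$-centered complete subforcing of $\P(T)$.

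For part (1), I would fix a sufficiently large $\theta$, a countable $M \prec H_\theta$ with $\P(T) \in M$, and a real $x \in 2^\omega$ random over $M$; let $G \subseteq \P(T)$ be generic over $V$, so that I must show $x$ is random over $M[G]$. Since a real is random over a model exactly when it avoids every null Borel set coded there, and since by elementarity every null $G_\delta$ code in $M[G]$ has the form $\dot c[G]$ for some $\P(T)$-name $\dot c \in M$, it suffices to show $\P(T)$ forces $x \notin B_{\dot c[\dot G]}$ for each such $\dot c$. Now $\dot c$ names a countable sequence of ordinals, so Theorem \ref{sigmacentered} (and part (3) of the lemma preceding it) yields a countable downward closed $A = A_{\dot c} \subseteq T$ with $\Q_A$ a $\sigma$-centered complete subforcing of $\P(T)$ through which $\dot c[G]$ is computed; by elementarity I may take $A,\Q_A \in M$ and the relevant $\Q_A$-name in $M$. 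Then $H := G \cap \Q_A$ is $\Q_A$-generic over $V$, hence over $M$, and $c := \dot c[G] \in M[H]$.

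Because $\Q_A$ is $\sigma$-centered it is $\sqsubseteq^{\rm Random}$-good, so from $x$ random over $M$ I get $x$ random over $M[H]$; in particular $x$ avoids the null set $B_c$ coded by $c \in M[H]$. As $c$ was an arbitrary null code from $M[G]$, this establishes goodness of $\P(T)$, and the ``hence'' clauses follow from the general remarks preceding the corollary. For part (2), the iteration $\P_\gamma$ is a finite support iteration of (forced) ccc forcings and hence itself ccc, preserving $\omega_1$. Since part (1) is a theorem of $\ZFC$ it holds in each intermediate model $V[G_\alpha]$, so $\forces_\alpha$``$\dot\Q_\alpha = \P(\dot T)$ is ccc and $\sqsubseteq^{\rm Random}$-good''. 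Applying \cite[Theorem 6.5.29]{BarJu95} gives that $\P_\gamma$ adds no random real over $V$; equivalently, every real of $V[G]$ lies in some $V$-coded null $G_\delta$ set, i.e.\ these form a covering family in $V[G]$. Assuming, as in the application to the main theorem, that $V \models \CH$ so $(2^{\aleph_0})^V = \aleph_1$, this covering family has size $\aleph_1$, whence $\cov(\Null)^{V[G]} \leq \aleph_1$; with the $\ZFC$ bound $\aleph_1 \leq \cov(\Null)$ this gives $\cov(\Null) = \aleph_1$ in $V[G]$.

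The only genuinely delicate point is part (1): bridging the gap between the fact that $\P(T)$ need not be $\sigma$-centered (as flagged in the Remark) and a goodness statement that a priori concerns all of $\P(T)$. The crux is the reduction to names for countable sequences of ordinals followed by the localization from Theorem \ref{sigmacentered}. The bookkeeping that requires care is keeping $M$, $A_{\dot c}$, and the $\Q_A$-name simultaneously inside the elementary submodel, and using that $\Q_A$ is a \emph{complete} subforcing of $\P(T)$ so that $G \cap \Q_A$ is genuinely $\Q_A$-generic; once this is arranged, the $\sigma$-centered case of goodness carries the whole argument.
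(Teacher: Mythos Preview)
Your proposal is correct and follows essentially the same line as the paper's proof: for (1) you localize a name for a null Borel code to a $\sigma$-centered complete subforcing via Theorem \ref{sigmacentered} and then invoke $\sqsubseteq^{\rm Random}$-goodness of $\sigma$-centered forcing, and for (2) you appeal to the iteration theorem \cite[Theorem 6.5.29]{BarJu95}. Your write-up is in fact more careful than the paper's in two places: you make explicit that $\Q_A$ is a complete subforcing (so that $G\cap\Q_A$ is genuinely generic), and you flag that the conclusion $\cov(\Null)=\aleph_1$ in (2) uses $\CH$ in $V$, an assumption the corollary does not state but which is in force in the intended application.
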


\begin{proof}
Note that 2. follows from 1. by the iteration theorem \cite[Theorem 6.5.29]{BarJu95}. For 1., fix a generalized Aronszajn tree $T$, let $\theta$ be sufficiently large and let $T, \P(T) \in M \prec H_\theta$ with $M$ countable. Let $x \in 2^{\omega}$ be random over $M$. If there is a condition $p \in \P(T)$ forcing that $x$ is not random over $M[\dot{G}]$ then this is because there is a $\P(T)$-name for a Borel code of a null set $\dot{A} \in M$ and $p \forces x \in \dot{A}$. But $\dot{A}$ is added by a $\sigma$-centered forcing notion $\Q_{\dot{A}} \in M$ so $x$ cannot be forced to be in this set.
\end{proof}

\begin{remark}
In our opinion the above proof and the aforementioned proof of Chodounsk\'{y} and Zapletal in \cite{ChoZa15} complement one another as each gives something the other does not. In \cite{ChoZa15} they prove that $\P(T)$ satisfies a condition they term Y-.c.c. and show that Y-.c.c. forcing notions do not add random reals, even when iterated. Every $\sigma$-centered forcing is Y-.c.c. though not every Y-.c.c. forcing is necessarily $\sigma$-centered. Our proof complements this since it shows that nonetheless every real added by $\P(T)$ is already added by a $\sigma$-centered forcing. It bears asking whether this property of $\P(T)$ is actually true more generally of Y-c.c. forcing notions.
\end{remark}

\begin{question}
If $\P$ is Y-c.c. and $\dot{x}$ is a $\P$-name for a countable sequence of ordinals is there a $\sigma$-centered subforcing $\Q_{\dot{x}}$ of $\P$ which adds $\dot{x}$?
\end{question}

Another interesting corollary of Theorem \ref{sigmacentered} is that if $T$ has countable levels then $\partial A_{\dot{x}}$ is countable and hence $\Q_{A_{\dot{x}}}$ is countable. As a result we get the following.
\begin{corollary}
If $T$ has countable levels then every countable set of ordinals added by $\P(T)$ is in a Cohen subextension.
\end{corollary}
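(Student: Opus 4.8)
The plan is to run exactly the argument behind Theorem~\ref{sigmacentered}, but to observe that when $T$ has countable levels the witnessing $\sigma$-centered forcing $\Q_{A_{\dot x}}$ is in fact \emph{countable}, and that a countable atomless forcing is forcing equivalent to Cohen forcing. So the whole corollary should reduce to a level-counting argument plus one standard fact.

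First I would fix a $\P(T)$-name $\dot x$ for a countable sequence of ordinals and recall, from the paragraph preceding Theorem~\ref{sigmacentered}, the countable, downward closed set $A_{\dot x} \subseteq T$: it is the downward closure of the (countably many, by the ccc) domains of conditions lying in maximal antichains deciding each $\dot x(\check n)$, and $\dot x$ is added by $\Q_{A_{\dot x}}$. By part (3) of the structural Lemma for $\Q_A$, this forcing densely embeds into $\P(T)$, so the induced generic filter $H \subseteq \Q_{A_{\dot x}}$ lies in $V[G_T]$ and the intermediate model $V[H]$ satisfies $V \subseteq V[H] \subseteq V[G_T]$ with $x \in V[H]$. (A density argument even shows $V[H]=V[h]$ for $h := g_T \hook A_{\dot x}$, since the union of the promises at a border node $t$ is forced to equal $\omega \setminus {\rm range}(h \hook P_t)$, but this is only a clarifying remark.) It therefore suffices to show that $\Q_{A_{\dot x}}$ is forcing equivalent to $2^{<\omega}$.

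The crux is that $\partial A_{\dot x}$ is countable. Since $A_{\dot x}$ is a countable, downward closed subset of $T$ it meets only countably many levels, so $A_{\dot x} \subseteq \bigcup_{\beta < \alpha} {\rm Lev}_\beta(T)$ for some $\alpha < \omega_1$. Any $s \in \partial A_{\dot x}$ has $P_s \subseteq A_{\dot x}$, so all predecessors of $s$ lie at levels ${<}\alpha$ and hence $s$ itself lies at a level ${\leq}\alpha$; thus $\partial A_{\dot x} \subseteq \bigcup_{\beta \leq \alpha} {\rm Lev}_\beta(T)$. As $T$ has countable levels and $\alpha$ is countable, this is a countable union of countable sets, so $\partial A_{\dot x}$ is countable. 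Consequently a condition $(s,\pi) \in \Q_{A_{\dot x}}$ is a pair of a finite partial function $s$ from the countable set $A_{\dot x}$ to $\omega$ and a finite partial function $\pi$ from the countable set $\partial A_{\dot x}$ to $[\omega]^{<\omega}$, of which there are only countably many; hence $\Q_{A_{\dot x}}$ is countable.

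Finally I would invoke the classical fact that a countable, atomless, separative forcing notion is forcing equivalent to $2^{<\omega}$. Atomlessness is immediate in the only nontrivial case $x \notin V$: a finite forcing adds no new sequence of ordinals, so $A_{\dot x}$ is infinite, and below any $(s,\pi)$ I can pick $t \in A_{\dot x} \setminus {\rm dom}(s)$ and assign it two distinct values avoiding the finitely many values forbidden by $\pi$ and by chains through ${\rm dom}(s)$, obtaining two incompatible extensions. Passing to the separative quotient if necessary, $\Q_{A_{\dot x}}$ has the Cohen algebra as its Boolean completion, so $V[H]$ is a Cohen extension of $V$ containing $x$ (and if $x \in V$ the claim is trivial, as $\P(T)$ itself adds a Cohen real by the very same argument). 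I do not expect a genuine obstacle: the only step with real content is the level count giving $\partial A_{\dot x}$ countable, and everything else is bookkeeping or the standard ``countable atomless $=$ Cohen'' lemma.
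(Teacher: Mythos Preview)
Your argument is correct and follows exactly the approach the paper intends: the paper's entire proof is the one-line observation preceding the corollary that countable levels make $\partial A_{\dot x}$ countable and hence $\Q_{A_{\dot x}}$ countable, and you have faithfully unpacked this (including the level-count bound $\partial A_{\dot x} \subseteq \bigcup_{\beta \leq \alpha} {\rm Lev}_\beta(T)$ and the ``countable atomless $=$ Cohen'' step). One minor wording issue: part (3) of the lemma says $\P(T)$ densely embeds into $\Q_A * \dot{\mathbb R}(\dot h)$, not that $\Q_A$ densely embeds into $\P(T)$, but your conclusion that the $\Q_A$-generic lies in $V[G_T]$ and yields an intermediate Cohen extension containing $x$ is the right one.
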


I do not know what happens when the tree has uncountable levels.

\begin{question}
If $T$ has uncountable levels is every new countable set of ordinals in a Cohen subextension?
\end{question}

Based on the results of the next section I conjecture that in fact this is the case.

\section{The Special Tree Model}
Now we look at iterating forcing notions of the form $\P(T)$ with an eye towards proving Main Theorem \ref{mainthm1}. First we consider the simpler case where we aim to force $\st = \cc = \kappa$ for a regular cardinal $\kappa$. The techniques used here encompass the main technical contributions of this paper. The more general case described in Main Theorem \ref{mainthm1} where $\st$ and $\cc$ are allowed to be different and singular requires simply adapting the arguments from the regular case to the models constructed in \cite[Theorems 46 \& 47]{Morass}.

\subsection{The Regular Case}
Fix a regular cardinal $\kappa$. We describe what we mean by the {\em special tree model} (of length $\kappa$). Assume $\GCH$ in $V$. Let $T_0$ be a generalized Aronszajn tree, let $\P_0$ be the trivial forcing and let $\P_1$ be $\P(T_0)$ which, formally, we can treat as $\P_0 * \check{\P}(\check{T}_0)$ (so $\dot{\Q}_0$ is this check name). Inductively let $\dot{T}_\alpha$ be a $\P_\alpha$-name for a generalized Aronszajn tree chosen by some suitable bookkeeping and let $\dot{\Q}_\alpha$ be a $\P_\alpha$-name forced to be $\P(\dot{T}_\alpha)$. Let $\dot{g}_\alpha$ be the $\P_{\alpha+1}$-name for the generic specializing function added by $\dot{\Q}_\alpha$. For any $\alpha \leq \beta < \kappa$ let $\P_{\alpha, \beta}$ denote the quotient forcing $\P_\beta / \P_\alpha$. Without loss of generality we assume that each tree has universe a set of ordinals below $\kappa$. The special tree model is now simply $V[G_\kappa] = V[\langle \dot{g}_\alpha^{G_\kappa} \; | \; \alpha < \kappa\rangle]$ for $G_\kappa \subseteq\P_\kappa$ generic over $V$. For the rest of this subsection we fix all of these objects.

\begin{lemma}
In $V[G_\kappa]$ we have $\kappa^{<\kappa} = \kappa = \st = {\rm cov}(\Me) = 2^{\aleph_0}$.
\label{easy}
\end{lemma}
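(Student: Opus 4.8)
The plan is to verify the four equalities separately, treating the cardinal arithmetic as routine bookkeeping about the ccc iteration and isolating the genuine content in the computation of $\st$. Since each iterand $\dot{\Q}_\alpha = \P(\dot{T}_\alpha)$ is forced to be ccc (the underlying $\dot{T}_\alpha$ being a generalized Aronszajn tree), the finite support iteration $\P_\kappa$ is ccc and hence preserves all cardinals and cofinalities; in particular $\kappa$ remains regular in $V[G_\kappa]$. Each condition in $\P_\kappa$ has finite support and the iterands have universe a set of ordinals below $\kappa$, so $|\P_\kappa| = \kappa$. Under $\GCH$ in $V$ we have $\kappa^{<\kappa} = \kappa$, and a standard nice-name count shows that the number of $\P_\kappa$-names for reals, and more generally for functions $\lambda \to \kappa$ with $\lambda < \kappa$, is at most $\kappa^{\aleph_0} \cdot \kappa^\lambda = \kappa$. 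This yields both $2^{\aleph_0} \leq \kappa$ and $\kappa^{<\kappa} \leq \kappa$ in $V[G_\kappa]$, and the reverse inequalities are immediate, giving $\kappa^{<\kappa} = \kappa$.

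For $\cov(\Me) = \kappa$ I would use the classical fact that a finite support iteration of nontrivial ccc forcing adds a Cohen real over the intermediate model at every limit stage of countable cofinality; since each $\dot{T}_\alpha$ is a genuine generalized Aronszajn tree, $\P(\dot{T}_\alpha)$ is nontrivial, so this applies. The upper bound $\cov(\Me) \leq 2^{\aleph_0} \leq \kappa$ is automatic. For the lower bound, suppose $\{M_i : i < \lambda\}$ is a family of meager sets with $\lambda < \kappa$. Each $M_i$ is coded by a real $r_i$, and by the ccc together with the regularity of $\kappa$ every $r_i$ lies in some $V[G_{\beta_i}]$ with $\beta_i < \kappa$; hence $\beta := \sup_{i<\lambda} \beta_i < \kappa$ and all the $M_i$ are coded in $V[G_\beta]$. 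A Cohen real added at a limit stage of cofinality $\omega$ above $\beta$ avoids every meager set coded in $V[G_\beta]$, so $\bigcup_{i<\lambda} M_i \neq 2^\omega$. Thus $\cov(\Me) \geq \kappa$, and combined with the upper bound this also pins down $2^{\aleph_0} = \kappa$.

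The heart of the lemma is $\st = \kappa$. The inequality $\st \leq \cc = \kappa$ holds in $\ZFC$. For $\st \geq \kappa$ I must show every generalized Aronszajn tree $T$ of size $<\kappa$ in $V[G_\kappa]$ is special there. First I would run a reflection argument: coding $T$ as a set of ordinals below $\kappa$ of size $\lambda < \kappa$, each of the $\lambda$ many relevant membership facts has a nice name decided by countably many antichains, so the union of their (finite) supports has size $\leq \lambda$ and, as $\kappa$ is regular, is bounded below $\kappa$; hence $T \in V[G_\beta]$ for some $\beta < \kappa$. Because $V[G_\beta] \subseteq V[G_\kappa]$ and $T$ has no cofinal branch in $V[G_\kappa]$, it has none in $V[G_\beta]$, so $T$ is already a generalized Aronszajn tree in $V[G_\beta]$. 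The bookkeeping is then arranged so that some stage $\gamma$ with $\beta \leq \gamma < \kappa$ has $\dot{T}_\gamma$ evaluating to $T$; arranging this for all such trees simultaneously is exactly where $\kappa^{<\kappa} = \kappa$ is used (there are $\kappa$ many names to enumerate with cofinal repetition). Since $T$ still has no cofinal branch in $V[G_\gamma]$, the forcing $\Q_\gamma = \P(T)$ is ccc there and adds a specializing function, which persists to $V[G_\kappa]$ and witnesses that $T$ is special. Hence $V[G_\kappa]$ contains no non-special generalized Aronszajn tree of size $<\kappa$, i.e. $\st \geq \kappa$.

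The main obstacle is this last step: not the specialization itself, which is immediate from the Baumgartner--Malitz--Reinhardt theorem, but the combination of (i) the reflection that a small tree of the final model already lives, and is still generalized Aronszajn, at a bounded stage, and (ii) the design of the bookkeeping so that every such tree is actually caught by some $\dot{T}_\gamma$. Everything else reduces to routine ccc name-counting and the standard behavior of Cohen reals in finite support iterations.
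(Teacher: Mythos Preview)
Your proposal is correct and follows essentially the same approach as the paper, which treats the lemma as standard and merely sketches: cardinal arithmetic and $\cov(\Me)=\kappa$ via routine facts about finite support ccc iterations, and $\st=\kappa$ via the reflection that any tree of size ${<}\kappa$ appears at a bounded stage together with suitable bookkeeping. You simply supply the details the paper omits.
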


This lemma is essentially standard so we merely sketch the main points.

\begin{proof}
That $\cov(\Me)= 2^{\aleph_0} = \kappa=\kappa^{<\kappa}$ follows from well known facts concerning finite support iterations of ccc forcing notions, see e.g. \cite[p. 81]{BlassHB}. Moreover, since $\kappa$ is regular, every bounded subset appears at some initial stage and, in particular every generalized Aronszajn tree $T \in V[G_\kappa]$ of size ${<}\kappa$ is already in $V[G_\alpha]$ for some $\alpha < \kappa$ where $G_\alpha \subseteq \P_\alpha$ is $V$-generic. It follows that any such tree was specialized (assuming our book keeping device kept the books well enough) so $\st = \kappa$.
\end{proof}

We now turn to the meat of this section: showing that $\non(\Me)$ and several other cardinal characteristics are $\aleph_1$ in the special tree model. For the convenience of the reader we remind them what we aim to prove for Main Theorem \ref{mainthm1} in the case of $\kappa = \st = \cc$ regular.
\begin{theorem}
In the special tree model of length $\kappa$ we have $\st = 2^{\aleph_0} = \cov(\Me)=\kappa$ and $\non(\Me) = \mfa = \mathfrak{s} =\mathfrak{g}= \aleph_1$.
\end{theorem}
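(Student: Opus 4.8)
The plan is to establish the four upper bounds $\non(\Me) = \mfa = \mathfrak{s} = \mathfrak{g} = \aleph_1$ in the special tree model; the equalities $\st = \cc = \cov(\Me) = \kappa$ are already Lemma \ref{easy}. Since all four of the small cardinals are at least $\aleph_1$ in $\ZFC$, it suffices to produce, in $V[G_\kappa]$, a witnessing object of size $\aleph_1$ for each: a non-meager set of size $\aleph_1$, a MAD family of size $\aleph_1$, a splitting family of size $\aleph_1$, and a family of $\aleph_1$-many groupwise dense sets with empty intersection. The natural candidate in every case is a ground-model (or small-initial-segment) object, so the real work is a \emph{preservation} theorem: the standard strategy is to show that the finite support iteration $\P_\kappa$ of the specializing forcings $\P(\dot T_\alpha)$ preserves each of these objects, i.e.\ that the $\aleph_1$-sized witness from $V$ remains a witness in $V[G_\kappa]$. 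For $\non(\Me)=\aleph_1$ this means showing the iteration preserves non-meagerness of a fixed $V$-set of reals of size $\aleph_1$; for $\mfa,\mathfrak s,\mathfrak g$ it means preserving maximality/splitting/the empty-intersection property of the corresponding combinatorial family.

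The key leverage is Theorem \ref{sigmacentered}: every countable sequence of ordinals (in particular every new real) added by a single $\P(T)$ lies in a $\sigma$-centered subextension, via the forcing $\Q_{A_{\dot x}}$. This should reduce each preservation statement to the analogous statement for $\sigma$-centered forcing, which is classically well behaved. Concretely, I would first fix a sufficiently generic ground-model witness (e.g.\ build the MAD family, the splitting family, the groupwise-dense families, and the non-meager set by a careful construction under $\GCH$, possibly using a Ramsey ultrafilter or a tower to get the combinatorial families rigid enough to survive). The heart of the argument is a one-step lemma of the form: \emph{if $\P(T)$ adds a real $\dot x$ threatening to destroy the witness, then since $\dot x$ is added by the $\sigma$-centered $\Q_{A_{\dot x}}$, and $\sigma$-centered forcing cannot destroy the witness, we reach a contradiction.} For $\mathfrak s$ and $\mfa$ this uses that $\sigma$-centered forcing preserves appropriately chosen splitting/MAD families; for $\non(\Me)$ one uses that $\sigma$-centered forcing is $\sqsubseteq^{\rm Random}$-good-style tame and preserves non-meagerness of suitable sets. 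I would then lift the one-step lemma through the finite support iteration using the relevant preservation-of-properness-style iteration theorems (as in \cite[Chapter 6]{BarJu95}), noting that at limit stages of uncountable cofinality no new reals appear and at stages of countable cofinality a reflection/genericity argument handles the tail.

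The main obstacle I anticipate is that Theorem \ref{sigmacentered} only tells us that each \emph{individual} new real sits in a $\sigma$-centered subextension, but the subforcing $\Q_{A_{\dot x}}$ depends on the name $\dot x$ and varies from real to real; it is \emph{not} claimed that $\P(T)$ is itself $\sigma$-centered (indeed the Remark after the theorem stresses it need not be). So I cannot simply cite a blanket ``$\sigma$-centered forcing preserves X'' theorem for the whole iteration. The delicate point is therefore to formulate each combinatorial preservation property in a \emph{name-local} way—framing destruction of the witness as the existence of a single bad real, capturing that real inside its $\sigma$-centered $\Q_{A_{\dot x}}$, and deriving the contradiction there—and then to verify that this local argument is genuinely iterable, i.e.\ that it survives the finite support limit without the $\sigma$-centered subextensions having to cohere into one global $\sigma$-centered forcing. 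Getting the groupwise density number $\mathfrak g = \aleph_1$ is likely the trickiest of the four, since groupwise dense families interact with the interval structure of $\omega$ in a way that is more sensitive to the specific reals added than splitting or almost-disjointness; here I expect to need a precise understanding, from Section 3, of exactly which slaloms or interval partitions the specializing reals can and cannot produce.
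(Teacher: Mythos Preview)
Your preservation strategy has a genuine gap, most visibly for $\non(\Me)$. The reduction ``if a bad real $\dot x$ appears, it lives in a $\sigma$-centered subextension, and $\sigma$-centered forcing preserves the witness'' fails because $\sigma$-centered forcing does \emph{not} preserve non-meagerness of ground-model sets: Cohen forcing is $\sigma$-centered and makes the entire ground-model continuum meager. More structurally, $\P_\kappa$ is a finite support iteration of nontrivial ccc forcings, so Cohen reals are added at every limit stage of countable cofinality; hence $V\cap 2^\omega$ is meager in $V[G_\kappa]$ and no ground-model set of reals can witness $\non(\Me)=\aleph_1$. The same objection undermines the reductions for $\mathfrak s$ and $\mfa$: there are $\sigma$-centered forcings (e.g.\ Mathias--Prikry forcing relative to an ultrafilter) that add a pseudo-intersection of the ground-model reals, destroying any ground-model splitting family, so ``$\sigma$-centered'' is simply not a strong enough hypothesis to run the name-local preservation you sketch. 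Theorem \ref{sigmacentered} is used in the paper only to show $\P(T)$ is $\sqsubseteq^{\rm Random}$-good; it is not the engine behind the small-cardinal computations.

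The paper takes a different route. Rather than preserving a ground-model witness, it \emph{manufactures} the witnesses from the first iterand: the specializing function $g_{T_0}$ encodes, via parity on each infinite level of $T_0$, an $\omega_1$-sequence $\{c_\xi:\xi<\omega_1\}$ of mutually Cohen reals (Lemma \ref{cohenreals}). The key technical step is Lemma \ref{iteration}, an iteration lemma saying that for any $\P_\kappa$-name $\dot x$ for a real there is a countable ordinal $\gamma$ such that every finite piece of $\dot x$ can be decided by conditions whose first coordinate agrees with a given $q$ on all levels $\geq\gamma$ of $T_0$. This means that from the point of view of any single name $\dot x$, a tail of the $c_\xi$'s remain fully generic, and one can then argue exactly as in the Cohen model: the $c_\xi$'s form a non-meager set (Theorem \ref{nonmeagerset}), encode a tight MAD family and a splitting family (Theorem \ref{thmas}), and Blass's Turing-cofinality argument gives $\mathfrak g=\aleph_1$ (Theorem \ref{thmg}). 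The missing idea in your plan is precisely this ``decide $\dot x$ without touching a tail of levels of $T_0$'' lemma; once you have it, the witnesses are not preserved ground-model objects but rather the Cohen reals injected by the first specialization.
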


In light of Lemma \ref{easy}, in order to prove this theorem it remains to show that in the special tree model $\non(\Me) = \mfa = \mathfrak{s} =\mathfrak{g}= \aleph_1$. This follows collectively from Theorems \ref{nonmeagerset}, \ref{thmas} and \ref{thmg} below. To begin, we reformulate a consequence of the ccc that we have essentially seen for use later.

\begin{lemma}
Let $\dot{x}$ be a $\P(T_0)$ name for an $\omega$ sequence of ordinals and let $p \in \P(T_0)$ be a condition. There is a $\gamma < \omega_1$ so that for all $n < \omega$ there is an $r \leq p$ which decides $\dot{x} \hook n$ and $p \hook {\rm lev}_\beta(T_0) = r \hook {\rm lev}_\beta(T_0)$ for all $\beta \in [\gamma, \omega_1)$.
\label{stage0}
\end{lemma}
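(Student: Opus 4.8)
The plan is to exploit the two defining features of the forcing $\P(T_0)$: it is ccc (by the Baumgartner--Malitz--Reinhardt theorem) and each of its conditions is a \emph{finite} partial function on $T_0$. Together these confine all the information needed to decide $\dot{x}$ to countably many nodes, and hence---because $T_0$ has height $\omega_1$---to levels below some fixed countable ordinal. The ordinal $\gamma$ will simply be chosen above every level touched by any condition in the relevant antichains, so that both $p$ and the deciding conditions are empty above $\gamma$.

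Concretely, I would first, for each $n < \omega$, fix a maximal antichain $A_n$ below $p$ all of whose elements decide $\dot{x}\hook n$; such an antichain exists since the deciding conditions are dense below $p$, and by the countable chain condition each $A_n$ is countable. Next, set $A := \bigcup\{{\rm dom}(r) \mid r \in \bigcup_{n<\omega} A_n\}$. This is a countable union of finite sets, hence countable, so the set of levels $\{\beta : A \cap {\rm lev}_\beta(T_0) \neq \emptyset\}$ is a countable set of countable ordinals and is therefore bounded strictly below $\omega_1$. I would take $\gamma < \omega_1$ above this bound. Since every $r \in A_n$ extends $p$, we have ${\rm dom}(p) \subseteq {\rm dom}(r) \subseteq A$, so ${\rm dom}(p)$ too lies entirely below level $\gamma$, and in particular $p \hook {\rm lev}_\beta(T_0) = \emptyset$ for every $\beta \geq \gamma$.

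Finally, for each fixed $n$ I would pick any $r \in A_n$ (the antichain is nonempty, as otherwise no extension of $p$ would decide $\dot{x}\hook n$). Then $r \leq p$ and $r$ decides $\dot{x}\hook n$ by the choice of $A_n$, while ${\rm dom}(r) \subseteq A$ forces $r\hook {\rm lev}_\beta(T_0) = \emptyset = p\hook {\rm lev}_\beta(T_0)$ for every $\beta \in [\gamma,\omega_1)$, which is exactly the required agreement of $p$ and $r$ above $\gamma$.

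There is no deep obstacle here: the entire content is the observation that ccc together with finiteness of the conditions pins the deciding antichains inside a countable, hence height-bounded, subtree, with a single $\gamma$ working uniformly in $n$. The only point deserving care is precisely this uniformity---it is essential that the antichains $A_n$ are amalgamated into one set $A$ \emph{before} the height bound is extracted (or equivalently that one takes the supremum of the countably many separate bounds), since it is the countability of a countable union of countable antichains that keeps $\gamma$ below $\omega_1$.
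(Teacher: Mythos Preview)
Your proof is correct and follows essentially the same approach as the paper: use the ccc to get countable maximal antichains deciding the values of $\dot{x}$, observe that the union of the (finite) domains of their members is a countable subset of $T_0$ and hence bounded in level below some $\gamma<\omega_1$, and conclude that deciding conditions can be found without touching levels $\geq\gamma$. The only cosmetic difference is that the paper takes its antichains in all of $\P(T_0)$ (via the set $A_{\dot x}$ defined just before Theorem~\ref{sigmacentered}) rather than below $p$, so its $\gamma$ is independent of $p$ and the agreement $r\hook{\rm lev}_\beta(T_0)=p\hook{\rm lev}_\beta(T_0)$ need not be vacuous; your choice of antichains below $p$ makes both restrictions empty, which is of course sufficient.
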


%In the proof below, as well as the rest of the paper, for any forcing of the form $\P(T)$ and any $\P(T)$-name $\dot{x}$ for a countable set of ordinals we denote $\Q_{A_{\dot{x}}}$ by $\Q_{\dot{x}}$ where $A_{\dot{x}} = \bigcup_{n < \omega} A_n$ is as in the previous section. Of course, there is some ambiguity here since a priori different choices of $A_n$'s may give different $A_{\dot{x}}$'s but in practice this does not matter so we do not distinguish. 

\begin{proof}
Fix a $\P(T_0)$-name $\dot{x}$ as in the statement of the lemma. Since $A_{\dot{x}}$ is countable, there is a $\gamma$ greater than the supremum of the levels with nonempty intersection with $A_{\dot{x}}$. Any such $\gamma$ clearly suffices to witness the lemma.
\end{proof}

The above property is essentially preserved iteratively. In full generality this is as follows.

\begin{lemma}
For all $\alpha \leq \beta < \kappa$, all $p \in \P_\alpha$, all $\dot{q} \in \P(\dot{T}_\alpha)$ and every $\P_{\alpha, \beta}$-name for a countable set of ordinals $\dot{x}$ there is a $\gamma < \omega_1$ so that $p$ forces that for all $n < \omega$ there is an $r \in \P_{\alpha, \beta}$ with the following properties: 
\begin{enumerate}
\item
$r$ decides $\dot{x} \hook n$, 
\item
$r(\alpha + 1) \leq \dot{q}$ and 
\item
for all $\xi \in [\gamma, \omega_1)$ we have $r(\alpha + 1) \hook {\rm lev}_\xi(\dot{T}_\alpha) = \dot{q} \hook {\rm lev}_\xi(\dot{T}_\alpha)$.
\end{enumerate}
\label{iteration}
\end{lemma}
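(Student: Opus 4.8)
The plan is to prove this as the iterated analogue of Lemma \ref{stage0}, exploiting the fact that conditions (1)--(3) constrain only the single coordinate $r(\alpha+1)$ associated to $\P(\dot T_\alpha)$, leaving the remaining coordinates of $r$ completely unconstrained. Consequently I only need to control the ``footprint'' of the name $\dot x$ on the tree $\dot T_\alpha$, together with the heights of the finitely many nodes $\dot q$ can name; the later coordinates of the iteration are simply inherited from whatever condition happens to decide $\dot x \hook n$. I would work below $p$, passing to the extension $V[G_\alpha]$ by $\P_\alpha$ (with $p \in G_\alpha$) to verify the ``$p$ forces'' clause, while locating the ordinal $\gamma$ already in $V$.

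First I would locate $\gamma$. Since $\P_\beta$, and hence the quotient $\P_{\alpha,\beta}$, is ccc, for each $n < \omega$ fix a countable maximal antichain $A_n \subseteq \P_{\alpha,\beta}$ deciding $\dot x \hook n$, and let $D = \bigcup_{n} A_n$, still countable. For each $r' \in D$ the coordinate $r'(\alpha+1)$ is (forced to be) a finite partial specializing function on $\dot T_\alpha$; by ccc the levels of the nodes it names range over a countable, hence bounded, set of countable ordinals. Taking the supremum of all these levels over the countably many $r' \in D$, together with the (again ccc-bounded) levels of the nodes appearing in $\dot q$, yields a single $\gamma < \omega_1$ in $V$. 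This is the required ordinal: it lies strictly above every level of $\dot T_\alpha$ on which any condition of $D$, or $\dot q$, can place a node.

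With $\gamma$ fixed I would verify the conclusion in $V[G_\alpha]$. Write $q = \dot q^{G_\alpha} \in \P(\dot T_\alpha)$ and, for each $n$, consider the condition $(q, \mathbf 1) \in \P_{\alpha,\beta}$ that is $q$ on the $\P(\dot T_\alpha)$-coordinate and trivial elsewhere. By maximality of $A_n$ there is $r' \in A_n$ compatible with $(q, \mathbf 1)$; in particular $q$ and $r'(\alpha+1)$ are compatible in $\P(\dot T_\alpha)$, so their union is again a finite partial specializing function. Set $r(\alpha+1) = q \cup r'(\alpha+1)$ and let $r$ agree with $r'$ on all remaining coordinates. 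Then $r \leq r'$, so $r$ decides $\dot x \hook n$, giving (1); $r(\alpha+1) \supseteq q$ gives (2); and since $r' \in D$, every node of ${\rm dom}(r'(\alpha+1))$ — and hence every node of $r(\alpha+1)$ not already in $q$ — lies at a level below $\gamma$ by the choice of $\gamma$, so $r(\alpha+1) \hook {\rm lev}_\xi(\dot T_\alpha) = q \hook {\rm lev}_\xi(\dot T_\alpha)$ for every $\xi \geq \gamma$, giving (3).

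The only delicate points are bookkeeping rather than conceptual. The main one is establishing that $\gamma$ may be chosen in $V$: this requires the double use of the countable chain condition — once to make each antichain $A_n$ countable, and once more to bound, uniformly, the countably many countable ordinals that can occur as levels of the names for nodes appearing in the conditions of $D$ and in $\dot q$. The second point is the routine verification that $q \cup r'(\alpha+1)$ is a genuine condition of $\P(\dot T_\alpha)$ introducing no node at a level ${\geq}\gamma$, which is precisely what makes (2) and (3) hold simultaneously. I expect no genuine obstacle beyond this, the argument being a direct lift of the stage-$0$ case in Lemma \ref{stage0}; an induction on $\beta$ is also possible but unnecessary, since only the first coordinate of the quotient is ever touched.
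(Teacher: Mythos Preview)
Your argument is correct and takes a genuinely different---and shorter---route than the paper's. The paper proceeds by induction on $\beta$: at successor stages $\beta = \beta_0 + 1$ it passes to $V[G_{\beta_0}]$, codes the antichains deciding $\dot{x}$ in $\P(\dot T_{\beta_0})$ into a single countable sequence $\dot{z}$, and applies the inductive hypothesis to $\dot{z}$; limit stages of countable cofinality are handled by a similar coding along a cofinal $\omega$-sequence. You bypass the induction entirely by observing that conditions (1)--(3) constrain only the single coordinate $r(\alpha+1)$, so one may take maximal antichains in the full quotient $\P_{\alpha,\beta}$, bound the levels of their first coordinates by the ccc, and then build $r$ from any compatible $r'$ by strengthening only that coordinate. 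This is cleaner; the paper's induction is not wrong but does more bookkeeping than is strictly necessary.

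One point in your write-up deserves sharpening. You fix the antichains $A_n$ in $\P_{\alpha,\beta}$, which is a $V[G_\alpha]$-object; the supremum $\gamma'$ of the relevant levels is then computed in $V[G_\alpha]$ and depends on $G_\alpha$. To obtain $\gamma \in V$ as the lemma requires, you need one more application of the ccc (of $\P_\alpha$ this time) to bound the $\P_\alpha$-name $\dot\gamma'$ by a fixed countable ordinal. You allude to this in your ``double use of the ccc'' remark, but the two uses you list are both inside $V[G_\alpha]$; the passage from $\gamma'$ to $\gamma$ is really a third use and should be stated explicitly. Alternatively, one can work with maximal antichains in $\P_\beta$ chosen in $V$ from the outset, in which case each $r'(\alpha+1)$ is a genuine $\P_\alpha$-name and the bound on its levels is computed directly in $V$; predensity in the quotient then follows from the standard factoring $\P_\beta \cong \P_\alpha * \dot\P_{\alpha,\beta}$.
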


\begin{proof}
By induction on $\beta$. The case where $\alpha = \beta$ is essentially the same as Lemma \ref{stage0} so we assume $\alpha < \beta$. Fix $\dot{x}$ as in the hypothesis of the lemma. Let $p \in \P_\alpha$ and let $G_\alpha \subseteq \P_\alpha$ be $V$-generic with $p \in G_\alpha$. Work in $V[G_\alpha]$ and let $T_\alpha = \dot{T}_\alpha^{G_\alpha}$ and $q = \dot{q}^{G_\alpha}$. 

\noindent \underline{Case 1: $\beta = \beta_0 + 1$}. By induction, for every $\P_{\alpha, \beta_0}$-name $\dot{y}$ for a countable set of ordinals there is a $\gamma < \omega_1$ so that for every $n < \omega$ there is an $r \in \P_{\alpha, \beta_0}$ so that $r$ decides $\dot{y} \hook n$, $r(\alpha + 1) \leq q$ and for all $\xi \in [\gamma, \omega_1)$ we have $r (\alpha + 1) \hook {\rm lev}_\xi(T_\alpha) = q \hook {\rm lev}_\xi(T_\alpha)$. 

Consider $q$ as a $\P_{\alpha, \beta_0}$ condition (with support $\{\alpha + 1\}$) and temporarily work in $V[G_{\beta_0}]$ where $G_{\beta_0} = G_\alpha * H$ with $q \in H \subseteq \P_{\alpha, \beta_0}$-generic over $V[G_\alpha]$. Since $\P_\beta = \P_{\beta_0} * \P(\dot{T}_{\beta_0})$, in $V[G_{\beta_0}]$ there is a $\P(\dot{T}_{\beta_0}^{G_{\beta_0}})$-name $\dot{x}'$ forced to be equal to $\dot{x}$. For each $n < \omega$ let $B_n$ be a maximal antichain in $\P( \dot{T}^{G_{\beta_0}}_{\beta_0})$ deciding $\dot{x}'(\check{n})$ and let $B_{\dot{x}'} \subseteq \dot{T}^{G_{\beta_0}}_{\beta_0}$ be the downward closure of the union of the domains of $\bigcup_{n < \omega} B_n$. Note that $B_{\dot{x}'}$ is a countable set of ordinals and each $B_n$ is a countable set of ordinals. Finally let $z$ be a countable sequence of ordinals coding $B_{\dot{x}'}$, each $B_n$ as well as how each $r \in B_n$ decides $\dot{x}'(\check{n})$. Finally, back in $V[G_\alpha]$ let $\dot{z}$ be a $\P_{\alpha, \beta_0}$-name for $z$.

Applying our inductive hypothesis to $\dot{z}$, we can find a $\gamma < \omega_1$ so that for every $n < \omega$ there is an $r \in \P_{\alpha, \beta_0}$ so that $r$ decides $\dot{z} \hook n$, $r(\alpha + 1) \leq q$ and for all $\xi \in [\gamma, \omega_1)$ we have $r (\alpha + 1) \hook {\rm lev}_\xi(T_\alpha) = q \hook {\rm lev}_\xi(T_\alpha)$. But now, given any $n < \omega$ we can decide $\dot{x} \hook n$ simply by first deciding enough of $\dot{z}$ to find compatible conditions in $\P(\dot{T}_{\beta_0})$ in $\{B_i \; | \; i < n\}$ and then finding a strengthening of such conditions with domain included in $B$. In particular for any $n < \omega$ if $r \in \P_{\alpha, \beta_0}$ decides $\dot{z} \hook m$ for some sufficiently large $m < \omega$ then we have enough information to find an $r' \in \P_{\alpha, \beta}$ so that $r' \hook \beta_0 = r$ and $r'$ decides $\dot{x} \hook \check{n}$. Consequently this $\gamma$ is as needed for $\beta$ as well.

\noindent \underline{Case 2: $\beta$ is a limit ordinal}. Since countable sets of ordinals are only added at stages of countable cofinality we may assume that ${\rm cf}(\beta) = \omega$. Let $\{\beta_n\}_{n< \omega}$ be a strictly increasing sequence of ordinals with supremum $\beta$ and let $\beta_0 = \alpha$. By induction, for every $n, k <\omega$ and every $\P_{\alpha, \beta_n}$-name $\dot{y_n}$ for a countable set of ordinals there is a $\gamma_n < \omega_1$ and there is an $r \in \P_{\alpha, \beta_n}$ so that $r$ decides $\dot{y}_n \hook k$, $r( \alpha + 1 )\leq q$ and for all $\xi \in [\gamma_n, \omega_1)$ we have $r (\alpha + 1) \hook {\rm lev}_\xi(T_\alpha) = q \hook {\rm lev}_\xi(T_\alpha)$. Note that the above works uniformly for any $\gamma$ above $\gamma_n$ for all $n < \omega$ simultaneously. 

For each $n < \omega$ let $A_n \subseteq \P_{\alpha, \beta}$ be a maximal antichain deciding $\dot{x} \hook n$. Let $A_{n, k} = \{r \hook \beta_k\; | \; r \in A_n\}$. Note that by finite support for each $n < \omega$ we have $A_n = \bigcup_{k < \omega} A_{n, k}$. For each $n < \omega$ let $\dot{z}_n$ be the $\P_{\alpha, \beta_n}$-name for the countable set of ordinals coding $\bigcup_{l < \omega} \bigcup_{j < n +1} A_{l, j}$ alongside the relevant countable subtrees and the decisions made by the elements of the maximal antichain. Our inductive hypothesis gives us countable ordinals $\{\gamma_n\}_{n < \omega}$ so that for all $n, k < \omega$ we can find an $r \in \P_{\alpha, \beta_n}$ so that $r$ decides $\dot{z}_n \hook k$, $r( \alpha + 1 )\leq q$ and for all $\xi \in [\gamma_n, \omega_1)$ we have $r (\alpha + 1) \hook {\rm lev}_\xi(T_\alpha) = q \hook {\rm lev}_\xi(T_\alpha)$.

Fix an ordinal $\gamma > {\rm sup}_{n < \omega} \, \gamma_n$ and note, as described above $\gamma$ works concurrently for all $\dot{z}_n$. It follows though that for this $\gamma$ for any $k < \omega$ we can always find an $r$ so that $r$ decides $\dot{z} \hook k$, $r( \alpha + 1 )\leq q$ and for all $\xi \in [\gamma, \omega_1)$ we have $r (\alpha + 1) \hook {\rm lev}_\xi(T_\alpha) = q \hook {\rm lev}_\xi(T_\alpha)$, which completes the proof.
\end{proof}

The main theorem is a corollary of this lemma and the following one.

\begin{lemma}
Let $T$ be an Aronszajn tree, $\{\alpha_\xi \; | \; \xi < \omega_1\}$ enumerate (in order) the infinite levels of $T$ (if all but countably many levels are finite then of course $T$ has an uncountable branch), and let $A_\xi \subseteq {\rm lev}_{\alpha_\xi}(T)$ be countably infinite, say $A_\xi = \{t^\xi_n \; | \;n < \omega\}$. If $g_T$ is $\P(T)$-generic then the function $c_\xi \in 2^\omega$ defined by $n \mapsto g_T(t^\xi_n) \, {\rm mod} \, 2$ is Cohen generic over $V$.
\label{cohenreals}
\end{lemma}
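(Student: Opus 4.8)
The plan is to realize $c_\xi$ as the generic real arising from a projection of $\P(T)$ onto Cohen forcing. I would take Cohen forcing in the form $\mathbb{C}=\mathrm{Fn}(\omega,2)$ of finite partial functions $\omega\to 2$ ordered by reverse inclusion, whose generic object $\bigcup G_{\mathbb{C}}$ is an element of $2^\omega$; recall that the generic reals for $\mathbb{C}$ are exactly the Cohen reals. Since the $t^\xi_n$ are distinct nodes, the assignment
$$\rho(p)=\{(n,\,p(t^\xi_n)\bmod 2)\;:\;t^\xi_n\in\mathrm{dom}(p)\}$$
is a well-defined finite partial function $\omega\to 2$, so $\rho\colon\P(T)\to\mathbb{C}$ is a map with $\rho(\emptyset)=\emptyset$. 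Because $q\leq p$ means $q\supseteq p$, we plainly get $\rho(q)\supseteq\rho(p)$, so $\rho$ is order preserving, and one checks directly that $\bigcup_{p\in G_T}\rho(p)=c_\xi$.

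The main step I would establish is the projection property: for every $p\in\P(T)$ and every $s\in\mathbb{C}$ with $s\supseteq\rho(p)$ there is $q\leq p$ with $\rho(q)\supseteq s$. Here is where, and the only place where, the hypotheses enter: the nodes $t^\xi_n$ all lie on the single level $\mathrm{lev}_{\alpha_\xi}(T)$, hence are pairwise incomparable in $T$, so the requirement that a condition be injective on chains imposes no constraint at all among the values assigned to distinct $t^\xi_n$'s. Concretely, given $p$ and $s\supseteq\rho(p)$, set $E=\{n\in\mathrm{dom}(s):t^\xi_n\notin\mathrm{dom}(p)\}$. For each $n\in E$ the set $F_n=\{p(t):t\in\mathrm{dom}(p),\ t\sqsubseteq_T t^\xi_n\ \text{or}\ t^\xi_n\sqsubseteq_T t\}$ is finite, so, as there are infinitely many naturals of each parity, I may choose $v_n$ with $v_n\equiv s(n)\pmod 2$ and $v_n\notin F_n$. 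Put $q=p\cup\{(t^\xi_n,v_n):n\in E\}$. Any chain in $\mathrm{dom}(q)$ meets $\{t^\xi_n:n\in E\}$ in at most one point by incomparability, and $v_n$ was chosen to differ from the $p$-values of all comparable nodes, so $q$ is injective on chains, i.e.\ $q\in\P(T)$; clearly $q\leq p$, and $\rho(q)\supseteq s$, since the coordinates lying already in $\mathrm{dom}(\rho(p))$ agree with $s$ as $s\supseteq\rho(p)$.

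Granting the projection property, the standard fact that a projection carries a $V$-generic filter to a $V$-generic filter shows that $H=\{s\in\mathbb{C}:\exists p\in G_T\ \rho(p)\supseteq s\}$ is $\mathbb{C}$-generic over $V$, and $\bigcup H=c_\xi$; hence $c_\xi$ is Cohen over $V$. Totality of $c_\xi$, i.e.\ $c_\xi\in 2^\omega$, also follows from genericity, since $\{p:t^\xi_n\in\mathrm{dom}(p)\}$ is dense for each $n$ by the same value-choosing argument. I do not anticipate a serious obstacle: the entire content is the decoupling of the parities furnished by the pairwise incomparability of nodes on a common level, together with the harmless observation that only finitely many values are ever excluded by previously assigned comparable nodes, leaving a natural of each parity always available.
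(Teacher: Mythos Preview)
Your proof is correct and follows essentially the same approach as the paper's own argument: both establish that one can extend any $p\in\P(T)$ so that the induced partial Cohen condition extends any prescribed $s\supseteq\rho(p)$, exploiting that the $t^\xi_n$ lie on a single level (hence are pairwise incomparable) and that only finitely many values are ruled out by comparable nodes already in $\mathrm{dom}(p)$. You package this as a projection $\rho:\P(T)\to\mathrm{Fn}(\omega,2)$, while the paper phrases it as a direct density argument against dense open $D\subseteq 2^{<\omega}$, but the content is identical.

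If anything, your write-up is slightly more careful: you correctly exclude values coming from nodes of $\mathrm{dom}(p)$ lying \emph{above} $t^\xi_n$ as well as below (your set $F_n$), whereas the paper's sentence mentions only predecessors; and your choice of $\mathrm{Fn}(\omega,2)$ avoids the minor informality in the paper where $c^p_\xi$ need not have domain an initial segment of $\omega$ yet is compared to elements of $2^{<\omega}$.
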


Slightly more succinctly, and less formally, this lemma states that the parity of the restriction of $g_T$ to any countably infinite subset of $T$ is a Cohen real.

\begin{proof}
Let $p \in \P(T)$ be a condition, $\xi < \omega_1$ and let $c^p_\xi$ be the finite function determined by $n \mapsto p(t^\xi_n) \, {\rm mod} \, 2$. Let $D \subseteq 2^{<\omega}$ be dense open and let $c' \supseteq c^p_\xi$ be an extension into this set. For every $k \in {\rm dom}(c') \setminus {\rm dom}(c^p_\xi)$ there are only finitely many $s \leq t_k^\xi$ in the domain of $p$ hence we can extend $p$ to a condition with $t_k^\xi$ in the domain and map it to either an even or odd number as we choose. But then we can extend $p$ to a $q$ so that $c_\xi^q = c'$ as needed.
\end{proof}

Note the same proof essentially shows that the $c_\xi$'s are mutually generic though we will not need this.

\begin{theorem}
Let $\{\alpha_\xi \; | \; \xi < \omega_1\}$ be as above for $T = T_0$. Let $G_\kappa \subseteq \P_\kappa$ be generic over $V$. Borrowing the notation from Lemma \ref{cohenreals}, in $V[G_\kappa]$ the set $\{c_\xi \; | \; \xi < \omega_1\}$ forms a non meager set and hence $\non(\Me) = \aleph_1$ in $V[G_\kappa]$.
\label{nonmeagerset}
\end{theorem}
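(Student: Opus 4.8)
I need to show that in the special tree model $V[G_\kappa]$, the set $\{c_\xi \mid \xi < \omega_1\}$ is non-meager, where each $c_\xi \in 2^\omega$ is the parity-of-generic-specializing-values real from Lemma \ref{cohenreals}. Since this set has size $\aleph_1$, establishing non-meagerness immediately gives $\non(\Me) \leq \aleph_1$, hence $=\aleph_1$.

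Let me think about what tools I have and what the obstacle is.

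**The standard characterization of non-meager.** A set $X \subseteq 2^\omega$ is non-meager iff it cannot be covered by a meager $F_\sigma$ set, equivalently iff for every meager set there is a point of $X$ outside it. The cleanest combinatorial handle: a set $X$ is non-meager iff for every sequence of "intervals and finite patterns" — more precisely, by the Bartoszyński-type characterization, $X$ is non-meager iff for every partition of $\omega$ into consecutive finite intervals $\langle I_k \rangle$ and every choice of $\sigma_k \in 2^{I_k}$, there is some $x \in X$ with $x \restriction I_k = \sigma_k$ for infinitely many $k$. A meager set, dually, is contained in the set of reals that match a fixed pattern on only finitely many blocks. So to prove non-meagerness I must show: given any such block partition $\langle I_k\rangle$ and pattern $\langle \sigma_k\rangle$ coded in $V[G_\kappa]$, some $c_\xi$ matches $\sigma_k$ on infinitely many blocks $I_k$.

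**The plan.** The point is that each $c_\xi$ is a Cohen real (Lemma \ref{cohenreals}), but Cohenness over $V$ is not enough — the meager set to be avoided lives in $V[G_\kappa]$, so I need genericity over a model containing the meager set's code. The first move is to reduce to a bounded-support statement. A block partition $\langle I_k\rangle$ and pattern $\langle\sigma_k\rangle$ is a countable sequence of ordinals, so it is a $\P_\kappa$-name $\dot{x}$; by the finite support and ccc (and since countable sequences appear at stages of countable cofinality), $\dot x$ is added by some initial segment $\P_\beta$ with $\beta < \kappa$. So it suffices to work over $V[G_\beta]$ and show some $c_\xi$ meets the pattern infinitely often, where now the partition and pattern are fixed in $V[G_\beta]$. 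The real $c_\xi$ is built from $g_{T_0}$, added at stage $0$, so I need to argue that the generic object at stage $0$ remains suitably Cohen-generic over $V[G_\beta]$, i.e. that the later forcing $\P_{0,\beta}$ does not destroy the Cohenness of the $c_\xi$ with respect to later-appearing meager sets. This is exactly where Lemma \ref{iteration} enters: it is a preservation-of-genericity / properness-style lemma controlling how later conditions interact with the tree levels.

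**The core argument and the obstacle.** The heart is a density/genericity argument. Fix a condition $p \in \P_\beta$ and a $\P_\beta$-name $\dot{x}$ for a block partition and pattern. I want to show $p$ forces that for some $\xi$, and infinitely many $k$, $c_\xi \restriction I_k = \sigma_k$. Using Lemma \ref{iteration} applied at stage $\alpha = 0$, given any target finite stretch of $c_\xi$ I can find an extension $r$ that decides the relevant initial segment of $\dot x$ (so I learn $I_k$ and $\sigma_k$ for the blocks in question) while agreeing with the stage-$0$ coordinate $\dot q$ on all levels above some fixed countable $\gamma$. The crucial freedom is that on a fixed level $\alpha_\xi \geq \gamma$, the stem of the stage-$0$ coordinate is not yet pinned down on the infinitely many nodes $t^\xi_n$ below the relevant future levels — so, exactly as in the proof of Lemma \ref{cohenreals}, I can extend the stage-$0$ condition to force $c_\xi$ to match $\sigma_k$ on the block $I_k$, for whichever block the decided pattern reveals. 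Iterating this across infinitely many blocks by a genericity argument produces the required infinitely-often matching. The \textbf{main obstacle} is precisely the bookkeeping in this last step: Lemma \ref{iteration} gives, for each fixed $n$, an extension fixing high levels and deciding $\dot x \restriction n$, but I must interleave these decisions with my freedom to steer the parities $c_\xi$, ensuring the level $\alpha_\xi$ I use lies above the $\gamma$ produced by the lemma \emph{and} that the nodes $t^\xi_n$ I adjust are not already determined by the decided part of $\dot x$ or by $\dot q$ below $\gamma$. The stage-$0$ tree being Aronszajn (countable levels) is what guarantees $A_{\dot x} \cap {\rm lev}_{\alpha_\xi}$ is finite, leaving infinitely many free nodes $t^\xi_n$ on which to encode any finite pattern — this is the feature that makes the steering possible and is the reason the argument is run at stage $0$ with $T_0$ genuinely Aronszajn rather than merely generalized Aronszajn.
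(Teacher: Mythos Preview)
Your approach is essentially the same as the paper's: use the Bartoszy\'nski characterization of meager sets (a set $A$ is meager iff for some increasing $f$ and some $x_f \in 2^\omega$, every $y \in A$ eventually never matches $x_f$ on the intervals $[f(l),f(l+1))$), apply Lemma~\ref{iteration} at $\alpha=0$ to produce the threshold $\gamma$, and then for $\xi \geq \gamma$ steer $c_\xi$ to match the pattern on a late block. Your reduction to a bounded stage $\beta < \kappa$ is unnecessary---the paper works directly with the $\P_\kappa$-name and a condition $p \in \P_\kappa$---but harmless.

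However, your final paragraph misidentifies the source of the freedom, and this matters. You write that $T_0$ being ``genuinely Aronszajn (countable levels)'' is what guarantees $A_{\dot x} \cap {\rm lev}_{\alpha_\xi}$ is finite and hence leaves infinitely many free nodes. This is wrong on two counts. First, in the paper's setup $T_0$ is a \emph{generalized} Aronszajn tree, with no assumption whatsoever on the size of the levels; your claim that the argument ``is run at stage $0$ with $T_0$ genuinely Aronszajn rather than merely generalized Aronszajn'' is simply false. Second, the freedom does not come from any smallness of $A_{\dot x} \cap {\rm lev}_{\alpha_\xi}$. The correct reason is precisely the conclusion of Lemma~\ref{iteration}: for $\xi \geq \gamma$, the extension $r \leq p$ deciding $\dot f \hook k{+}2$ and $\dot x_f \hook \dot f(k{+}2)$ satisfies $r(1) \hook {\rm Lev}_\xi(T_0) = p(1) \hook {\rm Lev}_\xi(T_0)$. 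Since $p(1)$ is a single fixed finite condition, only finitely many of the nodes $t^\xi_n$ lie in ${\rm dom}(r(1))$, and the set of indices $n$ so arising is bounded independently of $k$ and of the size of the level. One then chooses $k$ larger than this bound (and larger than any given $l$), so that the block $[\dot f(k), \dot f(k+1))$ consists entirely of indices where $c_\xi^{r(1)}$ is undefined, and exactly as in Lemma~\ref{cohenreals} one extends $r(1)$ on level $\xi$ to force $c_\xi$ to match $\dot x_f$ there. No countable-level hypothesis is used or available.
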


To ease the notation, below we assume $\xi = \alpha_\xi$ for all $\xi < \omega_1$. This causes no loss of generality and prevents having to add subscripts unnecessarily. Also, given a condition $s \in \P(T_0)$ and a countable ordinal $\xi < \omega_1$ let $c_\xi^s$ be the corresponding Cohen condition determined by $s$ for $c_\xi$ as used in the proof of Lemma \ref{cohenreals}.

\begin{proof}
We need the following fact.

\begin{fact}[Theorem 2.2.4 of \cite{BarJu95}]
If $A \subseteq 2^\omega$ then $A$ is meager if and only if there is a strictly increasing $f \in \baire$ and an $x_f \in \cantor$ so that $$A \subseteq \{y \in 2^\omega \; | \; \exists j \in \omega \, \forall l > j \, y\hook[f(l), f(l+1)) \neq x_f\hook [f(l), f(l+1))\}$$
\label{meagerfact}
\end{fact}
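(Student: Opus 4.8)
The plan is to prove both implications of this standard characterization. Throughout I write $I_l = [f(l), f(l+1))$ for the $l$-th block determined by $f$ and, given $f$ and $x_f$, I abbreviate the right-hand side by $M_{f,x_f}$, so that $y \in M_{f,x_f}$ holds exactly when $y \hook I_l \neq x_f \hook I_l$ for all but finitely many $l$.

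For the easy ("if") direction I would first show that $M_{f,x_f}$ is itself meager, which suffices since $A \subseteq M_{f,x_f}$ and $\Me$ is closed downward. Writing $M_{f,x_f} = \bigcup_{j \in \omega} F_j$ where $F_j = \{y \; | \; \forall l > j \; y \hook I_l \neq x_f \hook I_l\}$, each $F_j$ is closed, being an intersection of clopen conditions, and is nowhere dense: given any $\sigma \in 2^{<\omega}$ one extends $\sigma$ to a word $\tau$ long enough to contain some block $I_l$ with $l > j$ on which $\tau$ agrees with $x_f$, and then every $y \supseteq \tau$ agrees with $x_f$ on $I_l$, so $[\tau] \cap F_j = \emptyset$. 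Hence $M_{f,x_f}$ is a countable union of nowhere dense sets, and both it and $A$ are meager.

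For the substantive ("only if") direction, suppose $A$ is meager and fix an increasing chain $N_0 \subseteq N_1 \subseteq \cdots$ of closed nowhere dense sets with $A \subseteq \bigcup_n N_n$. I would build $f$ and $x_f$ recursively by blocks, ensuring at stage $l$ that the single block value $x_f \hook I_l$ forces every $y$ agreeing with it on $I_l$ out of $N_l$, regardless of what precedes. Concretely, set $f(0) = 0$; given $f(l)$, enumerate the finitely many $\sigma \in 2^{f(l)}$ and process them one at a time, at each step extending the current finite approximation to the block so that appending it to $\sigma$ yields a condition disjoint from $N_l$. This is possible because $N_l$ is nowhere dense, and it is preserved under all subsequent extensions because $N_l$ is closed (passing to a longer word only shrinks the associated clopen set). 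After all $\sigma$ have been treated the block is determined; I let $f(l+1)$ be its right endpoint and let $x_f \hook I_l$ record the chosen values. The resulting escape property is that whenever $y \hook I_l = x_f \hook I_l$, taking $\sigma = y \hook f(l)$ we get $y \in [\sigma \mathbin{^\frown} x_f \hook I_l]$ and hence $y \notin N_l$.

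Finally I would verify $A \subseteq M_{f,x_f}$ by applying this escape property contrapositively. If $y \in A$ then $y \in N_n$ for some $n$, hence $y \in N_l$ for every $l \geq n$ by monotonicity of the chain; the escape property at each stage $l \geq n$ then forces $y \hook I_l \neq x_f \hook I_l$, which is precisely membership in $M_{f,x_f}$ with witness $j = n$. I expect the block diagonalization in the recursive step to be the only delicate point: one must produce a \emph{single} finite word $x_f \hook I_l$ that simultaneously defeats all $2^{f(l)}$ possible predecessors $\sigma$, and it is exactly the combination of sequential extension (using nowhere density of $N_l$) with the closedness of $N_l$ (which makes "disjoint from $N_l$" persist under further extension) that secures this.
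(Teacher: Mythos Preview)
Your argument is correct and is essentially the standard proof of this characterization (as in Bartoszy\'nski--Judah, Theorem~2.2.4). Note that the paper does not supply its own proof of this statement: it is quoted as a \emph{Fact} from \cite{BarJu95} and used as a black box inside the proof of Theorem~\ref{nonmeagerset}, so there is nothing to compare your approach against beyond the textbook source. One minor remark: in the recursive step, the reason the disjointness from $N_l$ achieved for earlier $\sigma$'s persists under further extensions of the block is simply that extending the block shrinks the associated basic clopen set; closedness of $N_l$ is not actually needed for that preservation (it is only used, if at all, to identify $N_l$ with its closure when invoking nowhere density).
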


Thus we need to show that there is no $f$ and $x_f$ as described above for $A = C:=\{c_\xi\; | \; \xi < \omega_1\}$. Let $\dot{x}_f, \dot{f}$ be names for, respectively, an element of Cantor space and a strictly increasing function from $\omega$ to $\omega$. Fix $p \in \P_\kappa$. By Lemma \ref{iteration}, there is a $\gamma < \omega_1$ so that for all $\xi \in [\gamma, \omega_1)$ and all $k < \omega$ we can find an $r \leq p$ with $r$ deciding $\dot{f} \hook k+2$ and $\dot{x}_f \hook f(k+2)$ and $r(1) \hook {\rm Lev}_\xi(T_0) = p(1) \hook {\rm Lev}_\xi(T_0)$. Now for every $l < \omega$ let $k > {\rm max} \{l, {\rm sup} \{n < \omega\; | \exists \xi < \omega_1 \,  t^\xi_n \in {\rm dom}(p(1))\}\}$. Note that such a $k \in \omega$ exists since ${\rm dom}(p(1))$ is finite\footnote{Note that $p(1) \in \P(T_0)$ (as opposed to $p(0)$) since $\P_0$ is defined to be the trivial forcing and $\P_1$ is $\P(T_0)$.}. Now find an $r_k$ as described above deciding $\dot{f} \hook k+2$ and $\dot{x}_f \hook \dot{f}(k+2)$. Note that for all $\xi \in [\gamma, \omega_1)$ we have ${\rm dom}(c_\xi^{r(1)}) \subseteq k$ since $r(1) \hook {\rm Lev}_\xi(T_0) = p(1) \hook {\rm Lev}_{\xi}(T_0)$ and by assumption the latter is contained in $k$. In particular ${\rm dom}(c_\xi^{r(1)})$ does not contain anything in the interval $[\dot{f}(k), \dot{f}(k+1))$. Therefore, we can extend $r(1) \hook {\rm Lev}_\xi(T_0)$, in the same way described in Lemma \ref{cohenreals} so that the part of $c_\xi$ decided agrees with $\dot{x}_f$ on this interval. This means in particular that for every $l < \omega$ it is dense to force that there is a $k > l$ so that $c_\xi\hook [\dot{f}(k), \dot{f}(k+1)) = \dot{x}_f \hook [\dot{f}(k), \dot{f}(k+1))$ for a tail of $\xi$ and therefore no meager set of the form described in Fact \ref{meagerfact} can capture all the Cohen reals in $V[G_\kappa]$ which implies that the set is non meager as needed.
\end{proof}

\begin{remark}
A well known result of Miller \cite[Theorem 2.4.7]{BarJu95} states that $\non(\Me)$ is the least size of a set $A \subseteq \baire$ so that no $g \in \baire$ is eventually different from every $f \in A$. A simple tweaking of the argument above shows that the Cohen generics in Baire space coded by the $c_\xi$'s form such a family and hence give an alternative proof of Theorem \ref{nonmeagerset}.
\end{remark}

A very similar proof shows that $\mfa = \mfs =  \aleph_1$ in $V[G_\kappa]$.

\begin{theorem}
\begin{enumerate}
\item
There is a tight MAD family of size $\aleph_1$ in $V[G_\kappa]$ and, in particular $\mfa=\aleph_1$ in the special tree model. 

\item
$\mfs = \aleph_1$ in the special tree model.

\end{enumerate}
\label{thmas}
\end{theorem}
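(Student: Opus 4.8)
The key insight available to us is that the generic specializing function, restricted in parity to a countably infinite set of nodes on a single level, yields a Cohen real (Lemma \ref{cohenreals}), and that by Lemma \ref{iteration} we can control how conditions decide names while leaving the high levels of $T_0$ untouched. My plan is to reuse exactly this machinery to build witnessing families of size $\aleph_1$ for both $\mfa$ and $\mfs$. Concretely, I expect the Cohen reals $c_\xi$ (or simply the underlying infinite subsets of $\omega$ they naturally code) already to form the desired families, with the genericity over the tail of the tree guaranteeing that no single real from $V[G_\kappa]$ can witness maximality failing (for $\mfs$) or the family being non-maximal (for $\mfa$).

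\emph{The splitting number.} For $\mfs = \aleph_1$ I would show directly that the family $\mathcal{S} = \{ S_\xi \mid \xi < \omega_1\}$ is splitting, where $S_\xi = \{n < \omega \mid g_{T_0}(t^\xi_n) \text{ is even}\}$ is the set coded by the Cohen real $c_\xi$. Given any name $\dot{Y}$ for an element of $[\omega]^\omega$ and any condition $p \in \P_\kappa$, I apply Lemma \ref{iteration} to obtain a $\gamma < \omega_1$ below which we need not worry; then for any $\xi \in [\gamma, \omega_1)$ the condition $p$ fixes only finitely many nodes $t^\xi_n$, so we are free to extend $p$ on ${\rm Lev}_\xi(T_0)$, mapping freshly-added nodes $t^\xi_n$ to even or odd values at will. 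This lets us force infinitely many $n \in \dot{Y}$ into $S_\xi$ and infinitely many into its complement, density of which shows $S_\xi$ splits $Y$ in the extension. The argument is structurally identical to the non-meager proof: the only real content is that the decision of $\dot{Y}$ can always be taken while leaving the high levels of $T_0$ alone, which is precisely what Lemma \ref{iteration} delivers.

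\emph{The almost-disjointness number.} For $\mfa = \aleph_1$, the plan is first to construct in the ground model $V$ (where $\CH$ holds) a MAD family $\{B_\alpha \mid \alpha < \omega_1\}$ that is \emph{tight} (i.e. witnesses $\mfa = \mathfrak{a}_{\rm T}$; a family such that for any countably many sets $Y_n$ each having infinite intersection with infinitely many $B_\alpha$, there is a single $B_\alpha$ with infinite intersection with each $Y_n$ on which we can diagonalize — the precise tightness property is what the theorem statement flags). The substantive work is to show this family remains MAD in $V[G_\kappa]$: given a name $\dot{Y}$ for a purported infinite set almost disjoint from every $B_\alpha$, I use Lemma \ref{iteration} together with the $\sigma$-centered/Cohen-subextension analysis of Section 3 to reflect $\dot{Y}$ into an intermediate model of size $\aleph_1$, where tightness of the ground-model family forces a contradiction. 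The main obstacle here, and the reason this case is genuinely harder than $\mfs$, is that MADness is not preserved by arbitrary ccc forcing, so tightness (rather than mere maximality) of the ground-model family is essential; getting the reflection of $\dot{Y}$ to land in a model where tightness still applies is the crux, and I would lean on Theorem \ref{sigmacentered} (every countable sequence of ordinals added by each iterand lies in a $\sigma$-centered, indeed here Cohen, subextension) to carry out this reflection cleanly through the finite-support iteration.
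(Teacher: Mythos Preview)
Your treatment of $\mfs$ is correct and matches the paper's: take $S_\xi=\{n:c_\xi(n)=1\}$ and use Lemma \ref{iteration} exactly as in Theorem \ref{nonmeagerset} to show every new infinite set is split by a tail of the $S_\xi$'s.

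Your plan for $\mfa$, however, diverges from the paper and has a genuine gap. The paper does \emph{not} preserve a ground-model family; it builds the MAD family inside the extension from the Cohen reals $c_\xi$ themselves, via the classical Cohen-model recipe (recode each $c_\xi$ as a Cohen generic $d_\xi\in\baire$, start from a ground-model partition $\{A_n:n<\omega\}$, and for $\xi\geq\omega$ let $d_\xi$ diagonalize $A_\xi$ against $\{A_\zeta:\zeta<\xi\}$). Tightness in $V[G_\kappa]$ is then verified directly by the same Lemma \ref{iteration} argument: given names $\{\dot{D}_n\}$ and a condition $p$, decide finite pieces of the $\dot{D}_n$ while leaving a tail of levels of $T_0$ free, then steer some $A_\xi$ with $\xi$ large to meet each $\dot{D}_n$ infinitely often. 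This is literally the $\non(\Me)$ proof run again with a different target.

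Your approach asks instead for a preservation theorem: that a tight MAD family from $V$ stays MAD (indeed tight, since the statement claims tightness in $V[G_\kappa]$) through $\P_\kappa$. The appeal to Theorem \ref{sigmacentered} does not deliver this. Tightness is equivalent to Cohen-indestructibility, but you only know each new real sits in a $\sigma$-centered subextension, and $\sigma$-centered-indestructibility of tight families is not something you have established (nor is it obviously true). The phrase ``reflect $\dot{Y}$ into an intermediate model of size $\aleph_1$'' is too vague to fill this hole: even granting such a reflection, you still need to know why the family is MAD in that intermediate model, which is precisely the preservation fact in question. (A small side issue: your parenthetical gloss on tightness asks for a single $B_\alpha$ meeting all the $Y_n$; the actual definition asks only for some $B\in\mathcal I(\mathcal A)$.) The paper's route avoids the preservation problem entirely by constructing the witness from the generic and arguing density.
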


Recall here that an infinite MAD family $\mathcal A$ is {\em tight} if for every countable set $\{A_n \; | \; n < \omega\} \subseteq \mathcal I^+(\mathcal A)$ there is a $B \in \mathcal I(\mathcal A)$ which intersects each $A_n$ infinitely often\footnote{If $\mathcal A$ is an almost disjoint family, $\mathcal I(\mathcal A)$ is the ideal generated by $\mathcal A$ i.e. the set $\{B \subseteq \omega \; | \; \exists B_0, ..., B_{n-1} \in \mathcal A \, B \subseteq^* \bigcup_{i < n} B_i\}$.}. Clearly tightness implies maximality. See \cite{orderingMAD, restrictedMADfamilies} for more on this notion.

\begin{proof}
The proof of parts one and two are very similar to each other and to their corresponding proofs in the case of the Cohen model. They also all apply the same application of Lemma \ref{iteration} as used in the proof of Theorem \ref{nonmeagerset} so we merely sketch them and leave the details to the interested reader. In essence in each case it is well known that in the Cohen model the first $\aleph_1$-many Cohen reals code a tight MAD family (respectively a splitting family) of size $\aleph_1$ and we show that the $\aleph_1$ Cohen reals added by specializing the first tree play an almost identical role in the special tree model.

1. We continue with the same notation as in Theorem \ref{nonmeagerset}. Work in $V[G_\kappa]$. For each Cohen real $c_\xi$ let $d_\xi \in \baire$ be defined by $d_\xi (l) = k$ if and only if the $l^{\rm th}$ block of $1$'s in $c_\xi$ has length $k$. In other words, if $c_\xi$ starts out as $11101100110$ then $d_\xi(0) = 3$, $d_\xi(1) = 2$, $d_\xi(2) = 0$ and $d_\xi(3) = 2$. It's well known that $d_\xi$ described this way is also Cohen over $V$. This exact coding method is not so important, we just need for each $\xi< \omega_1$ a Cohen generic in $\baire$ coded by $c_\xi$.

Fix an infinite partition of $\omega$ into infinite pieces, $\{A_n \; | \; n < \omega\} \in V$. For each $\xi \in [\omega, \omega_1)$ inductively define $A_\xi$ as follows. First, rearrange $\{A_\zeta \, | \, \zeta < \xi\}$ into an $\omega$ sequence so that each element appears infinitely often, say $\{B_n \; | \; n < \omega\}$. Now, define an infinite $A_\xi = \{a_n \; | \; n <\omega\} \subseteq \omega$ inductively by letting $a_0$ be the $d_\xi(0)^{\rm th}$ element of $\omega\setminus B_0$, $a_{n+1}$ be the $d_\xi(n+1)^{\rm th}$ element of $\omega \setminus (\bigcup_{l < n+1} B_l \cup \{a_0, ..., a_n\})$. Obviously the family $\mathcal A = \{A_\xi \; | \; \xi < \omega_1\}$ forms an almost disjoint family. We have to see that it is tight and hence maximal. Let $\{D_n \; | \; n < \omega\}$ be an infinite sequence of elements of $[\omega]^\omega$ and let $\dot{D}$ be a $\P_\kappa$ name for the subset of $\omega^2$ coding them all. Fix $p \in \P_\kappa$. By Lemma \ref{iteration} we can decide any finite part of $\dot{D}$ by strengthening $p$ in a way that leaves a tail of levels of $p(0)$ unperturbed. In particular, if every $D_n$ is is forced to not be almost covered by some finite set of $A_\xi$'s then we can make a tail of the $A_\xi$'s infinitely often equal to all of them using the same argument more or less as in Theorem \ref{nonmeagerset}.

2. For the case of $\mfs$, the argument is almost the same. For each $\xi < \omega_1$ let $C_\xi \in [\omega]^\omega$ be the infinite set whose characteristic function is $c_\xi$ i.e. $n \in C_\xi$ if and only if $c_\xi(n) = 1$. Applying essentially the same argument as in Theorem \ref{nonmeagerset} to the set $\{C_\xi | \; \xi < \omega_1\}$ shows that this family is in fact a splitting family in the special tree model.
\end{proof}

Finally we show that an argument similar to the corresponding one for the Cohen model \cite[pp.~25-26]{Blass87} gives that $\mathfrak{g} = \aleph_1$ in the special tree model.

\begin{theorem}
$\mathfrak{g} = \aleph_1$ in the special tree model.
\label{thmg}
\end{theorem}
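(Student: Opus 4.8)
The plan is to show $\mathfrak{g} = \aleph_1$ by exhibiting $\aleph_1$-many groupwise dense families whose intersection is empty, and the natural candidates are families coded by the first $\aleph_1$ Cohen reals $\{c_\xi \mid \xi < \omega_1\}$ obtained in Theorem \ref{nonmeagerset}, exactly as Blass does for the genuine Cohen model in \cite[pp.~25--26]{Blass87}. Recall that $\mathfrak{g}$ is the least number of groupwise dense families with empty intersection, so it suffices to produce such a collection of size $\aleph_1$. The key point is that the whole argument only uses the Cohen-genericity of the $c_\xi$ over $V$ together with the crucial preservation property encapsulated in Lemma \ref{iteration}: any countable piece of data about the iteration can be decided by a strengthening that leaves a tail of the levels of $p(0)$ (equivalently $p(1) \in \P(T_0)$) untouched, so that the generic data $c_\xi$ for a tail of $\xi$ can still be steered freely.

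First I would, working in $V[G_\kappa]$, associate to each $\xi < \omega_1$ a strictly increasing function $e_\xi \in \baire$ read off from the Cohen real $c_\xi$ (for instance via the block-length coding $d_\xi$ already used in the proof of Theorem \ref{thmas}(1)), and let $\mathcal{G}_\xi$ be the family of all $X \in [\omega]^\omega$ that are almost contained in $\bigcup_{k \in A}[e_\xi(k), e_\xi(k+1))$ for some infinite $A$ determined in Blass's manner; concretely one takes the groupwise dense family naturally induced by the interval partition coded by $c_\xi$. Each $\mathcal{G}_\xi$ is groupwise dense by the usual Cohen-real argument: downward closure under almost-subsets is built into the definition, and given any strictly increasing $f \in \baire$, a genericity/density argument produces an infinite $A$ with $\bigcup_{k \in A}[f(k),f(k+1)) \in \mathcal{G}_\xi$, because $c_\xi$ meets every dense set deciding how the $f$-blocks align with the $e_\xi$-blocks. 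This is the direct analogue of \cite[pp.~25--26]{Blass87}, and it goes through verbatim since $c_\xi$ is genuinely Cohen over $V$ by Lemma \ref{cohenreals}.

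The substance lies in showing $\bigcap_{\xi < \omega_1} \mathcal{G}_\xi = \emptyset$, i.e. that no single $X \in [\omega]^\omega$ lies in all the $\mathcal{G}_\xi$. Here I would argue by a density computation using Lemma \ref{iteration} precisely as in Theorem \ref{nonmeagerset}: given a name $\dot{X}$ for a candidate infinite set and a condition $p \in \P_\kappa$, the lemma furnishes a $\gamma < \omega_1$ such that for every $n$ some $r \leq p$ decides $\dot{X} \hook n$ while fixing $p(1) \hook {\rm Lev}_\xi(T_0)$ for all $\xi \geq \gamma$. Since for these $\xi$ the partition coded by $c_\xi$ remains unconstrained on a tail, one can extend $r(1)$ on level $\xi$ (adding fresh nodes $t^\xi_n$ exactly as in Lemma \ref{cohenreals}) to force the block structure of $c_\xi$ to be so coarse relative to $\dot{X}$ that $X$ fails to be almost contained in any $e_\xi$-union, hence $X \notin \mathcal{G}_\xi$ for that $\xi$. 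Thus it is dense to force some $\mathcal{G}_\xi$ to exclude $X$, giving empty intersection.

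\textbf{The main obstacle} I anticipate is bookkeeping the direction of the density argument correctly: groupwise density is a positive $\Sigma$-type statement (existence of a thin set in the block union) whereas excluding $X$ from the intersection is the negative statement, and the Cohen-steering in Lemma \ref{cohenreals} must be arranged to defeat \emph{almost-containment} in \emph{every} block-selection, not merely some fixed one. Getting the quantifier order right — that a single tail-of-$\xi$ manipulation suffices to knock $X$ out of $\mathcal{G}_\xi$ for at least one $\xi$ — is the delicate combinatorial point, and it is exactly where Blass's original Cohen-model computation must be transcribed into the present setting via Lemma \ref{iteration}. Since the paper explicitly flags this as following \cite[pp.~25--26]{Blass87}, I expect the remaining steps to be routine once this correspondence is set up, so I would present the groupwise-dense verification briefly and concentrate the written proof on the empty-intersection density argument.
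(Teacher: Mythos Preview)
Your approach diverges from the paper's and, as written, has a genuine gap in the empty-intersection step.

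The paper does \emph{not} build $\aleph_1$ many groupwise dense families directly. Instead it invokes Blass's theorem (\cite[Theorem~2]{Blass87}) that if $\kappa<\mathfrak g$ and $\bigcup_{\alpha<\kappa}X_\alpha$ is almost Turing cofinal then some $X_\alpha$ already is. The paper covers $2^\omega$ by sets $X_\gamma=\{x:\gamma_x<\gamma\}$, where $\gamma_x$ is the least level of $T_0$ not needed to decide $x$, and then derives a contradiction from $\mathfrak g>\aleph_1$: some $X_\alpha$ would be Turing cofinal, yet for $\xi>\alpha$ the real $c_\xi$ cannot equal $\Phi_e^{x\oplus y}$ for any $x,y\in X_\alpha$, since deciding $\Phi_e^{x\oplus y}(k)$ never touches level $\xi$. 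This reduces everything to a single $\Sigma^0_1$-type disagreement, which a finite condition \emph{can} force.

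Your empty-intersection argument, by contrast, asks a finite condition to force ``$X\notin\mathcal G_\xi$''. Under the natural reading of your $\mathcal G_\xi$ (sets almost contained in a union of $e_\xi$-blocks indexed by a coinfinite $A$, equivalently sets missing infinitely many $\Pi_\xi$-blocks), $X\notin\mathcal G_\xi$ says that $X$ meets \emph{cofinitely many} blocks of $\Pi_\xi$. No finite extension of $r(1)$ on level $\xi$ can force this: it determines only finitely many blocks, and any further extension can create a new block short enough to sit inside a gap of $X$. Worse, Lemma~\ref{iteration} pulls in exactly the wrong direction here. For $\xi\geq\gamma$ it tells you that $c_\xi$ can be steered independently of $\dot X$, so $c_\xi$ behaves Cohen-generically relative to $X$; but a Cohen-generic interval partition will have infinitely many blocks that miss $X$ (this is dense for the same reason that hitting $X$ is dense), which puts $X$ \emph{into} $\mathcal G_\xi$, not out of it. So the very freedom you isolate guarantees $X\in\mathcal G_\xi$ for a tail of $\xi$, and you have no handle on the finitely many earlier $\xi$.

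The obstacle you flag in your last paragraph is therefore not a bookkeeping nuisance but the actual failure point. The Turing-cofinality route in the paper is designed precisely to replace this unforceable $\Pi^0_2$ requirement by a computability statement that a single condition can witness.
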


\begin{proof}
Recall that a set $A \subseteq 2^\omega$ is {\em almost Turing Cofinal} if there is an $x \in 2^\omega$ so that every $y \in 2^\omega$ can be computed by some $a \oplus x$ for $a \in A$ where $\oplus$ denotes the Turing join. Blass showed in \cite[Theorem 2]{Blass87} that if $\kappa < \mathfrak{g}$ then if $\bigcup_{\alpha < \kappa} X_\alpha$ is almost Turing cofinal then some $X_\alpha$ is almost Turing cofinal. Therefore it suffices to show that in the special tree model there is a cover of $2^\omega$ in $\aleph_1$ many pieces so that none of them are almost Turing cofinal. Towards this, for any $x \in 2^\omega$ let $\Phi^x_e$ denote the $e^{\rm th}$-Turing program with oracle $x$ relative to some fixed coding. Note that the program instructions are not dependent on $x$ (though of course the outcome may be). 

For each $x \in V[G_\kappa] \cap 2^\omega$ let $\dot{x}$ be a nice $\P_\kappa$-name for it and let for each $n < \omega$ $A_n(\dot{x})$ be a maximal antichain deciding $\dot{x}(\check{n})$. Let $\gamma_x$ be the least countable ordinal $\gamma$ so that $\bigcup_{n < \omega} \bigcup \{p(1) \; | \; p \in A_n(\dot{x})\} \cap {\rm Lev}_{\gamma}(T_0)  = \emptyset$. In words, $\gamma_x$ is the least countable ordinal $\gamma$ so that we can decide any finite amount of $\dot{x}$ without appealing to conditions whose first nontrivial coordinate has a domain intersecting the $\gamma^{\rm th}$-level of $T_0$. Such a $\gamma$ exists for each $x$ by the ccc (see also Lemma \ref{stage0}). For each $\gamma < \omega_1$ let $X_\gamma = \{x \; | \; \gamma_x < \gamma\}$. Observe that if $\gamma_0 < \gamma_1$ then $X_{\gamma_0} \subseteq X_{\gamma_1}$ essentially by definition. Moreover $\bigcup_{\gamma < \omega_1} X_\gamma = 2^\omega$ and therefore the union of the $X_\gamma$'s is, in particular, almost Turing cofinal. If $\mathfrak{g} > \aleph_1$ then there is a $\gamma < \omega_1$ so that $X_\gamma$ is almost Turing cofinal. Note that this implies that actually a tail of $X_\gamma$'s are almost Turing cofinal since the sets are increasing and obviously any superset of an almost Turing cofinal set is almost Turing cofinal. In other words, for a tail of $\gamma$ there is a $y_\gamma \in 2^\omega$ so that $\{x \oplus y_\gamma \; | \; x \in X_\gamma\}$ is cofinal in the Turing degrees. Since $y_\gamma$ itself is in some $X_\alpha \supseteq X_\gamma$ we can conclude that there is an $\alpha < \omega_1$ so that $\{x \oplus y \; | \; x, y \in X_\alpha\}$ is actually Turing cofinal (no almost). But this is absurd since for any $\xi > \alpha$, any $x, y \in X_\alpha$ and any $e, k < \omega$ we can decide $\Phi_e^{x \oplus y}(k)$ without specializing any piece of $T_0$ at level $\xi$ and therefore $c_\xi$ cannot be the output of $\Phi_e^{x \oplus y}$. With this contradiction the proof is complete.
\end{proof}

Before moving to the next section let us note that the proofs of Theorems \ref{nonmeagerset}, \ref{thmas} and \ref{thmg} did not use having a particularly adept book keeping device, nor the regularity of $\kappa$ or even $\GCH$ (these were used to show $\st = \kappa$). Indeed what the proofs of Theorem \ref{nonmeagerset}, \ref{thmas} and \ref{thmg} show is simply that iterating forcing notions of the form $\P(T)$ with finite support (even if ``iteration" just means once) will force the cardinals $\non(\Me)$, $\mfa$, $\mfs$ and $\mathfrak{g}$ to be $\aleph_1$. As such we have actually shown the following.
\begin{theorem}
Let $0 < \alpha$ be an ordinal and let $\langle \P_\gamma, \dot{\Q}_\gamma \; | \; \gamma < \alpha\rangle$ be a finite support iteration so that for all $\gamma < \alpha$ we have $\forces_\gamma$``$\dot{\Q}_\gamma$ is of the form $\P(\dot{T})$ for some tree $\dot{T}$ of height $\omega_1$ with no cofinal branch". If $G\subseteq \P_\alpha$ is $V$-generic then $V[G] \models \non(\Me) = \mfa = \mfs = \mathfrak{g} = \aleph_1$.
\label{aleph1cards}
\end{theorem}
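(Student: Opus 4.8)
The plan is to recognize that the desired conclusion is precisely what the proofs of Theorems \ref{nonmeagerset}, \ref{thmas} and \ref{thmg} already deliver, and that those proofs never used the regularity of $\kappa$, the $\GCH$, or any feature of the bookkeeping; the only work is to check that each ingredient survives the move to an arbitrary finite support iteration $\langle \P_\gamma, \dot\Q_\gamma : \gamma < \alpha\rangle$ of forcings $\P(\dot T)$. Since $0 < \alpha$ and $\P_0$ is the trivial forcing, $\dot\Q_0$ evaluates to $\P(T_0)$ for a genuine ground model generalized Aronszajn tree $T_0$; fix it, an enumeration $\{\alpha_\xi : \xi < \omega_1\}$ of its infinite levels, and the associated reals $c_\xi$ exactly as in the hypothesis of Lemma \ref{cohenreals}. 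Note that Lemma \ref{cohenreals}, although stated for Aronszajn trees, uses nothing about the width of the levels (only that one may extend a finite condition by assigning a chosen parity below finitely many nodes), so it applies to the possibly wide $T_0$ and shows that $\dot\Q_0$ already adds the $\aleph_1$ Cohen reals $\{c_\xi : \xi < \omega_1\}$.

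The one genuinely load-bearing point is that Lemma \ref{iteration} remains valid for the present iteration. Its statement is phrased inside the special tree model, but I would observe that its proof is a bare induction on $\beta$ appealing only to the base case Lemma \ref{stage0}, the factorization $\P_\beta = \P_{\beta_0} * \P(\dot T_{\beta_0})$ at successors, and the standard fact that in a finite support ccc iteration a name for a countable sequence of ordinals is absorbed at a stage of countable cofinality (which reduces the limit case to ${\rm cf}(\beta) = \omega$). The countability of the antichains and of the associated downward-closed sets there comes from the ccc and is insensitive to the width of the intermediate trees. Hence the same induction runs verbatim for $\langle \P_\gamma, \dot\Q_\gamma : \gamma < \alpha\rangle$; taking first parameter $\alpha = 0$ (so that $r(1)$ is the $\P(T_0)$-coordinate) it yields, for every name $\dot x$ for a countable sequence of ordinals, a $\gamma < \omega_1$ such that finite parts of $\dot x$ can be decided while leaving a tail of the $T_0$-levels of the first coordinate unperturbed. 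When ${\rm cf}(\alpha) > \omega$ any such $\dot x$ is already a $\P_\beta$-name for some $\beta < \alpha$ and we apply the lemma at $\beta$; when ${\rm cf}(\alpha) = \omega$ or $\alpha$ is a successor the induction reaches the top directly.

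With this preservation property in hand the three computations are verbatim copies of the earlier arguments. For $\non(\Me) = \aleph_1$ I would repeat the proof of Theorem \ref{nonmeagerset}: given names for an interval partition and target as in the characterization of meagerness in Fact \ref{meagerfact}, use the preservation property to extend any condition so that, on a tail of $\xi$, the untouched levels let one set $c_\xi \hook [\dot f(k), \dot f(k+1))$ equal to the target on a block, defeating every candidate meager set. For $\mfa$ and $\mfs$ I would reproduce Theorem \ref{thmas}, so that the $c_\xi$ code a tight (hence maximal) almost disjoint family and the sets $C_\xi = \{n : c_\xi(n) = 1\}$ code a splitting family. For $\mathfrak g = \aleph_1$ I would reproduce Theorem \ref{thmg}, stratifying $2^\omega = \bigcup_{\gamma < \omega_1} X_\gamma$ by the least $T_0$-level $\gamma_x$ needed to decide $\dot x$ and invoking Blass's theorem together with the fact that for $\xi$ above the relevant level $c_\xi$ is not computable from any $x \oplus y$ with $x, y \in X_\gamma$.

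The main obstacle is therefore concentrated entirely in the bookkeeping-free restatement of Lemma \ref{iteration} and the attendant remark that every name for a countable sequence of ordinals in $V[G]$ is captured at a stage where the induction applies; once this is isolated, no new idea is needed and the four cardinal equalities drop out of the recycled proofs.
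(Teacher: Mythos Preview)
Your proposal is correct and follows exactly the approach the paper takes: the paper does not give a standalone proof of this theorem but instead states it as a direct consequence of the observation that the proofs of Theorems \ref{nonmeagerset}, \ref{thmas} and \ref{thmg} never used the bookkeeping, the regularity of $\kappa$, or $\GCH$, and hence apply to any finite support iteration of $\P(T)$-type forcings. Your additional care in verifying that Lemma \ref{iteration} and Lemma \ref{cohenreals} go through for arbitrary (possibly wide) first tree $T_0$ is appropriate and fills in details the paper leaves implicit.
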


Note that same conclusion holds if we simply add uncountably many mutually generic Cohen reals, again suggesting that the Question 3 may have a positive answer.

\subsection{The Singular Case}
Now we aim to prove the more general version of Main Theorem \ref{mainthm1}. For the rest of this subsection fix cardinals $\aleph_1 \leq \lambda \leq {\rm cf}(\mu) \leq \mu$ with $\lambda$ (and of course $\mu$) of uncountable cofinality. We will show the following, which, when coupled with Fact \ref{kfact1} below, is simply a sharpening of Main Theorem \ref{mainthm1}.

\begin{theorem}
Assume $\lambda^\omega = 2^\omega = \lambda$ and there is a neat, stationary $(\omega_1, \lambda)$-semimorass (see below for a definition). There is a finite support iteration of the form $\langle \P_\gamma, \dot{\Q}_\gamma \; | \; \gamma < \mu\rangle$ so that for all $\gamma < \mu$ we have $\forces_\gamma$``$\dot{\Q}_\gamma$ is of the form $\P(\dot{T})$ for some tree $\dot{T}$ of height $\omega_1$ with no cofinal branch" and $\P_{\mu}$ forces that $\non(\Me) = \mfa = \mfs = \mathfrak{g} = \aleph_1 \leq \st = \lambda \leq \cc = \mu$.
\label{singular}
\end{theorem}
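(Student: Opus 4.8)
The plan is to reduce Theorem \ref{singular} to the machinery already developed for the regular case, separating the computation of the ``small'' cardinals from the computation of $\st$ and $\cc$. The crucial observation is that Theorem \ref{aleph1cards} already does essentially all the work for the upper group of equalities: since the iteration $\langle \P_\gamma, \dot{\Q}_\gamma \mid \gamma < \mu\rangle$ is a finite support iteration in which every iterand is forced to be of the form $\P(\dot T)$ for a tree of height $\omega_1$ with no cofinal branch, Theorem \ref{aleph1cards} applies verbatim to give $V[G_\mu] \models \non(\Me) = \mfa = \mfs = \mathfrak{g} = \aleph_1$. Here it is essential that (as emphasized in the remark just before Theorem \ref{aleph1cards}) the proofs of Theorems \ref{nonmeagerset}, \ref{thmas} and \ref{thmg} used neither the regularity of the length nor $\GCH$; they rely only on the shape of the iterands and on Lemma \ref{iteration}, whose inductive proof makes no appeal to the cofinality of $\beta$ beyond the trivial fact that countable sets of ordinals are added only at stages of countable cofinality. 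So the first step is simply to invoke Theorem \ref{aleph1cards}, and the real content lies entirely in arranging the bookkeeping so that $\st = \lambda$ while $\cc = \mu$.

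For the value of the continuum, the standard finite-support-iteration bookkeeping gives $\cc = \mu$ provided the iteration has length $\mu$ and each iterand has size ${<}\mu$ (ensured since each $\P(\dot T)$ has size at most the size of the tree, and trees of relevant size lie below $\mu$), using $\lambda^\omega = \lambda$ and $\mathrm{cf}(\mu) > \omega$ to control cardinal arithmetic along the iteration. The subtle point is forcing $\st = \lambda$ with $\lambda$ possibly singular and strictly below $\mu$; one cannot simply appeal to the regular-case argument of Lemma \ref{easy}, which used that every tree of size ${<}\kappa$ appears at a bounded stage. This is precisely where the hypothesis of a \emph{neat, stationary $(\omega_1,\lambda)$-semimorass} enters, and where the construction of \cite[Theorems 46 \& 47]{Morass} must be imported: the semimorass provides a scaffolding on which to build, in the final model, a non-special generalized Aronszajn tree of size exactly $\lambda$ (giving $\st \le \lambda$), while simultaneously the bookkeeping must enumerate and specialize every generalized Aronszajn tree of size ${<}\lambda$ that appears along the way (giving $\st \ge \lambda$). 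The upper bound $\st \le \lambda$ and lower bound $\st \ge \lambda$ together pin $\st = \lambda$.

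Concretely I would proceed as follows. First, fix the neat stationary $(\omega_1,\lambda)$-semimorass and, following \cite{Morass}, use it to define a generalized Aronszajn tree $T^*$ of cardinality $\lambda$ which will be preserved as non-special throughout the iteration; the semimorass structure is what guarantees both that $|T^*| = \lambda$ and that no finite-support iteration of the prescribed form can specialize it (morally because specialization would have to happen at a bounded stage, contradicting the morass's coherence/stationarity). Second, set up the bookkeeping so that at each stage $\dot{\Q}_\gamma = \P(\dot T_\gamma)$ where $\dot T_\gamma$ enumerates, as $\gamma$ ranges over $\mu$, all $\P_\gamma$-names for generalized Aronszajn trees of size ${<}\lambda$; since each such tree of size ${<}\lambda$, having ${<}\lambda \le \mathrm{cf}(\mu)$ antichains of reals, appears at some stage ${<}\mu$ and is then caught and specialized by a later iterand, we obtain $\st \ge \lambda$. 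Third, invoke Theorem \ref{aleph1cards} for the four small cardinals and the standard computation for $\cc = \mu$ and $\cov(\Me) = \mu$ (the latter by the same finite-support Cohen-real argument underlying Lemma \ref{easy}). I expect the main obstacle to be the second step combined with the preservation of $T^*$: one must verify that the semimorass-based tree remains non-special after the full length-$\mu$ iteration even though $\lambda$ may be singular, which is exactly the technical heart of \cite[Theorem 47]{Morass} and requires checking that the specializing forcings $\P(\dot T_\gamma)$ for the \emph{small} trees never inadvertently specialize $T^*$ — a point that the neatness and stationarity of the semimorass are designed to secure.
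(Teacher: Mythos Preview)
Your proposal is correct and follows essentially the same approach as the paper's proof: invoke Theorem \ref{aleph1cards} for the four small cardinals, use the semimorass tree $T(\mathscr F)$ (your $T^*$) together with Koszmider's preservation result (Fact \ref{kfact2}, i.e.\ \cite[Theorem 46]{Morass}) for $\st \le \lambda$, use bookkeeping of all trees of size ${<}\lambda$ along a length-$\mu$ iteration for $\st \ge \lambda$, and use standard finite-support arithmetic plus the Cohen reals added at limit stages for $\cc = \cov(\Me) = \mu$. The only minor calibration is that Fact \ref{kfact2} as stated in the paper needs only stationarity of the semimorass (neatness is used in the existence result, Fact \ref{kfact1}), but this does not affect the substance of your argument.
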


For the needlessly curious reader we give the definition of a neat, stationary $(\omega_1, \lambda)$-semimorass below, however we will not need it and rather, the proof of Theorem \ref{singular} merely blackboxes some relevant facts from \cite{Morass} due to Koszmider.

\begin{definition}[Neat, Stationary $(\omega_1, \lambda)$-Semimorass, See Definition 1 of \cite{Morass}]
An $(\omega_1, \lambda)$-{\em semimorass} is a family $\mathscr F \subseteq [\lambda]^{\leq\omega}$ satisfying the following conditions.
\begin{enumerate}
\item
$\mathscr F$ is well-founded with respect to inclusion. Let $rank:\mathscr F \to \mathsf{ORD}$ be the corresponding rank function.
\item
For all $X \in \mathscr F$ the set $\mathscr F \hook X : = \{Y \in \mathscr F \; | \; y \subseteq X\}$ is countable.
\item
If $X, Y \in \mathscr F$ have the same rank then they have the same order type (as subsets of $\lambda$) and $\mathscr F \hook Y = \{f_{X, Y}''Z \; | \; Z \in \mathscr F \hook \}$ where $f_{X, Y}:X \to Y$ denotes the unique order isomorphism between $X$ and $Y$.
\item
$\mathscr F$ is directed, i.e. for all $X, Y \in\mathscr F$ there is a $X \in \mathscr F$ so that $X, Y \subseteq Z$.
\item
For each $X$ either $\mathscr F \hook X$ is directed or there are $X_1, X_2 \in \mathscr F$ of the same rank so that $X = X_1 \cup X_2$, $f_{X_1, X_2} \hook X_1 \cap X_2$ is the identity on $X_1 \cap X_2$ and $\mathscr F \hook X = \mathscr F\hook X_1 \cup \mathscr F \hook X_2 \cup \{X_1, X_2\}$.
\item
$\bigcup \mathscr F = \lambda$.
\end{enumerate}
If $\mathscr F$ is an $(\omega_1, \lambda)$-semimorass then we say that $\mathscr F$ is {\em neat} if for every $X \in \mathscr F$ either $rank(X) = 0$ or $X =\bigcup(\mathscr F \hook X)$. Finally we say that such a $\mathscr F$ is {\em stationary} if it is stationary as a subset of $[\lambda]^{\leq \omega}$ i.e. it intersects every $\subseteq$-closed $\subseteq$-unbounded family of countable subsets of $\lambda$.
\end{definition}

We need the following two facts.
\begin{fact}[See Theorem 3 of \cite{Morass}]
If $\GCH$ holds there is a ccc forcing notion $\P$ forcing $\lambda^\omega = 2^\omega = \lambda$ and ``there is a neat, stationary $(\omega_1, \lambda)$-semimorass".
\label{kfact1}
\end{fact}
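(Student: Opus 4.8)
The plan is to realize $\P$ as a product $\mathbb M \times \mathrm{Add}(\omega,\lambda)$, where $\mathrm{Add}(\omega,\lambda)$ is the usual Cohen forcing adding $\lambda$ reals and $\mathbb M$ is a forcing that adds the semimorass by \emph{finite} approximations. Since $\mathrm{Add}(\omega,\lambda)$ is Knaster and (as I will argue) $\mathbb M$ is ccc, the product is ccc, so no cardinals or cofinalities are collapsed and in particular $\mathrm{cf}(\lambda)$ remains uncountable. The Cohen factor guarantees $2^\omega \geq \lambda$ in the extension, so the genuine work is (i) to define $\mathbb M$ and verify the generic object is a neat, stationary $(\omega_1,\lambda)$-semimorass, and (ii) to pin down the arithmetic $\lambda^\omega = 2^\omega = \lambda$.

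A condition $p \in \mathbb M$ specifies a finite set $I_p$ of indices together with, for each $i \in I_p$, a finite set $x^p_i \in [\lambda]^{<\omega}$ (a finite approximation to the intended member $X_i \in \mathscr F$), a finite partial rank function, and finite partial order-isomorphisms $f^p_{i,j}$ between those $x^p_i$ of equal rank, all required to satisfy the evident finite analogues of conditions (1)--(6): well-foundedness of the finite rank structure, equal rank forcing equal order type together with coherent isomorphisms, finite directedness, and the splitting clause (5). A condition $q$ extends $p$ if $I_q \supseteq I_p$, each $x^q_i \supseteq x^p_i$, and all rank and isomorphism data are extended coherently, subject to an end-extension requirement that ordinals newly added to a given $x_i$ lie above everything already committed below it. This last clause is what forces, generically, each member to be the union of its predecessors, yielding neatness; combined with the finite branching of the rank structure it is also what keeps $\mathscr F \hook X$ countable, which by induction on rank keeps each member countable, i.e.\ it delivers axiom (2). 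The generic semimorass is $\mathscr F = \{X_i : i \in I\}$ with $X_i = \bigcup\{x^p_i : p \in G\}$, and routine density arguments establish (1)--(6) and $\bigcup\mathscr F = \lambda$.

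The main obstacle is the ccc of $\mathbb M$, which I would prove by the standard $\Delta$-system-plus-isomorphism argument. Given uncountably many conditions, first apply the $\Delta$-system lemma (using $\lambda^{<\omega}=\lambda$, so the conditions are finite objects over $\lambda$) to obtain an uncountable subfamily whose underlying finite sets of ordinals form a $\Delta$-system with root $r$; then thin further so that all conditions are isomorphic over $r$, i.e.\ they agree on the rank pattern, on the isomorphism data, and on the order type of each ordinal relative to $r$. The delicate point is the amalgamation: given two conditions $p,q$ from the thinned family, one must build a common extension by merging $I_p \cup I_q$ and the sets $x^p_i \cup x^q_j$ while respecting the equal-rank/equal-order-type clause and, above all, the splitting clause (5), which is where the intended directedness interacts with the finite data. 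Isomorphism over the root is exactly what lets one extend the maps $f_{i,j}$ coherently on the merged sets. I expect this amalgamation lemma --- essentially that the finite approximations are amalgamation-closed over an isomorphic root --- to be the technical heart, and it is precisely where Koszmider's semimorass is weak enough (compared with a full simplified morass) for finite conditions to suffice. The same argument in fact yields that $\mathbb M$, and hence all of $\P$, is Knaster.

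Finally, for the arithmetic: $\P$ has size $\lambda$, and since it is ccc and $\lambda^\omega = \lambda$ holds in $V$ by $\GCH$ together with $\mathrm{cf}(\lambda)>\omega$, a nice-name count gives at most $\lambda^\omega = \lambda$ names for subsets of $\omega$ and at most $\lambda^\omega=\lambda$ names for functions $\omega \to \lambda$; hence $2^\omega \leq \lambda$ and $\lambda^\omega \leq \lambda$ in $V[G]$, while the reverse inequalities are immediate (the Cohen factor gives $2^\omega \geq \lambda$ and $\lambda^\omega \geq 2^\omega$), so $\lambda^\omega = 2^\omega = \lambda$. Stationarity of $\mathscr F$ follows from a genericity argument: given a $\P$-name $\dot C$ for a $\subseteq$-closed, $\subseteq$-unbounded family of countable subsets of $\lambda$ and a condition $p$, one descends through conditions in an $\omega$-step fusion to build a countable $X \subseteq \lambda$ that is forced into $\dot C$ and is simultaneously realized as a member of $\mathscr F$ at a fresh index, showing it is dense to place a member of the generic family into $\dot C$; neatness, as noted, is built directly into the extension relation.
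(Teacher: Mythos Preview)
The paper does not prove this statement at all: it is stated as a \emph{Fact} with a citation to \cite[Theorem 3]{Morass} and is used as a black box in the proof of Theorem \ref{singular}. There is therefore no proof in the paper to compare your proposal against.

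That said, your sketch is a reasonable outline of how such a result is typically established --- adding the semimorass by finite approximations with a $\Delta$-system amalgamation argument for the ccc, and a nice-name count for the arithmetic --- and is in the spirit of Koszmider's construction. A few cautions are in order. First, your description of the conditions is vague at exactly the points that matter: the precise extension relation (especially the ``end-extension'' clause) and the finite version of the splitting clause (5) are where the real work lies, and you acknowledge as much without actually carrying it out. Second, the stationarity argument you sketch (an $\omega$-step fusion placing a generic member into a named club) is the right shape, but one must be careful that the forcing adds reals and hence new clubs in $[\lambda]^{\leq\omega}$; the argument has to catch names for clubs, not just ground-model clubs, and this interacts with the ccc in a way that needs to be spelled out. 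Third, the Cohen factor is not obviously necessary: a ccc poset of size $\lambda$ built from finite conditions over $\lambda$ will typically already add $\lambda$ Cohen reals, so you may be able to dispense with the product. None of these are fatal to your strategy, but as written your proposal is an outline rather than a proof, and the paper itself simply defers to Koszmider for the details.
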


Given a stationary $(\omega_1, \lambda)$-semimorass $\mathscr F$ denote by $T(\mathscr F)$ the set 
\begin{center}
$\{a \subseteq\mathscr F \; |$ $a$ is a continuous, well-ordered by inclusion chain and $\bigcup a = a\}$ 
\end{center}
(see \cite[Definition 36]{Morass}). Ordered by end extension $T(\mathscr F)$ is a non-special generalized Aronszajn tree by \cite[Fact 39]{Morass}. Note that if $\lambda^\omega = 2^\omega = \lambda$ then $|T(\mathscr F)| = \lambda$. 

\begin{fact}[See Theorem 46 of \cite{Morass}]
Suppose $\lambda^\omega = \cc = \lambda$ and $\mathscr F$ is a stationary $(\omega_1, \lambda)$-semimorass. Let $\langle \P_\gamma, \dot{\Q}_\gamma \; | \; \gamma < \alpha\rangle$ be a finite support iteration so that for all $\gamma < \alpha$ we have $\forces_\gamma$``$\dot{\Q}_\gamma$ is of the form $\P(\dot{T})$ for some tree $\dot{T}$ of height $\omega_1$ and cardinality ${<}\lambda$ with no cofinal branch". Then $\P_\alpha$ forces that $\check{T}(\check{\mathscr F})$ is not special.
\label{kfact2}
\end{fact}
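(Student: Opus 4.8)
The plan is to show that $\P_\alpha$ adds no specializing function for the \emph{ground model} tree $T(\mathscr F)$, by reducing this to an $\omega_1$-preservation statement. Recall (Section 2) that if a tree $U$ of height $\omega_1$ is special, then forcing with $U$ as a poset (conditions being nodes, a stronger condition lying higher in the tree) collapses $\omega_1$, since the generic cofinal branch has order type $\omega_1$ and a specializing function injects it into $\omega$. Hence it suffices to prove that $\P_\alpha$ forces ``forcing with $T(\mathscr F)$ preserves $\omega_1$'', equivalently that the two-step poset $\P_\alpha * \check T(\check{\mathscr F})$ preserves $\omega_1$. I would therefore devote the proof to establishing that, in $V[G_\alpha]$, the ground model poset $T(\mathscr F)$ is proper.

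First I would observe that $\mathscr F$ remains a neat, stationary $(\omega_1,\lambda)$-semimorass in $V[G_\alpha]$. The structural axioms (well-foundedness, countability of $\mathscr F\hook X$, the rank/isomorphism and splitting clauses, neatness) are first-order properties of the fixed family $\mathscr F$ and its rank function, hence absolute. The only clause that can fail is stationarity of $\mathscr F$ as a subset of $[\lambda]^\omega$. But $\P_\alpha$ is a finite support iteration of the Baumgartner--Malitz--Reinhardt forcings $\P(\dot T_\gamma)$, each ccc, so $\P_\alpha$ is ccc and in particular proper, and proper forcing preserves stationary subsets of $[\lambda]^\omega$. Thus $\mathscr F$ stays stationary.

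Next, working in $V[G_\alpha]$, I would prove that $T(\mathscr F)$ is proper by the standard generic-condition argument driven by the semimorass. Fix a condition $t$ and a large $H_\theta$; by stationarity of $\mathscr F$ there are stationarily many countable $M\prec H_\theta$ with $t,\mathscr F, T(\mathscr F)\in M$ and $M\cap\lambda\in\mathscr F$. For such an $M$ the set $\mathscr F\hook(M\cap\lambda)$ is countable (semimorass axiom) and contained in $M$, and by neatness it is exactly the continuous chain below the node determined by $M\cap\lambda$. Enumerating the dense subsets of $T(\mathscr F)$ lying in $M$ and meeting them inside $M$ produces an end-extension-increasing sequence of nodes whose union is that node, which is therefore a genuine condition of the ground model tree and is $(M,T(\mathscr F))$-generic. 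Hence $T(\mathscr F)$ is proper and preserves $\omega_1$, completing the reduction.

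The crux, and where the hypothesis $|\dot T_\gamma|<\lambda$ must enter, is the interaction between the ground model tree and the iteration: $\P_\alpha$ adds new countable sequences and hence potentially new nodes to $T(\mathscr F)$, so one must be careful that the object being preserved is the check tree $\check T(\check{\mathscr F})$ rather than its (possibly larger) reinterpretation in $V[G_\alpha]$. I expect the size restriction to be used exactly here: since each iterand specializes a tree of size $<\lambda$ and $\lambda^\omega=\cc=\lambda$, any $\P_\alpha$-name $\dot f$ for a putative specializing function of the ground model tree has all of its deciding antichains captured, by the ccc, inside a substructure of size $<\lambda$, so a hypothetical $\dot f$ reflects to a bounded portion of the iteration and cannot coherently color a stationary chain of nodes of $T(\mathscr F)$ --- which contradicts the non-specialness recorded in \cite[Fact 39]{Morass}. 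Making this reflection precise, and verifying that the generic condition built in the properness argument is a node of the ground model tree (and not merely of its extension), is the main obstacle; the remainder is bookkeeping with the semimorass axioms.
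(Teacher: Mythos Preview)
The paper does not prove this statement; it is recorded as a \emph{Fact} with a citation to Koszmider \cite{Morass} (Theorem~46 there) and is used as a blackbox in the proof of Theorem~\ref{singular}. There is therefore no paper-side proof to compare your proposal against.

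As for the proposal on its own terms: the reduction to $\omega_1$-preservation of $\check T(\check{\mathscr F})$ in $V[G_\alpha]$ is correct, and the observation that ccc (hence proper) forcing preserves the stationarity of $\mathscr F\subseteq[\lambda]^{\leq\omega}$ is fine. However, the heart of the matter is exactly the step you yourself flag as ``the main obstacle'' and do not resolve. Your properness sketch in $V[G_\alpha]$ builds, for a suitable $M\prec H_\theta$ with $M\cap\lambda\in\mathscr F$, an increasing chain of nodes of $T(\mathscr F)$ meeting the dense sets in $M$; but the limit of that chain must be a node of the \emph{ground model} tree $\check T(\check{\mathscr F})$, not merely of $T(\mathscr F)^{V[G_\alpha]}$. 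Since the dense sets you enumerate live in $M\subseteq V[G_\alpha]$ and the chain you produce is a new countable sequence, nothing in your argument guarantees this. Equally, the hypothesis that each iterand specializes a tree of cardinality $<\lambda$ is never actually used: your final paragraph gestures at a reflection of a name $\dot f$ into a size-${<}\lambda$ substructure, but this is speculation, not an argument. That size bound is the crux of Koszmider's theorem, and any genuine proof must exploit it in a concrete combinatorial way (via the semimorass isomorphisms between elements of the same rank). As written, your proposal is a reasonable plan with the central step missing.
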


The proof of Theorem \ref{singular}, and hence Main Theorem \ref{mainthm1} is now more or less immediate.
\begin{proof}[Proof of Theorem \ref{singular}]
Assume $\lambda^\omega = 2^\omega = \lambda$ and there is a neat, stationary $(\omega_1, \lambda)$-semimorass $\mathscr F$. By Fact \ref{kfact1} this is possible and hence the assumption (and the Theorem) are not vapid. Via a book keeping device force with an iteration of the form $\langle \P_\gamma, \dot{\Q}_\gamma \; | \; \gamma < \mu\rangle$ so that for all $\gamma < \mu$ we have $\forces_\gamma$``$\dot{\Q}_\gamma$ is of the form $\P(\dot{T})$ for some tree $\dot{T}$ of height $\omega_1$, cardinality ${<}\lambda$ with no cofinal branch". Fact \ref{kfact2} implies that $|(T(\mathscr F))^V| = \lambda$ is non special so $\st \leq \lambda$ while Theorem \ref{aleph1cards} implies that $\non(\Me) = \mfa = \mfs = \mathfrak{g} = \aleph_1$. Since the iteration is a finite support iteration of ccc forcing notions of length $\mu$ and every iterand is forced to have size ${<}\mu$ (and $\mu^{<\mu} = \mu$) we easily get via a nice-name argument that $2^\omega = \mu$ and, moreover, that $\mu$-many mutually generic Cohen reals were added so $\cov(\Me) = \mu$ as well. The only thing left to check is that $\lambda \leq \st$ i.e. that we can build a book keeping device good enough to catch all possible trees of size ${<} \lambda$. For this it is enough to show that there are only at most $\mu$ many nice names for generalized Aronszajn trees of size ${<}\lambda$. This follows from the fact that $\lambda \leq {\rm cf}(\mu)$. See the proof of \cite[Theorem 47]{Morass} for more details on this part of the argument.
\end{proof}

The one case left open by this theorem is the possibility of ${\rm cf}(\cc) < \st$. In the proof above, this is not possible a priori to arrange since for any $\xi \geq cf(\cc)$ we have $\mu^\xi > \mu$ many nice names for subsets of $\xi$ and hence there may very well be too many trees of size $\xi$ to be able to specialize them all by a ccc forcing of size $\mu$. At the same time, forcing with a larger forcing notion however may add $\mu^+$ many reals. We do not know whether these technical issues can be avoided though it is consistent that ${\rm cf}(\cc) < \st$ as witnessed by Laver's Theorem from \cite{Laver1987}, proved as Theorem \ref{Laver} above. However it is unclear how the inequality ${\rm cf}(\cc) < \st$ affects other cardinal characteristics since it is not clear how to iterate to obtain it. As such the following seems deceptively interesting.

\begin{question}
What consequences does the inequality ${\rm cf}(\cc) < \st$ have on other cardinal characteristics and on trees? For instance does ${\rm cf}(\cc) < \st$ imply that $\st$ is regular?
\label{Qcof}
\end{question}

%Note that $\mathfrak{g}$ is provably below $cf(\cc)$, see \cite[Corollary 8.7]{BlassHB}, so a negative answer to this question would not be unprecedented.

\section{Conclusion and Open Questions}
The (not even so) observant reader will notice that Main Theorem \ref{mainthm1} actually shows that the cardinal characteristics of the Special Tree model are exactly the same as in the Cohen model (other than $\st$ of course, which is $\aleph_1$ since there is always a Souslin tree in the Cohen model). This gives credence to the conjecture in Section 3 that every new real in a $\P(T)$-extension is already in a Cohen subextension. Of course these models are very different combinatorially. Indeed it's well know that in the Cohen Model there are Souslin trees and in particular $\st = \aleph_1$. More interestingly recall that the combinatorial principle $\diamondsuit ({\rm non}(\Me))$ from \cite{DHM04} holds\footnote{The precise definition of $\diamondsuit({\rm non}(\Me))$ is somewhat technical and since we do not need it we omit it here. For the reader who does not know about parametrized diamond principles we simply note that $\diamondsuit({\rm non}(\Me))$ strengthens the statement ${\rm non}(\Me) = \aleph_1$ in a similar way to how $\diamondsuit_{\omega_1}$ strengthens $\CH$.} in the Cohen model and indeed in every iterated model of $\non(\Me) = \aleph_1$ with ``definable" iterands (see \cite[Theorem 6.6]{DHM04} for a more precise statement of this result). However $\diamondsuit({\rm non}(\Me))$ implies there is a Souslin tree (\cite[Theorem 4.7]{DHM04}), and hence must fail in the special tree model. In our view the special tree model is a relatively natural model witnessing ${\rm non}(\Me) = \aleph_1 \land \neg \diamondsuit({\rm non}(\Me))$. We ask how else the Cohen and special tree models can be separated.

\begin{question}
What other well studied principles distinguish the Cohen model and the special tree model? 
\end{question}

On a more basic level, the results above do not address how $\st$ relates to the right hand side of the Cicho\'{n} diagram. However, this is partially addressed by Laver's Theorem \ref{Laver} above (in its slightly modified form). In that model $\mfd = \st$ while in the special tree model (or any length) we have $\st \leq {\rm cov}(\Me)$ and hence $\st \leq \mfd$. The following is therefore open. 

\begin{question}
Does $\ZFC$ prove $\st \leq \mfd$?
\end{question}

We conjecture this is the case. What this would amount to in practice is that there is no way to specialize all generalized Aronszajn trees of size $\aleph_1$ in an iterable way without adding unbounded reals. This question is interesting in its own right.

\begin{question}
Under which conditions on a generalized Aronszajn tree $T$ is there a proper forcing notion $\Q_T$ which specializes $T$ without adding unbounded reals? Cohen reals? What if we drop ``proper"?
\end{question}

If we replace ``not adding unbounded (or Cohen) reals" with ``not adding reals at all" then more is known about this question. Indeed Jensen, under $V=L$ and, Shelah, with no additional hypothesis, have shown that for any (thin) Aronszajn tree $T$ there is a proper forcing notion specializing $T$ without adding reals and this is iterable, see \cite[Chapter V, Theorem 6.1]{PIP}. As a result they obtain models of $\CH \land \neg \diamondsuit$. In \cite{Switz20} the author investigated when ``Aronszajn tree" could be replaced by ``generalized Aronszjan tree of cardinality $\omega_1$" and gave a sufficient combinatorial condition on trees for modifying Shelah's aforementioned forcing to the wide case. However we cannot hope to find for any generalized Aronszajn tree of size $\aleph_1$ a stationary set preserving forcing notion specializing it without adding reals under $\CH$ since the tree $T(S)$ discussed in Proposition \ref{T(S)} cannot be made special without either adding reals or killing the stationarity of $S$.

\begin{question}
Are there (consistently) generalized Aronszajn trees $T$ so that in any forcing extension in which $T$ is special there is a Cohen real over the ground model?
\end{question}

\end{document}